\renewcommand{\star}{\circledast}
\newcommand{\eps}{\varepsilon}
\newcommand{\prt}{\partial}
\newcommand{\real}{\mathds{R}}
\newcommand{\rn}{{\mathds{R}^n}}
\newcommand{\Ee}{\mathds E}
\newcommand{\Pp}{\mathds P}
\newcommand{\I}{\mathds 1}
\def\1{1\!\!\hbox{{\rm I}}}
\newcommand{\be}{\begin{equation}}
\newcommand{\ee}{\end{equation}}
\newcommand{\ba}{\begin{aligned}}
\newcommand{\ea}{\end{aligned}}
\numberwithin{equation}{section}
\theoremstyle{plain}
\newtheorem{theorem}{Theorem}
\newtheorem{lemma}[theorem]{Lemma}
\newtheorem{proposition}[theorem]{Proposition}
\theoremstyle{definition}
\newtheorem{remark}[theorem]{Remark}
\newtheorem{definition}{Definition}
\newtheorem{example}[theorem]{Example}
\numberwithin{theorem}{section}
\numberwithin{definition}{section}
\begin{document}

\title{Parametrix construction for certain L\'evy-type processes}

\author{%
    \textsc{Victoria Knopova}%
    \thanks{V.M.\ Glushkov Institute of Cybernetics,
            NAS of Ukraine,
            40, Acad.\ Glushkov Ave.,
            03187, Kiev, Ukraine,
            \texttt{vic\underline{ }knopova@gmx.de}}
    \textrm{\ \ and\ \ }
    \stepcounter{footnote}\stepcounter{footnote}\stepcounter{footnote}
    \stepcounter{footnote}\stepcounter{footnote}%
    \textsc{Alexei Kulik}%
    \thanks{Institute of Mathematics, NAS of Ukraine, 3, Tereshchenkivska str., 01601  Kiev, Ukraine,
    \texttt{kulik@imath.kiev.ua}}
    }

\date{}

\maketitle

\begin{abstract}
    \noindent
    In this paper we show that a non-local operator of certain type extends to the generator of a strong Markov process, admitting  the transition probability density.  For this transition probability density we construct the intrinsic upper and lower bounds, and prove some smoothness properties.
 Some examples are provided.

    \medskip\noindent
    \emph{Keywords:} transition probability density, transition density estimates, L\'evy-type processes, pseudo-differential operator, Levi's parametrix  method.

    \medskip\noindent
    \emph{MSC 2010:} Primary: 60J35. Secondary: 60J75, 35S05, 35S10, 47G30.
\end{abstract}

\numberwithin{equation}{section}

\section{Introduction}

Consider the equation:
 \begin{equation}
    \begin{split}
    \tfrac{\partial}{\partial t}f(t,x) =L(x,D)f(t,x), \quad t>0, \quad x\in \real, \label{fund1}
    \end{split}
    \end{equation}
where  the operator  $L(x,D)$ is defined on functions from the space $C_\infty^2(\real)$ of twice continuously differentiable functions  vanishing at infinity as
 \begin{equation}
L(x,D)f(x):=\int_\real \big(f(x+u)-f(x)\big)\mu(x,du), \label{lxd0}
\end{equation}
where the kernel $\mu(x,du)$ is symmetric with respect to  $u$ for each fixed $x\in \real$, and
$$
\int_\real (1\wedge u^2)\mu(x,du)<\infty.
$$
We refer to  \cite{Ja01}--\cite{Ja05}  for the detailed survey on  operators of such a type.

In this paper we   develop a version of the parametrix method, which allows to construct the \emph{candidate } $p_t(x,y)$ for being the fundamental solution to \eqref{fund1}, and
provide the \emph{justification procedure}, which shows that $(L(x,D), C_\infty^2(\real))$ extends to the  $C_\infty(\real)$ --generator of a (strong) Feller seimgroup

\begin{equation}\label{t-semi}
T_t f(x)= \int_\real f(y) p_t(x,y)dy, \quad t>0, \quad x\in \real,
\end{equation}
which in turn is in one-to-one correspondence with a Markov process, whose transition probability density  is $p_t(x,y)$.

Although our parametrix construction relies on the scheme described in \cite{Fr64},  see also \cite{Le07} for the original paper,  in our case the situation is much more complicated.
The first difficulty  is that even if we deal with the generator of a L\'evy process, unlikely to the diffusion or to the symmetric $\alpha$-stable case,  in general   one cannot expect    the  fundamental solution to the respective Cauchy problem to possess the
single-kernel estimate
\begin{equation}\label{bell}
g_t(x,y)\leq C \rho_t f(\rho_t (y-x)),
\end{equation}
unless $\mu(du)$ satisfies addition regularity assumptions; here $\rho: (0,1]\to (0,\infty)$ is some "scaling function", and $f\in L_1(\real)$,  see \cite{KK12a}.
 Instead, we have the upper estimate in the form of the convolution of a single kernel and a finite measure, see \eqref{ptx-der}.  This requires the deep modification of the parametrix method, presented in \cite{Fr64}.

For the justification procedure  we use the method, described in \cite{KK14a}, and further developed in \cite{KK14b}.  This  method, after necessary modifications, works as well in the cases treated in this paper, and relies on the \emph{approximative fundamental solution}  to \eqref{fund1}, in particular, on its differentiability properties.

We assume that
\begin{equation}
L(x,D)= L(D)+\mathfrak{L}(x,D), \label{expl1}
\end{equation}
where
\begin{equation}\label{ld}
L(D) f(x):= \int_\real (f(x+u)-f(x))\mu(du),\quad f\in C_\infty^2(\real),
\end{equation}
and   the operator $\mathfrak{L}(x,D)$ is some lower order perturbation of $L(D)$,  i.e.,
\begin{equation}
\mathfrak{L}(x,D)f(x):=\int_{\real} \big(f(x+u)-f(x)\big) m(x,u)\mu(du), \quad f\in C_\infty^2(\real),\label{Qxd}
\end{equation}
were the function $m(x,u)$ is non-negative and bounded from above by $c(1\wedge |u|^\epsilon)$, $c,\epsilon>0$.
Here  $\mu$ is a  L\'evy measure, i.e.
$\int_\real (1\wedge u^2) \mu(du)<\infty$;
in addition we assume $\mu$ to be symmetric.
 We emphasize that the list of our assumptions on $\mu$ and $m(x,u)$ contains neither
 the conditions  on smoothness of the L\'evy measure $\mu$, corresponding to $L(D)$,  nor
   the  condition that $\mu(du)$ is comparable to an $\alpha$-stable L\'evy measure $c_\alpha |u|^{-1-\alpha}du$. In particular, in Section~\ref{example}
 we provide  a) an example in which  for a discrete L\'evy measure $\mu$ one can construct the fundamental
 solution to \eqref{fund1}, and write the estimates for it in a rather compact form; b) an example in which the
 characteristic exponent $q(\xi)$  related to $\mu$ by
 $$
 q(\xi)= \int_\real (1-\cos (\xi u))\mu(du)
 $$
 has oscillations, but still our method  is applicable.

 The paper is organized as follows.  In Section~\ref{settings} we outline our method and present the main results. 
 Proofs are given in Section~\ref{constr}, \ref{cont} and \ref{just}.   In Section~\ref{example}
 we consider several examples which illustrate our results.

\subsection{Notation}

 We write $f\asymp g$ if for some positive constants $c_1$, $c_2$   we have $c_1g(x)\leq f(x)\leq c_2 g(x)$; $f\wedge g:= \min(f,g)$, $f\vee g:= \max(f,g)$; $f_+(x):= f(x)\vee 0$. Through the paper we denote by $c_i$   arbitrary positive constants. The symbols  $*$  and $\star$ denote, respectively, the convolutions
 $$
f* g(t,x,y):= \int_\real f(t,x,z)g(t,z,y)dz, \quad f\star g (t,x,y):= \int_0^t \int_\real f(t-\tau,x,z)g(\tau, z,y)dzd\tau.
 $$
Analogous notation is used  for the convolution of measures:
$$
(F* G)_t (du)= \int_\real F_t(du-z) G_t(dz), \quad (F\star G)_t(du)=\int_0^t \int_\real
F_{t-s}(du-z) G_s(dz)ds.
$$

\noindent We denote by $B_b(\real)$, $C_\infty(\real)$,
$C_b^k(\real)$, $C^\infty_0(\real)$ the spaces of functions, which are Borel measurable and bounded, continuous vanishing at infinity, $k$-times differentiable with bounded derivatives, infinitely smooth with compact support, respectively.
We write $L_x$ when we emphasize that the operator $L$ acts with respect to the variable $x$.

\section{Settings and the main result}\label{settings}

\subsection{Outline of the method}\label{outline}

To find the candidate for fundamental solution to \eqref{fund1} we use  parametrix method.  To do this  we involve the properties of the fundamental solution corresponding to the constant-coefficient operator, defined by \eqref{ld}.
It is known (see, for example, \cite{Ja01}) that the  operator $(L(D), C_\infty^2(\real))$ extends to  the generator of a convolution semigroup of probability measures, which gives rise to a L\'evy process $Z_t$. The characteristic function of this process is
\begin{equation}\label{char}
\Ee e^{i\xi Z_t}= e^{-tq(\xi)}, \quad t>0, \quad \xi\in \real,
\end{equation}
where  the function $q(\xi)$  admits the L\'evy-Khinchin representation
\begin{equation}\label{psi}
    q(\xi)=\int_\real (1-\cos(\xi u)) \mu(du).
    \end{equation}

In \cite{KK12a} it is shown that under  assumption \textbf{A1} (see below) this process admits the transition probability density, which can be written as
\begin{equation}\label{ptx}
p_t(x)= \frac{1}{2\pi} \int_\real e^{-ix\xi -tq(\xi)}d\xi.
\end{equation}
Put
\begin{equation}\label{pt0}
p_t^0(x,y):= p_t(y-x).
\end{equation}
Following the classical approach presented in  \cite{Fr64},  see also \cite{Ja02}, we are looking for the fundamental solution $p_t(x,y)$  to \eqref{fund1} in the form
 \begin{equation}\label{sol10}
    p_t(x,y)=p_t^0(x,y) +(p^0\star \Psi)_t(x,y),
    \end{equation}
where the function $\Psi: (0,\infty)\times \real\times \real\to \real$ is to be
determined.

Put
\begin{equation}\label{Phi}
\Phi_t(x,y):= \mathfrak{L}(x,D) p_t^0(x,y).
\end{equation}
Observe that since
\begin{equation}\label{p0}
\big(L(D)-\partial_t\big) p_t^0(x,y)=0,
\end{equation}
we get the equation for $\Psi_t(x,y)$:
$$
\Psi_t(x,y)=\Phi_t(x,y)+ (\Phi\star \Psi)_t(x,y),
$$
from where
\begin{equation}\label{Psi}
\Psi_t(x,y)=\sum_{k=1}^\infty \Phi^{\star k}_t (x,y),
\end{equation}
provided that the series converge. Thus, to justify representation \eqref{sol10} we need to show that the convolutions
$\Phi^{\star k}$ are well defined, and the series \eqref{Psi} converges.

The second part of the program consists of the justification procedure. We point out that the "classical" justification procedure from \cite{Fr64} cannot be performed in our situation.  Instead, in \cite{KK14a} we proposed a way how this problem can be handled in the case when the operator $L(x,D)$ is of the form
 $$
 a(x)  (-\Delta)^{-\alpha/2} + b(x)\nabla, \quad 0<\alpha<2.
 $$
In fact, this method  can be well adapted to our situation, which is done in Sections~\ref{cont} and \eqref{just}. See also \cite{KK14b}.

\subsection{Overview of the problem}\label{overview}

Let us briefly recall the background of the problem of
getting the transition density  estimates for Markov processes and
their applications.

The approach we are going to implement relies on the parametrix method, see
\cite{Fr64} for the description of the classical parametrix method for parabolic systems. Later on this method was extended in \cite{Dr77},  \cite{DE81}, \cite{Ko89}, and \cite{Ko00} to equations with pseudo-differential operators; see also the reference list and an  extensive overview in the monograph \cite{EIK04}.   See also  \cite{CZ13}, which  relied on the results from \cite{Ko89}, for the refined bounds for the constructed fundamental solution, and the martingale problem approach for the  justification procedure.
In  \cite{BJ07}  the case of the  fractional Laplacian perturbed by a gradient is treated;     in the justification procedure it is shown that the integro-differential operator is the weak generator of the respective semigroup.  In \cite{KS14} the case of singular perturbation of the fractional Laplacian is considered; see also \cite{CW13}  for another different approach, which relies on \cite{BJ07}.
We refer to  \cite{Po94} and \cite{Po95}
for the parametrix construction of the transition probability
density of the process which is the weak solution to the SDE driven by a
symmetric $\alpha$-stable process with a drift.  In  \cite{FP10}, \cite{DF13}  and \cite{KK14a} the gradient perturbation of an $\alpha$-stable like operator with $0<\alpha<1$ is investigated.

Another approach
to study the fundamental solution to \eqref{fund1}  involves a
version of  the parametrix method, which relies on the Hilbert space technique  and the symbolic calculus.  Such an  approach is
developed in \cite{Ts74}, \cite{Iw77}, \cite{Ku81}, \cite{Ho98a},
\cite{Ho98b}, \cite{Ja02}, and further extended to evolution
equations in \cite{B\"o05} and \cite{B\"o08}. In such a way one can
construct the  fundamental solution and show that it belongs to certain symbolic
class,  but sofar this
method does not give a way to construct explicit  estimates for
the solution.

 One can also investigate the question of existence and properties of the transition probability density of Markov processes using the Dirichlet forms approach. Starting with a symmetric regular Dirichlet
form and making the assumptions about the  absolute continuity of its kernel,
one can show that the transition probability density of the
respective Markov process exists and satisfies certain upper and
lower estimates, see \cite{CKS87}, \cite{CK08}, \cite{CKK08},
\cite{CKK10}, \cite{BBCK09}, \cite{BGK09}; of course, this list is
far from being complete.  In this case the justification procedure is in fact hidden in the construction itself: one has to
\emph{assume} that the Dirichlet form under consideration is regular.

\subsection{Main results}\label{main-res}

Let
    \begin{equation}
    q^U(\xi):=\int_{\real} \big((\xi u)^2\wedge 1\big)\mu(du),\quad q^L(\xi):=\int_{|u\xi|\leq 1} (\xi u)^2\mu(du).\label{psipm}
    \end{equation}
One can show that $q^L$ and $q^U$ satisfy the inequalities:
    \begin{equation}
    (1-\cos 1) q^L(\xi)\leq q(\xi) \leq 2q^U(\xi), \quad \xi\in \real. \label{psipm1}
    \end{equation}
In what follows we assume that $q(\xi)$ or, equivalently, the  symmetric L\'evy measure  $\mu$ related to $q(\xi)$ by \eqref{psi}, satisfies the condition \textbf{A1}  given below.

\medskip

\begin{itemize}

\item[\textbf{A1.}] There exists some $\beta>1$ such that for large $|\xi|$
\begin{equation}\label{A}q^U(\xi)\leq \beta q^L(\xi).
\end{equation}

\end{itemize}

 For example, for a symmetric $\alpha$-stable L\'evy measure $\mu(du):=c(\alpha) |u|^{-1-\alpha} du$, $\alpha\in (0,2)$, condition \textbf{A1} holds true with $\beta=2/\alpha$. For this reason we introduce the new index
  \begin{equation}\label{al}
  \alpha:= 2/\beta,
  \end{equation}
  where $\beta >1$ is the parameter from condition  \textbf{A1}.

  Condition  \textbf{A1} implies  that $q(\xi)$  has power growth for $|\xi|$ large enough, see  \cite{KK12a}:
\begin{equation}
q(\xi)\geq c|\xi|^{\alpha}. \label{growth}
\end{equation}
Note that the converse  is not true: the power growth type condition \eqref{growth} does not imply \textbf{A1}; see   \cite{KK12a} for the  detailed discussion.

Suppose that the function $m(x,u)$ from representation \eqref{Qxd} satisfies the assumptions below.

\medskip

\textbf{A2.} The function $m(x,u)\geq 0$ is symmetric with respect to $u$ for any $x\in \real$;

 \textbf{A3.}   $\sup_x m(x,u)\leq c(1\wedge |u|^\epsilon)$ for some $c$, $\epsilon>0$.

 \medskip

From now we assume that $\epsilon>0$ in condition \textbf{A3} is small enough, in particular,
\begin{equation}
\int (|u|^\epsilon \wedge 1) \mu(du)=\infty.\label{inf1}
\end{equation}
For example, \eqref{inf1} is satisfied when $0<\epsilon<\alpha$.

Below we state the first main result of our paper.

\begin{theorem}\label{t-main1}
Suppose that assumptions \textbf{A1} -- \textbf{A3} are satisfied. Then the  function $p_t(x,y)$  introduced in  \eqref{sol10} -- \eqref{Psi} is well defined.
\end{theorem}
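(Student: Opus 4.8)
\medskip
\noindent The plan is to follow the parametrix scheme recalled in Section~\ref{outline}: by \eqref{sol10}--\eqref{Psi} it suffices to show that each iterated convolution $\Phi^{\star k}$ is well defined, that the series $\sum_{k\ge1}\Phi^{\star k}_t$ converges absolutely and locally uniformly on $(0,\infty)\times\real\times\real$, and that the outer convolution $p^0\star\Psi$ makes sense. Everything is extracted from a single pointwise bound on $\Phi_t(x,y)=\mathfrak{L}(x,D)p_t^0(x,y)$, combined with the estimates \eqref{ptx-der} for $p_t^0$ and its spatial derivatives, which have the form: a single kernel convolved with a finite measure of uniformly bounded total mass. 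Since $q(\xi)\ge c|\xi|^\alpha$ for large $|\xi|$ by \eqref{growth}, $p_t$ and all its derivatives are bounded and vanish at infinity, so $p_t^0(\cdot,y)\in C_\infty^2(\real)$ and $\Phi_t(x,y)$ is a well-defined, absolutely convergent integral. The feature that is new compared with the classical situation is that the natural majorant of $p_t^0$ is such a compound object rather than a single kernel; accordingly I would first isolate a class $\mathcal H=\{H_t(x,y)\}$ of kernel-times-measure majorants that contains the bounds for $p_t^0$, $\partial_x p_t^0$ and $\partial_x^2 p_t^0$ and is stable, up to multiplicative constants, under the space convolution $*$ and, with a gain of a power of $t$, under the space-time convolution $\star$.

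\emph{Step 1: the bound for $\Phi_t$.} Using \textbf{A2} and the symmetry of $\mu$, rewrite
$$
\Phi_t(x,y)=\frac12\int_\real\Big(p_t(y-x-u)+p_t(y-x+u)-2p_t(y-x)\Big)m(x,u)\,\mu(du),
$$
so the integrand is $O(u^2)$ near $u=0$ and $\int_\real(1\wedge u^2)\mu(du)<\infty$ gives absolute convergence (here $p_t^0\in C^2$ is used). For a quantitative estimate I split the integral at $|u|=\rho_t$, with $\rho_t$ the scaling function of $q$ under \textbf{A1}. On $\{|u|\le\rho_t\}$ I apply a second-order Taylor expansion together with the bound \eqref{ptx-der} for $\partial_x^2 p_t^0$, the inequality $m(x,u)\le c|u|^\epsilon$ from \textbf{A3}, and $\int_{|u|\le\rho_t}|u|^{2+\epsilon}\mu(du)\le\rho_t^\epsilon\int_{|u|\le\rho_t}u^2\mu(du)=\rho_t^{2+\epsilon}q^L(1/\rho_t)$; on $\{|u|>\rho_t\}$ I use $m(x,u)\le c$, the triangle inequality, the bound \eqref{ptx-der} for $p_t^0$, and $\mu(\{|u|>\rho_t\})\le q^U(1/\rho_t)$ --- the shift by $u$ in $p_t(y-x-u)$ being what produces the finite-measure factor of the resulting majorant. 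Collecting the scaling factors and invoking \textbf{A1} through \eqref{A}, \eqref{al} and \eqref{growth} (together with the standing smallness of $\epsilon$, cf.\ \eqref{inf1}), this yields
$$
|\Phi_t(x,y)|\le c\,t^{\delta_0-1}H_t(x,y),\qquad H_t\in\mathcal H,
$$
for some $\delta_0>0$ depending on $\epsilon$ and $\alpha$.

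\emph{Step 2: the convolution iteration.} The crucial ingredient is a reproduction inequality for $\mathcal H$: $H_s*H_t\le c\,H_{s+t}$ uniformly in $s,t$, whence $\int_0^t s^{a-1}(t-s)^{b-1}(H_{t-s}*H_s)(x,y)\,ds\le c\,B(a,b)\,t^{a+b-1}H_t(x,y)$ for all $a,b>0$. For the single-kernel component this is the familiar sub-convolutivity estimate; the point that is new is that convolving the finite-measure factors must keep their total mass bounded, which is exactly where the structure of the majorant in \eqref{ptx-der} has to be exploited. Granting this, a routine induction gives, for $k\ge1$,
$$
|\Phi^{\star k}_t(x,y)|\le\frac{c^k\,\Gamma(\delta_0)^k}{\Gamma(k\delta_0)}\,t^{k\delta_0-1}H_t(x,y),
$$
so that each $\Phi^{\star k}$ is well defined and, since $\sum_{k\ge1}c^k\Gamma(\delta_0)^k/\Gamma(k\delta_0)<\infty$, the series \eqref{Psi} converges absolutely and locally uniformly, with $|\Psi_t(x,y)|\le c\,t^{\delta_0-1}H_t(x,y)$. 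Finally, by \eqref{ptx-der} for $p^0$, the bound for $\Psi$ and the reproduction inequality with $a=1$, $b=\delta_0$,
$$
|(p^0\star\Psi)_t(x,y)|\le\int_0^t\!\int_\real p_{t-\tau}^0(x,z)\,|\Psi_\tau(z,y)|\,dz\,d\tau\le c\,t^{\delta_0}H_t(x,y)<\infty,
$$
so $p_t(x,y)$ in \eqref{sol10} is well defined, which is the assertion.

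\emph{The main obstacle.} The hard part is the reproduction inequality for the compound majorants of \eqref{ptx-der}. In the classical parabolic or $\alpha$-stable case one convolves honest Gaussian- or stable-type densities and sub-convolutivity is essentially automatic, whereas here one must show that the kernel-times-measure form is preserved under $*$ with only a bounded loss in the mass of the measure factor, and that the accompanying kernels still satisfy a sub-convolutive bound after the $\star$-convolution; tracking the interplay of the scaling function $\rho_t$, the truncated characteristics $q^U$, $q^L$ and the measure factor is the technical core of the argument. Once this is settled, the remaining Gamma-function bookkeeping is standard.
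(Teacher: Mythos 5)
Your overall strategy (bound $\Phi_t$ by a ``kernel convolved with a finite measure'' majorant, iterate through the time--space convolution, sum the series) is the paper's strategy, and your Step~1 is essentially the paper's Lemma~\ref{Phi-up}: symmetrization to kill the first-order term, Taylor expansion with the bound \eqref{ptx-der} on the small-$u$ part, triangle inequality plus the tail measure on the large-$u$ part. (Minor slip there: since $\rho_t=(q^U)^{-1}(1/t)\to\infty$ as $t\to0$, the split must be at $|u|=1/\rho_t$, not $|u|=\rho_t$; your identity $\int_{|u|\le\rho_t}u^2\mu(du)=\rho_t^2q^L(1/\rho_t)$ is internally consistent but lives on the wrong scale.)

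The genuine gap is in Step~2. The reproduction inequality you ``grant'', $H_s*H_t\le c\,H_{s+t}$ with a constant independent of the iteration step, is false for the kernel $f_{up}(x)=d_1e^{-d_2|x|\log(1+|x|)}$ that \eqref{ptx-der} forces on you. Writing $D(x)=|x|\ln(1+|x|)$, convexity only gives $D(a)+D(b)\ge 2D(|x|/2)=D(x)-|x|\ln 2+o(|x|)$, so each convolution necessarily degrades the constant in the exponent: one gets $g_{t,\theta_{k-1}}*g_{t,\theta_{k-1}}\le C_0(\theta_{k-1}-\theta_k)^{-1}g_{t,\theta_k}$ with a strictly smaller $\theta_k$, and since $\sum_j(\theta_{j-1}-\theta_j)\le1$, the accumulated factor $\prod_{j\le k}(\theta_{j-1}-\theta_j)^{-1}\ge (k-1)^{k-1}$ grows super-exponentially and is \emph{not} beaten by $\Gamma(k\delta_0)^{-1}$. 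This is exactly why the paper states after Lemma~\ref{gen} that the bounds \eqref{lk0} are ``still not sufficient for proving the convergence of the series.'' The missing idea is the paper's Lemma~\ref{es-new}: after $k_0=[1/(\alpha\delta)]+1$ steps the spare factor $t^{k_0\delta}\rho_t$ is bounded, so in every subsequent convolution one of the two kernels can be replaced by its $L_1$-norm (freezing the exponent at a fixed $\zeta$), whence each further step costs only a fixed constant $M$ and the tail $D_\ell\sim (CM)^\ell B((k_0+\ell-1)\delta,\delta)$ is summable. Without this (or an equivalent device) your induction does not yield convergence of \eqref{Psi}.
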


\begin{remark}\label{rem10}
\begin{itemize}

\item[a)]
   If assumption \eqref{inf1} fails, the situation is even simpler. Heuristically, in this case the total intensity of the perturbation is finite. We investigate this situation in the Appendix~B.

\item[b)] By assumptions \textbf{A2} and \textbf{A3}, the measure $\mu(dy)$ dominates $m(x,u)\mu(du)$, implying that the operator $\mathfrak{L}(x,D)$ is a lower order perturbation of $L(D)$.

\item[c)]  The  symmetry assumption  imposed on the function  $m(x,u)$ and on the measure $\mu(du)$ is  purely technical, and  is introduced in order to make the presentation as transparent as possible. For a further investigation we refer to   \cite{KK14b}, where a more general kernel is considered.


  \end{itemize}
\end{remark}
By the theorem on continuity with respect to parameter, the functions $q^U(\xi)$ and $q^U(\xi)/\xi^2$ are continuous, respectively, on $[-1,1]$ and $\real\backslash [-1,1]$,  which implies that $q^U(\xi)$ is continuous on $\real$. Further, since $(q^U)'(\xi)= \frac{2}{\xi}q^L(\xi)$ in the a.e. sense, and  due to condition \textbf{A1} we have  $q^L(\xi)>0$ for all $\xi$ large enough, the function $q^U(\xi) $ is strictly increasing on $[a, \infty)$, where $a>0$ is some constant.
 Thus, the function
 \begin{equation}
\rho_t:=\big(q^U\big)^{-1}\left(1/t\right), \quad t\in (0,1], \label{rot}
\end{equation}
is correctly defined; here  $\big(q^U\big)^{-1}$  is the inverse  of $q^U$.

Observe that by \eqref{growth} we have
\begin{equation}
\rho_t \leq C  t^{-1/\alpha}, \quad t\in (0,1].\label{growth2}
\end{equation}
Denote by $\sigma\in [\alpha, 2]$  the minimal value for which there exists $c>0$ such that
\begin{equation}
\rho_t\geq c t^{-1/\sigma}, \quad t\in (0,1]. \label{growth3}
\end{equation}
This estimate is equivalent to the following upper bound on the growth of the characteristic exponent: there exists $c>0$ such that
$$
 q(\xi)\leq c |\xi|^\sigma \quad \text{for large $|\xi|$.  }
$$

Put
\begin{equation}
f_{up}(x):=d_1 e^{-d_2 |x|\log (1+|x|)},  \quad f_{low}(x) = d_3 (1-d_4|x|)_+, \label{flu}
\end{equation}
where $d_i$, $i=1-4$, are some positive constants.

In the proposition below we state the continuity and smoothness properties of the constructed function $p_t(x,y)$, and provide the respective upper bounds.
\begin{proposition}\label{p-main1}
\begin{itemize}
\item[1.] The function $p_t(x,y)$ is  continuous in $(t,x,y)\in (0,\infty)\times \real\times \real$.

\item[2.] There exist constants $d_1,d_2>0$ and a family of probability  measures $\{Q_t, \, t\geq 0\}$,  such that
\begin{equation}\label{eqII}
p_t(x,y)\leq \rho_t \Big(f_{up}(\rho_t \cdot ) * Q_t\Big)(y-x), \quad t\in (0,1], \quad x,y\in \real,
\end{equation}
where $f_{up}$ is a function of the form \eqref{flu} with constants $d_1,d_2$.

\item[3.] There exists  $\partial_t p_t(x,y)$, which is continuous in   $(t,x,y)\in (0,\infty)\times \real\times \real$.

 \item[4.] There exist constants $\tilde{d}_1,\tilde{d}_2>0$ and  a family of probability  measures $\{\tilde{Q}_t, \,t\geq 0\}$,  such that the following estimate holds true:
   \begin{equation}\label{der-es}
   |\prt_tp_t(x,y)|\leq Ct^{-1}\rho_t \Big(f_{up}* \tilde{Q}_t\Big)(y-x),\quad  t\in (0,1], \quad x,y\in \real,
   \end{equation}
where $f_{up}$ is a function of the form \eqref{flu} with constants $\tilde{d}_1,\tilde{d}_2$.

\end{itemize}
\end{proposition}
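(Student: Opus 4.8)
The plan is to work throughout with the series representation~\eqref{sol10}--\eqref{Psi}, propagating a single family of estimates through its building blocks. Call a kernel $g_t(x,y)$ on $(0,1]\times\real\times\real$ \emph{admissible with weight $w$} if
\[
g_t(x,y)\le c\,w(t)\,\rho_t\bigl(f_{up}(\rho_t\,\cdot)*\Lambda_t\bigr)(y-x)
\]
for some constant $c$, some measurable $w\colon(0,1]\to(0,\infty)$, and some family $\{\Lambda_t\}$ of sub-probability measures on $\real$, where $f_{up}$ is as in~\eqref{flu}; this is exactly the ``convolution of a single kernel and a finite measure'' announced after~\eqref{bell}. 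As inputs I would use the estimates of \cite{KK12a}: there, by splitting $\mu$ at the scale $1/\rho_t$, one writes $p_t$ as the density of the truncated (small-jump) process --- which satisfies a genuine single-kernel bound of the form~\eqref{bell} with $f$ of the type~\eqref{flu} --- convolved with the compound-Poisson law of the large jumps, so that $p^0_t(x,y)=p_t(y-x)$ is admissible with weight $w\equiv1$ and with $\Lambda_t$ a probability measure, $\partial_t p^0_t$ is admissible with weight $w(t)=t^{-1}$, and the first two $x$-derivatives of $p_t$ are admissible with weights carrying extra powers of $\rho_t$. I would also use that $\rho$ is doubling, $\rho_{t/2}\le C\rho_t$: this follows from~\textbf{A1}, since $q^L\le q^U\le\beta q^L$ together with $(q^U)'=\tfrac{2}{\xi}q^L$ force $(\log q^U)'(\xi)\in[\alpha/\xi,\,2/\xi]$ for large $\xi$, hence $q^U(2\xi)\le4q^U(\xi)$.

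\emph{Step 1: the kernel $\Phi$.} Using the symmetry of $m(x,\cdot)\mu$ (assumption~\textbf{A2}) I would first write
\[
\Phi_t(x,y)=\tfrac12\int_\real\bigl(p_t(y-x+u)+p_t(y-x-u)-2p_t(y-x)\bigr)\,m(x,u)\,\mu(du),
\]
so that only second-order increments of $p_t$ enter. Splitting the integral at $|u|\sim1/\rho_t$ (and, for the large-jump part, also at $|u|\sim1$, using $m(x,u)\le c(|u|^\epsilon\wedge1)$ from~\textbf{A3} and $\mu(\{|u|\ge1\})<\infty$), using the second-difference estimate for $p_t$ on $\{|u|\le1/\rho_t\}$ and the admissibility of $p_t$ itself on $\{|u|>1/\rho_t\}$, and invoking $q^U(\rho_t)=1/t$ together with the two-sided bounds~\eqref{growth2}--\eqref{growth3}, one obtains that $\Phi_t$ is admissible with weight $w(t)=t^{-1+\delta}$ for some $\delta>0$ (one may take $\delta=\epsilon/\sigma$). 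The gain $\delta>0$ --- equivalently, time-integrability of this weight --- is precisely what the lower-order hypotheses~\textbf{A2}--\textbf{A3} buy, and it is what makes the whole scheme close.

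\emph{Steps 2 and 3: convolution algebra, summation, and Parts 1, 2, 3.} Next I would check that admissibility is stable under $*$ (the $\Lambda$'s compose by convolution, and $[\rho_a f_{up}(\rho_a\,\cdot)]*[\rho_b f_{up}(\rho_b\,\cdot)]\le C\,\rho_{a\wedge b}f_{up}(\rho_{a\wedge b}\,\cdot)$ after a harmless enlargement of $d_2$, using the super-additivity of $x\mapsto|x|\log(1+|x|)$) and under $\star$ (for $0<\tau<t\le1$, doubling of $\rho$ keeps $\rho_{t-\tau}$ and $\rho_\tau$ comparable to $\rho_t$ over the relevant range of $\tau$, so the spatial convolution again collapses to one kernel at scale $1/\rho_t$, while the weights combine via a Beta integral, $t^{-1+a}\star t^{-1+b}$ producing $B(a,b)\,t^{-1+a+b}$, and the measures through a normalized $\tau$-average). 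Iterating, $\Phi^{\star k}_t$ is admissible with weight $A^k\Gamma(\delta)^k\Gamma(k\delta)^{-1}t^{-1+k\delta}\le A^k\Gamma(\delta)^k\Gamma(k\delta)^{-1}t^{-1+\delta}$ (as $t\le1$), and $\sum_k A^k\Gamma(\delta)^k/\Gamma(k\delta)<\infty$; hence~\eqref{Psi} converges, $\Psi_t$ is admissible with weight $t^{-1+\delta}$, and $(p^0\star\Psi)_t$ is admissible with constant weight. Adding $p^0_t$ gives~\eqref{eqII}, with $Q_t$ the normalization of the resulting sub-probability $\Lambda_t$; this also re-proves Theorem~\ref{t-main1}. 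Part~1 then follows by dominated convergence: $p^0_t$ is continuous in $(t,x,y)$ by~\eqref{ptx}, each $\Phi^{\star k}_t$ is continuous (dominated by its admissibility bound) with the series converging locally uniformly, and the $z$- and $\tau$-integrations defining $p^0\star\Psi$ preserve continuity --- the lack of continuity of $\Psi$ in its first argument, which is all that the mere boundedness (no continuity assumption) of $m(\cdot,u)$ leaves, being smoothed out by the $z$-integration against $p^0$. For Part~3 I would differentiate~\eqref{sol10}: $\partial_t(p^0\star\Psi)_t=\Psi_t+(\partial_t p^0)\star\Psi_t$, the summand $\Psi_t$ arising as the approximate-identity limit $\lim_{\tau\uparrow t}\int_\real p^0_{t-\tau}(x,z)\Psi_\tau(z,y)\,dz$; continuity of $\partial_t p^0_t$ (from~\eqref{ptx}) and of the convolution term then yields Part~3.

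\emph{Step 4: Part 4, which I expect to be the main obstacle.} The weight $(t-\tau)^{-1}$ of $\partial_t p^0_{t-\tau}$ is \emph{not} integrable at $\tau\uparrow t$, so $(\partial_t p^0)\star\Psi$ cannot be estimated directly. The remedy is the standard parametrix device: integrating by parts in $\tau$ one has $\int_0^t\partial_t p^0_{t-\tau}(x,z)\,d\tau=p^0_t(x,z)-\delta_x(z)$, so subtracting the frozen value $\Psi_t$ produces the cancellation
\[
\partial_t(p^0\star\Psi)_t(x,y)=(p^0*\Psi)_t(x,y)+\int_0^t\!\!\int_\real\partial_t p^0_{t-\tau}(x,z)\bigl(\Psi_\tau(z,y)-\Psi_t(z,y)\bigr)\,dz\,d\tau.
\]
The first term is admissible with weight $t^{-1+\delta}$; the second I would tame by establishing a H\"older-type modulus of continuity in time for $\tau\mapsto\Psi_\tau$ --- a factor $(t-\tau)^{\kappa}$ with $\kappa>0$ that absorbs the $(t-\tau)^{-1}$ singularity --- which is extracted from the bound $|\partial_s\Phi_s|\le C\,s^{-2+\delta}\rho_s(\cdots)$ (Step~1 applied to $\partial_s p^0_s$ in place of $p^0_s$) together with the $\star$-algebra of Steps~2 and~3. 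Combined with $|\mathfrak{L}(x,D)\partial_t p^0_t|\le C\,t^{-1}\cdot t^{-1+\delta}\rho_t(\cdots)$ for the $\partial_t p^0_t$ term and a resummation as before, this gives~\eqref{der-es}. Making the time-regularity of $\Psi$ precise and uniform as $t\downarrow0$ --- equivalently, carrying a matching modulus of continuity in $t$ alongside each admissibility bound through the $\star$-algebra --- is the technical heart of the argument; everything else is a long but routine bookkeeping of scales.
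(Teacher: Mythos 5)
Your Steps 1--3 follow essentially the same route as the paper: the compound-kernel ("admissible") bounds for $p^0_t$ and $\Phi_t$, the convolution algebra with Beta-function weights, the resummation of $\sum_k\Phi^{\star k}$, and then Parts 1 and 2 by dominated convergence and by adding the bound for $p^0_t$ to the bound for $(p^0\star\Psi)_t$; this is exactly Lemmas~\ref{Phi-up}, \ref{gen}, \ref{es-new} and the proof of Theorem~\ref{t-main1} combined with the argument given for statements 1--2 of Proposition~\ref{p-main1}. (One small bookkeeping point: the bound $|\partial_t\partial_x^k p^0_t|\le Ct^{-1}\rho_t^k(g_t*\mathcal P_t)$ is not taken from \cite{KK12a} as an input; the paper proves it as Lemma~\ref{p0-der} in Appendix~A, since \cite{KK12a} supplies only the spatial-derivative estimates.)

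For Parts 3--4 you take a genuinely different route, and that is where the gap is. Your frozen-value identity
$\partial_t(p^0\star\Psi)_t=(p^0*\Psi)_t+\int_0^t\!\int\partial_t p^0_{t-\tau}\,(\Psi_\tau-\Psi_t)\,dz\,d\tau$
is formally correct, but it transfers all the difficulty into a uniform H\"older-in-time modulus for $\tau\mapsto\Psi_\tau$, and the way you propose to obtain it — from $|\partial_s\Phi_s|\le Cs^{-2+\delta}(\cdots)$ "together with the $\star$-algebra" — does not go through as stated: the weight $s^{-2+\delta}$ is not time-integrable, so the Beta-integral calculus of your Steps 2--3 does not apply to it, and differentiating (or H\"older-bounding) each $\Phi^{\star k}_t=\int_0^t\Phi^{\star(k-1)}_{t-s}\star\Phi_s\,ds$ under the integral reproduces exactly the same non-integrable singularity at $s\uparrow t$ one level down; you acknowledge this recursion ("carrying a matching modulus through the $\star$-algebra") but do not supply the device that closes it. The paper closes it much more simply: it splits every time convolution at $t/2$ and symmetrizes, as in \eqref{Phik}--\eqref{Phik10} for $\Phi^{\star(k+1)}$ and in the analogous three-term representation of $p_t(x,y)$ itself, so that the $t$-derivative only ever falls on a factor evaluated at time at least $t/2$; the weights $t^{-1}$ and $t^{-2+\delta}$ are then harmless, no modulus of continuity is needed, one gets genuine differentiability of $\Psi_t$ in $t$ with the bound \eqref{phi-der10} (Lemma~\ref{phi-der}), and Parts 3--4 follow directly with the measure $\tilde Q_t$ built from $\mathcal P_t$, $\Pi_t$ and $\Theta_t$. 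Your outline could in principle be completed by importing this $t/2$-splitting (which would also make the frozen-value subtraction unnecessary), but as written the "technical heart" of your Step 4 is precisely the part that is missing.
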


Proposition~\ref{p-main1} enables us to transfer the continuity and smoothness properties from $p_t(x,y)$ to the operator $T_t$, defined by \eqref{t-semi}.

\begin{proposition}\label{p-main2}

\begin{itemize}

\item[1.] The operator $T_t$, defined  in \eqref{t-semi}, maps $B_b(\real)$ into $C_\infty(\real)$.

\item[2.] For any $C_\infty(\real)$
\begin{equation}\label{Tt0}
T_t f \longrightarrow f, \quad t\to 0,
\end{equation}
in $C_\infty(\real)$.

\item[3.] For any $f\in C_\infty(\real)$  the mapping
   $$
   (0,\infty)\ni t\mapsto T_tf\in C_\infty(\real)
   $$
   is continuously differentiable, and its derivative is given by
   $$
   (\prt_t T_tf)(x)=\int_\real \prt_tp_t(x,y) f(y)dy.
   $$
\end{itemize}
\end{proposition}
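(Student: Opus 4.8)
\emph{Overall plan and preliminaries.} I would derive all three items by transferring to $T_t$ the regularity of the kernels $p_t(x,y)$ and $\partial_t p_t(x,y)$ furnished by Proposition~\ref{p-main1}, using \eqref{eqII} and \eqref{der-es} as integrable majorants in routine applications of dominated convergence, Scheff\'e's lemma and Fubini's theorem. Two facts will be used throughout. Integrating \eqref{eqII} in $y$ and using that $Q_t$ is a probability measure gives $\int_\real p_t(x,y)\,dy\le\|f_{up}\|_{L_1}<\infty$, and moreover $\int_\real p_t(x,y)\,dy=1$: indeed $\int_\real p^0_t(x,y)\,dy=1$ by \eqref{char} at $\xi=0$, while
\[
\int_\real\Phi_\tau(x,y)\,dy=\int_\real m(x,u)\Big[\int_\real\big(p_\tau(y-x-u)-p_\tau(y-x)\big)\,dy\Big]\mu(du)=0 ,
\]
so, iterating along the convolutions, $\int_\real\Phi^{\star k}_\tau(x,y)\,dy=0$ for all $k\ge1$ and hence $\int_\real(p^0\star\Psi)_t(x,y)\,dy=0$. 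Also $\rho_t\to\infty$ as $t\to0$, since $\rho_t=(q^U)^{-1}(1/t)$ with $q^U$ increasing.

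\emph{Part (1).} For $f\in B_b(\real)$ boundedness is clear, $|T_tf(x)|\le\|f\|_\infty$. For continuity in $(t,x)$ I would invoke Proposition~\ref{p-main1}(1) and Scheff\'e's lemma: if $(t_n,x_n)\to(t_0,x_0)$, then $p_{t_n}(x_n,\cdot)\to p_{t_0}(x_0,\cdot)$ pointwise, and since all of these are probability densities the convergence is in $L_1(\real)$, whence $T_{t_n}f(x_n)\to T_{t_0}f(x_0)$. For the decay at infinity I would take $f\in C_\infty(\real)$ and, using \eqref{eqII} together with the change of variables $y=x+w+v/\rho_t$, write
\[
|T_tf(x)|\le\int_\real\int_\real\big|f(x+w+v/\rho_t)\big|\,f_{up}(v)\,Q_t(dw)\,dv ;
\]
the integrand is dominated by $\|f\|_\infty f_{up}(v)\in L_1(Q_t\otimes dv)$ and tends to $0$ as $|x|\to\infty$ for each $(w,v)$, so dominated convergence gives $T_tf(x)\to0$. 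In particular $T_t$ maps $C_\infty(\real)$ into itself.

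\emph{Part (2).} Using $\int p_t(x,y)\,dy=1$, for $f\in C_\infty(\real)$ I would estimate, with $\omega_f$ the modulus of continuity of the uniformly continuous $f$,
\[
\|T_tf-f\|_\infty\le\sup_x\int_\real|f(y)-f(x)|\,p_t(x,y)\,dy\le\omega_f(\delta)+2\|f\|_\infty\sup_x\int_{|y-x|>\delta}p_t(x,y)\,dy ,
\]
and then, from \eqref{eqII} and the substitution $v=\rho_t(z-w)$,
\[
\sup_x\int_{|y-x|>\delta}p_t(x,y)\,dy\le\|f_{up}\|_{L_1}\,Q_t\big(\{|w|>\delta/2\}\big)+\int_{|v|>\rho_t\delta/2}f_{up}(v)\,dv .
\]
The second term $\to0$ as $t\to0$ since $\rho_t\to\infty$ and $f_{up}\in L_1(\real)$; the first $\to0$ as well, provided $Q_t(\{|w|>r\})\to0$ for every $r>0$ -- a tightness property of the family $\{Q_t\}$ which I would record while proving \eqref{eqII}. (Equivalently, decompose $T_t=T_t^0+R_t$, where $T_t^0$ is the semigroup of the L\'evy process $Z$, with kernel $p^0_t$, and $R_t$ has kernel $(p^0\star\Psi)_t$: then $T_t^0f\to f$ in $C_\infty(\real)$ by the classical Feller property of $Z$, while $\|R_tf\|_\infty\le\|f\|_\infty\sup_x\int_\real|(p^0\star\Psi)_t(x,y)|\,dy\to0$, the last bound being a by-product of the construction behind Theorem~\ref{t-main1}.) Letting $\delta\to0$ after $t\to0$ yields \eqref{Tt0}.

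\emph{Part (3) and the main obstacle.} I would put $g_t(x):=\int_\real\partial_t p_t(x,y)f(y)\,dy$; by \eqref{der-es} this converges absolutely with $|g_t(x)|\le Ct^{-1}\rho_t\|f_{up}\|_{L_1}\|f\|_\infty$, and the arguments of Parts (1)--(2), now applied to $\partial_t p_t(x,y)$ -- continuous in $(t,x,y)$ by Proposition~\ref{p-main1}(3) -- with majorant \eqref{der-es}, show $g_t\in C_\infty(\real)$ and that $t\mapsto g_t$ is continuous from $(0,\infty)$ into $C_\infty(\real)$. Next, for fixed $x$ and $0<s<t$ the fundamental theorem of calculus gives $p_t(x,y)-p_s(x,y)=\int_s^t\partial_\tau p_\tau(x,y)\,d\tau$ for each $y$; multiplying by $f(y)$, integrating in $y$ and applying Fubini -- valid since $\int_s^t\int_\real|f(y)||\partial_\tau p_\tau(x,y)|\,dy\,d\tau\le C\|f\|_\infty\|f_{up}\|_{L_1}\int_s^t\tau^{-1}\rho_\tau\,d\tau<\infty$ -- yields
\[
T_tf(x)-T_sf(x)=\int_s^t g_\tau(x)\,d\tau,\qquad x\in\real,\quad 0<s<t .
\]
Since $\tau\mapsto g_\tau$ is continuous into the Banach space $C_\infty(\real)$ and $T_\tau f\in C_\infty(\real)$ by Part (1), this identity holds in $C_\infty(\real)$, so $t\mapsto T_tf$ is continuously differentiable there, with $\partial_t T_tf=g_t$, which is the asserted formula. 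The only genuinely non-routine point in all of the above is the uniformity in $x$ demanded by the $C_\infty(\real)$-valued conclusions of Parts (2) and (3), i.e.\ controlling the tails of $p_t(x,\cdot)$ and $\partial_t p_t(x,\cdot)$ uniformly in $x$ as $t\to0$ (resp.\ $t\to t_0$); it is exactly the ``single kernel convolved with a probability measure'' shape of the bounds in Proposition~\ref{p-main1}, together with the tightness of $\{Q_t\}$ and $\{\tilde Q_t\}$ that emerges from the construction, that makes this go through.
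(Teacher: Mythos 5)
Your overall strategy -- transferring the regularity of $p_t(x,y)$ and $\partial_tp_t(x,y)$ to $T_t$ through the compound-kernel majorants \eqref{eqII}, \eqref{der-es} -- is the paper's strategy, and your treatment of Part 3 (define $g_t(x)=\int_\real\partial_tp_t(x,y)f(y)dy$, dominate by \eqref{der-es}, and integrate up via the fundamental theorem of calculus) is a reasonable, indeed more detailed, version of the paper's terse ``follows from statement 3 of Proposition~\ref{p-main1}''. But there is a genuine gap: circularity through positivity. In Part 1 you apply Scheff\'e's lemma ``since all of these are probability densities'', and in Part 2 you drop absolute values in $\int_\real|f(y)-f(x)|\,p_t(x,y)\,dy$. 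At this point of the paper nothing guarantees $p_t(x,y)\ge0$: non-negativity is proved only in Lemma~\ref{pmp}, inside the justification procedure of Section~\ref{just}, whose ingredients (Lemmas~\ref{aux-e} and \ref{l5}) explicitly rely on Propositions~\ref{p-main1} \emph{and} \ref{p-main2}. So the Scheff\'e step assumes a fact that is established downstream of the statement you are proving. The paper avoids this entirely: continuity of $T_tf$ is obtained by the same dominated-convergence argument (with the integrable majorant $c(g_{s,\theta}\ast\Pi_s)(u)$) as in statement 1 of Proposition~\ref{p-main1}, and Part 2 is proved by the two bounds $\sup_x\int_\real|\phi(y)|\,|(p^0\star\Psi)_t(x,y)|\,dy\le c\,t^\delta$ (from \eqref{up-11}) and $\sup_x|\int_\real p^0_t(x,y)\phi(y)dy-\phi(x)|\to0$ as $t\to0$; your parenthetical alternative is exactly this and is the version to keep. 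Your primary Part 2 route has a second unproved ingredient, the tightness $Q_t(\{|w|>r\})\to0$ as $t\to0$, which you merely announce; it is plausible but nowhere established, and it is unnecessary once the $O(t^\delta)$ bound is used. Finally, your derivation of $\int_\real p_t(x,y)dy=1$ interchanges integrals for the first-order difference $p_\tau(y-x-u)-p_\tau(y-x)$; this Fubini step is not justified when $\int(1\wedge|u|^{1+\epsilon})\mu(du)=\infty$ (which is allowed, e.g.\ $\alpha>1+\epsilon$). One must symmetrize in $u$ (use the second difference, exploiting symmetry of $\mu$ and $m(x,\cdot)$) together with the bound $|\partial_x^2p^0_t|\le\rho_t^2f_t$ from \eqref{ptx-der} to get an integrable majorant. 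Note the paper never needs this identity here; it derives \eqref{id} later from \eqref{eq-norm}, after positivity is available, and its proof of Part 2 does not use mass one at all.

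Two smaller remarks. For Part 1 you prove $T_t:B_b(\real)\to C_b(\real)$ and decay at infinity only for $f\in C_\infty(\real)$; the proposition literally claims $T_tf\in C_\infty(\real)$ for all $f\in B_b(\real)$ (as does the paper's own sketch, via $p^0_t(x,y)\to0$ and uniform tail control of $(p^0\star\Psi)_t$), so you should either prove the vanishing for bounded $f$ from those tail estimates or flag the restriction explicitly. And in Part 3, the continuity of $\tau\mapsto g_\tau$ \emph{in the supremum norm}, which your Bochner-type identity $T_tf-T_sf=\int_s^tg_\tau\,d\tau$ in $C_\infty(\real)$ requires, needs the same uniform-in-$x$ domination argument (via \eqref{der-es} and the structure of $\tilde Q_t$) that you only gesture at in your closing sentence; spelled out, it is the analogue of the paper's dominated-convergence proof of statement 1 of Proposition~\ref{p-main1} applied to $\partial_tp_t$.
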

Below we present the second main result of the paper.

\begin{theorem}\label{t-main2} Under  conditions of Theorem~\ref{t-main1}, the statements below hold true.

\begin{itemize}
  \item[I.]  The family of operators $(T_t)_{t\geq 0}$ defined in  \eqref{t-semi} forms  a strongly continuous conservative semigroup on  $C_\infty(\real)$, which corresponds to a (strong) Feller Markov process $X$.

   \item[II.]  The process $X$ is a solution to
   the martingale problem
  \begin{equation}\label{mart}
  (L, C^2_\infty(\real)).
  \end{equation}

  \item[III.] The closure in $C_\infty(\real)$ of the operator $(L(x,D), C_\infty^2(\real))$ is the generator of the semigroup $(T_t)_{t\geq 0}$.   Consequently,  the martingale problem \eqref{mart} is well posed, and the process $X$  is uniquely determined  as  its  unique solution.
\end{itemize}
\end{theorem}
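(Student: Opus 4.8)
The plan is to assemble Theorem~\ref{t-main2} from the analytic facts collected in Propositions~\ref{p-main1} and~\ref{p-main2}, together with a martingale problem argument. For part~I, one first checks that $(T_t)_{t\ge0}$ is a semigroup: the Chapman--Kolmogorov identity $T_{t+s}=T_tT_s$ should follow from the convolution structure \eqref{sol10}--\eqref{Psi} of $p_t(x,y)$ (this is the standard parametrix bookkeeping with the $\star$- and $*$-convolutions, using that $p^0$ already satisfies Chapman--Kolmogorov and that $\Psi$ solves the Volterra equation). Positivity of $p_t(x,y)$ and conservativeness $\int_\real p_t(x,y)\,dy=1$ need their own argument; the clean way is to obtain them \emph{a posteriori} from the martingale problem / approximation by bounded-intensity operators (Remark~\ref{rem10}a) and Appendix~B), where positivity-preservation is built in, rather than trying to read them off the series directly. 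Strong continuity on $C_\infty(\real)$ is exactly Proposition~\ref{p-main2}(1)--(2): $T_t$ maps $B_b(\real)$ into $C_\infty(\real)$, in particular $C_\infty(\real)$ into itself, and $T_tf\to f$ in $C_\infty(\real)$ as $t\to0$; together with the semigroup property this gives a strongly continuous contraction (sub-Markov, then Markov by conservativeness) semigroup. The (strong) Feller property is the $B_b\to C_\infty$ mapping property in Proposition~\ref{p-main2}(1). Hence $(T_t)$ corresponds to a (strong) Feller Markov process $X$ by the standard Feller-to-process correspondence (e.g. via \cite{Ja01}).

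For part~II, I would show that for every $f\in C_\infty^2(\real)$ the process
\[
M_t^f:=f(X_t)-f(X_0)-\int_0^t (L(x,D)f)(X_s)\,ds
\]
is a martingale with respect to the natural filtration of $X$. The key identity is
\[
\prt_t T_tf = T_t\bigl(L(x,D)f\bigr)=L(x,D)\bigl(T_tf\bigr),\qquad f\in C_\infty^2(\real),
\]
for which Proposition~\ref{p-main2}(3) already gives $\prt_t T_tf(x)=\int_\real \prt_t p_t(x,y)f(y)\,dy$ with the right-hand side continuous in $t$; one must still identify this derivative with $T_t(Lf)$ (equivalently $L(T_tf)$). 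This identification is where the construction of $p_t(x,y)$ as the parametrix series pays off: differentiating \eqref{sol10} in $t$ and using \eqref{p0}, \eqref{Phi} and the fixed-point equation for $\Psi$ produces exactly $L(x,D)p_t^0 + (L(x,D)p^0)\star\Psi$ plus the boundary term $\Phi_t$, which reassembles into $L(x,D)p_t(x,y)$; the bounds \eqref{eqII}, \eqref{der-es} justify all the interchanges of integration, differentiation and the (nonlocal) operator $L(x,D)$. Given this, the Dynkin-type computation $\Ee[f(X_t)\mid\mathcal F_s]-f(X_s)=\Ee\bigl[\int_s^t (Lf)(X_r)\,dr\mid\mathcal F_s\bigr]$ is routine, so $X$ solves the martingale problem $(L,C_\infty^2(\real))$.

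For part~III, the scheme is: (i) by part~II and the identity $\prt_tT_tf=T_t(Lf)$ above, every $f\in C_\infty^2(\real)$ lies in the domain of the generator $A$ of $(T_t)$ and $Af=L(x,D)f$, so $A$ extends $(L(x,D),C_\infty^2(\real))$; since $A$ is closed, it extends the closure $\overline{(L(x,D),C_\infty^2(\real))}$. (ii) For the reverse inclusion it suffices to show $C_\infty^2(\real)$ is a \emph{core} for $A$, i.e. dense in the domain of $A$ in the graph norm, or equivalently (Hille--Yosida / Lumer--Phillips) that $(\lambda-L(x,D))C_\infty^2(\real)$ is dense in $C_\infty(\real)$ for some $\lambda>0$. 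The natural candidate for the inverse is $R_\lambda f:=\int_0^\infty e^{-\lambda t}T_tf\,dt$, and one checks $R_\lambda(\lambda-L)f=f$ on $C_\infty^2(\real)$ using part~II; range-density then follows because $R_\lambda$ is injective with dense range (a resolvent of a strongly continuous semigroup). (iii) Well-posedness of the martingale problem \eqref{mart} follows from uniqueness of the solution: any solution's one-dimensional distributions are determined by the resolvent $R_\lambda=(\lambda-\overline{L})^{-1}$, hence coincide with those of $X$, and then the Markov/strong Markov property upgrades this to uniqueness in law on path space by the standard Stroock--Varadhan argument.

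The main obstacle is step~(ii) of part~III together with the operator-identity $\prt_tT_tf=T_t(Lf)=L(T_tf)$ underlying it: one must push the nonlocal operator $L(x,D)$ through the integral defining $T_tf$ and through the $t$-derivative, and this requires the full strength of the convolution-with-a-finite-measure bounds \eqref{eqII}--\eqref{der-es} (rather than a clean single-kernel Aronson-type estimate) to control the $u$-integration in $L(x,D)$ uniformly. Equivalently, one needs enough regularity of the \emph{approximative} fundamental solution — the point flagged in the introduction as the reason the \cite{KK14a}--\cite{KK14b} justification method is used — to make the Volterra/fixed-point manipulations legitimate and to verify the core property. Everything else (semigroup property, Feller property, the Dynkin computation) is comparatively routine once Propositions~\ref{p-main1} and~\ref{p-main2} are in hand.
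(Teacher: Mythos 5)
There is a genuine gap, and it sits exactly where you put the weight of the argument. Your parts II and III hinge on the identity $\prt_t T_tf=T_t(Lf)=L(T_tf)$, which you propose to obtain by differentiating the series \eqref{sol10} in $t$ and pushing $L(x,D)$ through the integrals defining $p_t(x,y)$. But the paper never establishes (and the estimates \eqref{eqII}, \eqref{der-es} do not give) enough spatial regularity of $p_t(x,y)$ to apply $L(x,D)$ to it directly: the bounds available control $\prt_t p_t$ and derivatives of $p^0_t$, not second-order (or fractional) $x$-regularity of the full series uniformly near the diagonal. This is precisely why the paper works with the \emph{approximative} fundamental solution $p_{t,\eps}$ of \eqref{pe}, which is genuinely $C^2_\infty$ in $x$ (Lemma~\ref{aux-e}), and only ever applies $L(x,D)$ to $p_{t,\eps}$, showing that $q_{t,\eps}=(L-\prt_t)p_{t,\eps}\to 0$ in the sense of Lemma~\ref{l5}. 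You flag this issue in your last paragraph, but flagging it does not close it: without the $\eps$-regularization your reassembly of $L(x,D)p^0+(Lp^0)\star\Psi+\Phi_t$ into $Lp_t$ is not justified. Similarly, your treatment of positivity and conservativeness ("a posteriori from the martingale problem / approximation by bounded-intensity operators, Appendix~B") does not work as stated: Appendix~B treats the different regime where \eqref{inf1} fails, it is not an approximation scheme for the general case, and deducing positivity of the analytically constructed kernel from a martingale problem solution is circular unless you have already identified $p_t$ as the transition density of some process. The paper's actual mechanism is a positive-maximum-principle argument applied to $\tilde p_{t,\eps}=p_{t,\eps}+t\theta/(2\mathfrak{T})$ (Lemma~\ref{pmp}), from which non-negativity, the semigroup property and the key identity \eqref{eq-norm} all follow; conservativeness then comes from \eqref{eq-norm} by taking $f_k(x)=f(x/k)$, and part~II is read off \eqref{eq-norm} plus the Markov property.

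Your step (ii) of part~III also contains a non sequitur. Checking $R_\lambda(\lambda-L)f=f$ for $f\in C^2_\infty(\real)$ only re-proves $C^2_\infty(\real)\subset\mathcal D(A)$; it does not show that $(\lambda-L)C^2_\infty(\real)$ is dense in $C_\infty(\real)$, which is what the core property requires, and $C^2_\infty(\real)$ is not known to be invariant under $T_t$ here, so the usual invariance criterion is unavailable. The paper instead shows $A\supset L$ on $C^2_\infty(\real)$ via the Dynkin (characteristic) operator and Doob's optional sampling, and then proves that for every $\mathfrak f\in\mathcal D(A)$ and $t>0$ the element $T_t\mathfrak f$ lies in the domain of the closure of $(L,C^2_\infty(\real))$, by approximating it with $\mathfrak f_{t,\eps}=\int p_{t,\eps}(\cdot,y)\mathfrak f(y)\,dy\in C^2_\infty(\real)$ and using Lemmas~\ref{aux-e} and \ref{l5} to pass to the limit in the graph norm. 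In short: the missing idea throughout is the $p_{t,\eps}$-machinery of Section~\ref{just}; the pieces you treat as routine (positivity, the semigroup property, $\prt_tT_tf=LT_tf$, the core identification) are exactly the ones that require it.
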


Finally, we state the on-diagonal and lower bounds for $p_t(x,y)$.

\begin{proposition}\label{lower}
\begin{itemize}

\item[1.] There exist constants $c_1$, $c_2>0$ such that
\begin{equation}\label{eqIIa}
c_1 \rho_t \leq p_t(x,x)\leq c_2 \rho_t, \quad t\in (0,1], \quad x\in \real.
\end{equation}
\item[2.]  There exist constants $d_3,d_4>0$ such that
\begin{equation}\label{low}
p_t(x,y)\geq \rho_t  f_{low}((x-y)\rho_t), \quad t\in (0,1], \quad x,y\in \real,
\end{equation}
where $f_{low}$ is of the form  \eqref{flu} with constants $d_3,d_4$.
\end{itemize}

\end{proposition}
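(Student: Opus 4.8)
The plan is to derive both the on-diagonal two-sided bound and the off-diagonal lower bound by exploiting the representation \eqref{sol10}--\eqref{Psi} together with the upper estimate \eqref{eqII} of Proposition~\ref{p-main1}, and the known sharp bounds for the ``free'' density $p_t^0(x,y)=p_t(y-x)$ established in \cite{KK12a}. First I would recall from \cite{KK12a} that under \textbf{A1} the free density satisfies $p_t(0)\asymp\rho_t$ and, more generally, an upper bound of the convolution type $p_t(x)\le\rho_t(f_{up}(\rho_t\cdot)*Q_t^0)(x)$ as well as a lower bound $p_t(x)\ge\rho_t f_{low}(\rho_t x)$ on the region $|x|\le c/\rho_t$; the latter follows from the continuity and positivity of $p_1$ at the origin and the approximate scaling $p_t(x)\approx\rho_t p_1(\rho_t x)$ encoded in the definition \eqref{rot} of $\rho_t$. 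This gives the desired two-sided bound \eqref{eqIIa} and the lower bound \eqref{low} for the principal term $p_t^0(x,x)$, respectively $p_t^0(x,y)$, with appropriate constants.

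Next I would control the correction term $(p^0\star\Psi)_t(x,y)$ and show it is a genuinely lower-order contribution near the diagonal and on small scales. For the on-diagonal upper bound in \eqref{eqIIa} this is immediate from \eqref{eqII}: since $Q_t$ is a probability measure and $f_{up}$ is bounded, $p_t(x,x)\le\rho_t\,\|f_{up}\|_\infty\,Q_t(\real)=c_2\rho_t$. For the on-diagonal \emph{lower} bound, and for \eqref{low}, the key is a smallness estimate of the form
\begin{equation}\label{cor-est}
|(p^0\star\Psi)_t(x,y)|\le C\,t^{\delta}\,\rho_t\,\big(f_{up}(\rho_t\cdot)*\widehat{Q}_t\big)(y-x),\quad t\in(0,1],
\end{equation}
for some $\delta>0$ and some family of probability measures $\widehat Q_t$, which should be extractable from the convergence-of-series analysis behind Theorem~\ref{t-main1}: each factor $\Phi=\mathfrak L(x,D)p^0$ carries, by assumption \textbf{A3} and the ``lower order'' structure noted in Remark~\ref{rem10}(b), an extra gain of a power of $\rho_t^{-\epsilon}$, hence of $t^{\epsilon/\sigma}$ after time-integration, and the $\star$-iteration in \eqref{Psi} produces a convergent series whose sum inherits a factor $t^{\epsilon/\sigma}$ (the exponent $\delta=\epsilon/\sigma$, or some fraction of it, being the natural candidate). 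Combining the lower bound on $p_t^0$ with \eqref{cor-est}, on the region $|y-x|\le c\rho_t^{-1}$ one gets
\[
p_t(x,y)\ \ge\ \rho_t f_{low}(\rho_t(y-x))\ -\ C t^{\delta}\rho_t\,c'\ \ge\ \tfrac12\,\rho_t f_{low}(\rho_t(y-x))
\]
for $t$ small, and in particular $p_t(x,x)\ge c_1\rho_t$; the restriction to small $t$ is removed by the continuity in Proposition~\ref{p-main1}(1), compactness, and the strict positivity of $p_t(x,y)$ for $t$ bounded away from $0$, which itself follows from the semigroup and strong Feller properties in Theorem~\ref{t-main2} (or directly from irreducibility of $X$).

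The main obstacle I anticipate is establishing \eqref{cor-est} with the convolution-with-a-probability-measure structure on the right-hand side rather than a mere single-kernel bound: because $\mu$ is not assumed comparable to an $\alpha$-stable measure, $p_t^0$ itself only obeys a convolutional upper bound, and one must check that the operations of applying $\mathfrak L(x,D)$, convolving in space ($*$) and in space-time ($\star$), and summing the series, all preserve the class ``$\rho_t\cdot(f_{up}(\rho_t\cdot)*(\text{sub-probability measure}))$'' while producing the quantitative time-gain $t^\delta$. This is exactly the delicate bookkeeping that the parametrix estimates of Section~\ref{constr} are designed to handle, so I would phrase \eqref{cor-est} as a corollary of the bounds proved there (in particular of the analogue of \eqref{ptx-der} for $\Psi$ and $p^0\star\Psi$), and then the lower bounds follow by the elementary subtraction argument above. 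A secondary, more routine point is verifying the claimed equivalence between \eqref{growth3} and the upper growth $q(\xi)\le c|\xi|^\sigma$, which enters only through the exponent in the time-gain and can be taken from \cite{KK12a}.
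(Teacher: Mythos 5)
Your proposal follows essentially the same route as the paper: bound the correction term via the estimate \eqref{up-11} (which gives $|(p^0\star\Psi)_t(x,y)|\le c\,\rho_t t^{\delta}$, so your anticipated ``main obstacle'' \eqref{cor-est} is already available from the proof of Theorem~\ref{t-main1}), combine it with the bounds for $p^0_t$ from Lemma~\ref{aux1} for the on-diagonal upper bound, and obtain the lower bounds by the subtraction argument $p_t\ge p^0_t-|(p^0\star\Psi)_t|$ for small $t$. Your extra remark on removing the small-$t$ restriction via continuity/positivity (or the semigroup property) is a sensible supplement to a point the paper passes over, but it does not change the method.
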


In the theorem below we show that under the assumption that the tails of the (re-scaled)  measure $\mu$ are dominated by the tails of some  distribution,  one can obtain the upper and lower estimates on $p_t(x,y)$ in a rather simple form.
\begin{definition} Let  $h:\, [0,\infty)\to[0,\infty)$.
We say that  $h\in \mathcal{L}$, if   $\lim_{x\to \infty} \frac{h(x-y)}{h(x)}=1$ for all $y>0$.
\end{definition}
\begin{theorem}\label{t-jump3}
  Assume that the conditions of Theorem~\ref{t-main1}  hold true, and $0<\epsilon<\alpha$ from condition
  \textbf{A3} is fixed. Suppose that one of the conditions below is satisfied:

\begin{itemize}

\item[I.] There exists a distribution function  $\mathfrak{G}(v)$ on $[0,\infty)$, such that
\begin{equation}
t\mu\Big(\{u: |\rho_t u|>v\}\Big)\leq C(1-\mathfrak{G}(v)), \quad v\geq 1, \quad t\in (0,1]. \label{dens02}
\end{equation}

\item[II.]  The L\'evy measure $\mu$ admits a density $\pi(u)$, and there exists a  probability density $\mathfrak{g}(u)$ on $[0,\infty)$ such that
\begin{equation}
\pi_t(u):=\tfrac{t}{\rho_t} \pi\big(\tfrac{u}{\rho_t})\leq c\mathfrak{g}(u), \quad u\geq 1, \quad t\in (0,1].  \label{dens01}
\end{equation}
\end{itemize}
In addition, assume that the function
\begin{equation}
h(x)=
\begin{cases}
1-\mathfrak{G}(x), & \text{under condition I;}
\\
\mathfrak{g}(x), & \text{under condition II, }
\end{cases} \label{h}
\end{equation}
belongs to $h\in \mathcal{L}$, $x^{2\epsilon} h(x)$ is monotone decreasing  for some $\epsilon>0$, and

 a)  for all $c,\, x\geq 1 $ we have  $h(cx)\leq c^{-1} h(x)$;

 b)  there exists $c>0$ such that $h(x)\leq c h(2x)$ for all $x\geq 1$.

Then the function $p_t(x,y)$, given by \eqref{sol10},   is well-defined,  and for all $t\in (0,1]$, $x,y\in \real$,  the
following estimate holds true:
\begin{equation}
p_t(x,y)\leq C\rho_t \big( f_{up}(\rho_t (y-x))+ h(\rho_t(y-x))+
t^{\epsilon/\sigma} h_\epsilon(\rho_t(y-x))\big) \label{txy2}
\end{equation}
where $h_\epsilon(x)=x^\epsilon
h(x)$, and $\sigma$ is defined prior to \eqref{growth3}.
\end{theorem}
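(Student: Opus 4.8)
The plan is to sharpen estimate \eqref{eqII} of Proposition~\ref{p-main1}, in which the probability measure $Q_t$ is only implicitly defined, by tracking the L\'evy measure $\mu$ through the parametrix series \eqref{Psi} and using the tail-domination hypotheses \textbf{I}/\textbf{II} to replace the $t$-dependent, implicit measures that assemble $Q_t$ by the single fixed profile $h$. Since \textbf{I} and \textbf{II} are only added on top of \textbf{A1}--\textbf{A3}, well-definedness of $p_t(x,y)$ is already furnished by Theorem~\ref{t-main1}, so only the bound \eqref{txy2} has to be established. The first step is to re-examine the construction of Section~\ref{constr} and isolate the termwise structure behind \eqref{eqII}: by \eqref{Phi}--\eqref{Qxd} each summand $\Phi^{\star k}_t$ of \eqref{Psi} is an iterated $\star$-convolution of $\mathfrak{L}(x,D)p_t^0$, and the latter is controlled by $\rho_t$ times a kernel of the form $f_{up}(\rho_t\,\cdot)$ convolved with a subprobability measure obtained by rescaling $m(x,u)\mu(du)$ by $\rho_t$. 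Splitting this measure at the natural scale $|u|\sim 1/\rho_t$ separates a ``large-jump'' part, in which $m$ is used only through $m\le c$, from a ``small-jump'' part, in which the factor $1\wedge|u|^\epsilon$ of condition \textbf{A3}, together with \eqref{growth3} and $q^U(\rho_t)=1/t$, produces a gain of order $\rho_t^{-\epsilon}\lesssim t^{\epsilon/\sigma}$. These two parts are precisely the origin of the terms $h(\rho_t(y-x))$ and $t^{\epsilon/\sigma}h_\epsilon(\rho_t(y-x))$ in \eqref{txy2}, the free kernel $p_t^0$ accounting for $f_{up}$, and all the higher convolution powers $k\ge2$ being absorbed into the same three profiles.

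The analytic core is a convolution lemma for $h$. Under \textbf{I} (resp.\ \textbf{II}) the rescaled perturbation measure has tail $\le C h$ on $[1,\infty)$, with $h\in\mathcal{L}$, $x^{2\epsilon}h(x)$ decreasing, and a), b) in force. I would prove: for every finite measure $\nu$ on $\real$ with $\nu(\{|z|>v\})\le C_0 h(v)$ for $v\ge1$, and for $g$ equal to $f_{up}$, to $h$, or to $h_\epsilon$,
\[
(g*\nu)(x)\ \le\ C\bigl(g(x)+\|\nu\|\,h(x)\bigr),\qquad (h*\nu)(x)\ \le\ C\bigl(1+\|\nu\|\bigr)h(x).
\]
This is the standard splitting of the convolution into $\{|z|\le|x|/2\}$ and $\{|z|>|x|/2\}$: on the first region the long-tail property $h\in\mathcal{L}$, made quantitative and uniform via b), lets the bulk of $\nu$ pass through without distorting the tail of $g$; on the second region assumption a) ($h(cx)\le c^{-1}h(x)$) bounds $g(x-z)$ by $h(x)$ while $\nu$ is integrated against its own tail, and b) guarantees convergence of the ensuing dyadic sums. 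Feeding this lemma into the termwise estimates of the first step, carrying out the $\star$-convolutions in time as in Section~\ref{constr}, and summing the resulting series in $k$ --- convergent because each $\star$-convolution in time contributes a summable factor of the type $t^{k\epsilon/\sigma}/\Gamma(1+k\epsilon/\sigma)$ --- then collapses every contribution onto $f_{up}$, $h$, and $t^{\epsilon/\sigma}h_\epsilon$, which is exactly \eqref{txy2}.

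The step I expect to be the main obstacle is the convolution lemma together with the bookkeeping induced by the rescaling $z\mapsto z/\rho_t$: one has to control $f_{up}*\nu$ and $h*\nu$ \emph{simultaneously}, by $f_{up}+h$ and by $h$ respectively, with constants that do not grow with the iteration index $k$, and this is exactly the place where a), b) and $h\in\mathcal{L}$ are indispensable --- any of them failing would admit a stray logarithm or an unsummable constant. One also has to check that the rescaling genuinely turns the \textbf{A3}-smallness into the clean power $t^{\epsilon/\sigma}$ through \eqref{growth3} rather than merely $\rho_t^{-\epsilon}$, which also explains why $\sigma$, and not $\alpha$, enters \eqref{txy2}. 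A secondary point is that $Q_t$ in \eqref{eqII} arises from a normalization of the parametrix series, so rather than starting from the already-assembled \eqref{eqII} it is cleaner to re-derive the termwise bounds directly from Section~\ref{constr}, thereby keeping all constants and profiles explicit throughout.
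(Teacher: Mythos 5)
Your overall architecture --- termwise bounds on the parametrix series with the three profiles $f_{up}$, $h$, $h_\epsilon$, a convolution lemma for $h$ proved by splitting at $|x|/2$ using a), b) and $h\in\mathcal{L}$, then summation in $k$ --- is essentially the paper's (Lemmas~\ref{con1}.ii, \ref{gen}.b, \ref{Phi-up2}). But there is a genuine gap in your treatment of the free kernel. You assert that $p^0_t$ ``accounts for $f_{up}$'' and that $\mathfrak{L}(x,D)p^0_t$ is controlled by $\rho_t f_{up}(\rho_t\,\cdot)$ convolved with a measure coming only from $m(x,u)\mu(du)$. Under \textbf{A1} alone this is false: the free density and the derivatives $\partial_x^k p^0_t$ that enter $\Phi_t$ (through the Taylor remainder on $\{|\rho_t u|\le 1\}$ and the difference $p^0_t(x+u,y)-p^0_t(x,y)$ on $\{|\rho_t u|>1\}$) satisfy only the compound-kernel bound of Lemma~\ref{aux1}, namely $\rho_t^{k+1}\bigl(f_{up}(\rho_t\,\cdot)*P_t\bigr)$, where $P_t$ is the compound Poisson measure built from $t\,\mu$ restricted to $\{|\rho_t u|>1\}$; for heavy-tailed $\mu$ (e.g.\ $\alpha$-stable) $p^0_t$ has polynomial tails and cannot be dominated by $\rho_t f_{up}(\rho_t\,\cdot)$. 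The whole point of hypotheses I/II is to upgrade this compound bound to the bell-like one $\rho_t^{k+1}\bigl(f_{up}+h\bigr)(\rho_t\,\cdot)$; the paper does so by checking that a), b) and $h\in\mathcal{L}$ force $\mathfrak{G}$ (resp.\ $\mathfrak{g}$) to be sub-exponential and invoking Lemma~\ref{aux2} from \cite{KK12a}. In your scheme this amounts to applying your convolution lemma uniformly to all the convolution powers of the truncated measure hidden inside $P_t$ (a Kesten-type bound), which the proposal never addresses. This also scrambles your bookkeeping for \eqref{txy2}: the prefactor-free term $h(\rho_t(y-x))$ originates from the free kernel $p^0_t$ in \eqref{sol10}, while the large-jump part of the perturbation, since $m\le c(1\wedge|u|^\epsilon)$ lets one extract $\rho_t^{-\epsilon}\lesssim t^{\epsilon/\sigma}$ there as well, only produces the $t^{\epsilon/\sigma}h_\epsilon$ contribution (the relevant measure $\chi_{t,\epsilon}$ of \eqref{chi1} has tails $\lesssim h_\epsilon$, see \eqref{chi10}), not a plain $h$.

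A secondary issue is the summation in $k$: you claim constants that do not grow with $k$ and a clean $1/\Gamma$-type gain, but the difficulty in the paper is not with the $h$-part (Lemma~\ref{con1}.ii indeed gives a $k$-independent constant) but with the exponential profile: $g_{t,\theta}*g_{t,\theta}$ is only dominated by $g_{t,\theta'}$ with a strictly smaller exponent $\theta'<\theta$, so the naive induction yields constants $C_k$ as in \eqref{Ck} which blow up. The paper repairs this with the truncation device of Lemma~\ref{es-new} (after $k_0$ steps one trades $t^{k_0\delta}\rho_t\le c$ for an $L_\infty$ bound); your outline would need this, or an equivalent mechanism, to make the series converge.
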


\begin{remark}
Intuitively, Theorem~\ref{t-jump3} represents  the cases in which it
is possible to construct the "bell-like" estimate  (cf. \eqref{bell}) for the transition
probability density $p_t(x,y)$. In Section~\ref{example} we provide
some examples which illustrates the above theorems.  At the same
time, we emphasize, that although the bell-like estimate is more
explicit than the compound  kernel estimate proved in \eqref{eqII},
 the latter is more accurate, and reflects the
true  structure  of the impact of the L\'evy measure.
\end{remark}

\section{Construction of the parametrix series. Proofs of Theorems~\ref{t-main1} and \eqref{t-jump3}. }\label{constr}

\subsection{Generic Calculation}\label{gen}

In this subsection we state the results which are crucial for the proof of Theorems~\ref{t-main1} and Theorem~\ref{t-jump3}.

Let $g_t(x)$ be a function of the form
\begin{equation}\label{gtc}
g_{t}(x):=\rho_t e^{-c \rho_t|x| \ln (1+\rho_t|x|)},
\end{equation}
where $c>0$ is some constant.
Define also
\begin{equation}\label{hh}
h_\epsilon(x):= x^\epsilon h(x),\quad  h_{t,\epsilon}(x):= \rho_t h_\epsilon(|x|\rho_t),
\end{equation}
where $h$ is defined in \eqref{h}.

Suppose that we know already  that $\Phi_t(x,y)$ satisfies the upper bounds given below; the proofs will be given in
 Section~\ref{t21}.

\begin{itemize}

\item[i)] Under  the conditions of Theorem~\ref{t-main1},

\begin{equation} \label{ph11}
\big|   \Phi_t(x,y)\big|\leq C t^{-1+\delta} \big(\tilde{g}_{t} *
G_{t}\big)(y-x),
\end{equation}
where  $C>0$, $\delta\in (0,1)$ are some constant,
\begin{equation}\label{gttil}
\tilde{g}_{t}(x)= t^\delta g_t(x),
 \end{equation}
 the function  $g_t$ is of the form \eqref{gtc} with some constant $c>0$,  and  $\{G_t(du), t\geq 0\}$ is some  family of probability measures.

\item[ii)]
Under conditions of Theorem~\eqref{t-jump3},
\begin{equation}\label{ld101}
\big|\Phi_t(x,y)\big|\leq C t^{-1+\delta}\big( g_{t}(y-x) + h_{t,\epsilon}(y-x)\big),
\end{equation}
where  $C>0$, $\delta\in (0,1)$ are some constant,  and  $g_t$ is of the form  \eqref{gtc} with some constant $c>0$.
\end{itemize}

The key ingredient in the proof of the  upper bounds on the convolutions $\Phi_t^{\star k}(x,y)$ is provided by the convolution property of the functions $\tilde{g}_t$ and $h_{t,\epsilon}$, respectively.

Fix now the constant $c>0$ in the definition of $g_t$ in \eqref{gtc}, and put
\begin{equation}\label{gtk}
g_{t,\theta}(x):=  \rho_t e^{-\theta c \rho_t |x|\ln (1+\rho_t |x|)}, \quad \theta\in (0,1).
\end{equation}
\begin{lemma}\label{con1}
\begin{itemize}
\item[i)]
For any $\theta\in (0,1)$  one has
  \begin{equation}\label{gt11-new}
(\tilde{g}_{t-s} *\tilde{g}_{s} )(x) \leq (1-\theta)^{-1}c  t^{2\delta} g_{t,\theta}(x), \quad 0<s<t, \quad x\in \real.
\end{equation}
\item[ii)]  For any $\epsilon\in [0,\alpha)$ one has
\begin{equation}\label{h-es}
\big(h_{t-s,\epsilon}* h_{s,\epsilon}\big)(x)\leq C h_{t,\epsilon}(x), \quad x\in \real.
\end{equation}
\end{itemize}
\end{lemma}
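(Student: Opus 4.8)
The plan is to verify the two convolution inequalities by reducing them to elementary estimates on the scaling function $\rho_t$ and on the profile functions $g_t$ and $h_{t,\epsilon}$. For part i), I would first record the scaling behaviour of $g_t$: writing $g_t(x)=\rho_t G(\rho_t x)$ with $G(x)=e^{-c|x|\ln(1+|x|)}$, the function $G$ has super-exponential tails, so for a suitable splitting of the integration variable $z$ in $(\tilde g_{t-s}*\tilde g_s)(x)=\int_\real \tilde g_{t-s}(x-z)\tilde g_s(z)\,dz$ one of the two factors dominates. Concretely I would split $\real$ according to whether $|x-z|\ge |x|/2$ or $|z|\ge|x|/2$; on the first region bound $\tilde g_{t-s}(x-z)$ by (a constant times) $t^\delta g_{t,\theta}(x)$ — losing a factor from the $\theta$-contraction of the exponent and absorbing the remaining exponential decay — and integrate out $\tilde g_s(z)$, which contributes $\int \tilde g_s(z)\,dz = s^\delta\int g_s(z)\,dz \le C s^\delta$ after a change of variables $z\mapsto z/\rho_s$; symmetrically on the second region. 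The point where the factor $(1-\theta)^{-1}$ enters is exactly the elementary inequality $e^{-c a\ln(1+a)}\le C_\theta\, e^{-\theta c (a+b)\ln(1+a+b)}$ valid when $a\ge (a+b)/2$; summing the two regions and using $s^\delta+(t-s)^\delta\le 2t^\delta$ yields the stated bound with $t^{2\delta}$ in front. The monotonicity of $\rho_t$ in $t$ (so that $\rho_{t-s},\rho_s\le\rho_{t/2}\le$ const $\cdot\,\rho_t$, up to the doubling property of $\rho$ that follows from \textbf{A1}) is what lets me replace $\rho_{t-s}$ and $\rho_s$ by $\rho_t$ throughout.

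For part ii), the mechanism is similar but now the tails are only polynomial, so the bookkeeping is more delicate. Again write $h_{t,\epsilon}(x)=\rho_t h_\epsilon(\rho_t|x|)$ with $h_\epsilon(x)=x^\epsilon h(x)$, and split the convolution integral into the region $|z|\le|x|/2$ and $|x-z|\le|x|/2$. On $\{|x-z|\le|x|/2\}$ we have $|z|\asymp|x|$, so using property b) ($h(x)\le c\,h(2x)$, equivalently $h_\epsilon(|x|)\le C h_\epsilon(|z|)$ when $|z|\ge|x|/2$) together with the monotonicity of $x^{2\epsilon}h(x)$ we can pull out $h_{t,\epsilon}(x)$ and integrate the remaining factor $h_{t-s,\epsilon}(x-z)$ over $|x-z|\le|x|/2$; the integral $\int h_{t-s,\epsilon}(w)\,dw$ is finite and bounded by a constant precisely because $\epsilon<\alpha$ — after rescaling $w\mapsto w/\rho_{t-s}$ it becomes $\int_\real |w|^\epsilon h(|w|)\,dw$, and the decay of $h$ forced by $h\in\mathcal L$ plus property a) ($h(cx)\le c^{-1}h(x)$, i.e. $h(x)\lesssim x^{-1}$) makes $|w|^\epsilon h(|w|)$ integrable at infinity when $\epsilon<1\le\alpha$; near the origin the $|w|^\epsilon$ weight is harmless. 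The small-$z$ region is handled symmetrically, and again $\rho_{t-s},\rho_s$ are comparable to $\rho_t$ by the doubling property.

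The main obstacle, I expect, is part ii): unlike the Gaussian-type closure in part i), the polynomial tails do not close under convolution "for free", and one must use all three structural hypotheses on $h$ — membership in $\mathcal L$, the scaling bound a), and the doubling bound b) — in a coordinated way, and crucially exploit $\epsilon<\alpha$ to keep $\int|w|^\epsilon h(|w|)\,dw$ finite. A secondary technical point common to both parts is controlling the ratios $\rho_{t-s}/\rho_t$ and $\rho_s/\rho_t$ uniformly in $0<s<t\le1$; this is where condition \textbf{A1}, via \eqref{growth} and \eqref{growth3}, is used to guarantee that $\rho$ is doubling, so that splitting the time interval costs only a multiplicative constant. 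Once these ingredients are in place, both inequalities follow by the two-region decomposition described above.
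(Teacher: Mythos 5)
Your treatment of part ii) is essentially the paper's own argument (split at $|z|\le|x|/2$ versus $|x-z|\le|x|/2$, pull out $h_{t,\epsilon}(x)$ via hypotheses a) and b) and the monotonicity of $\rho$, integrate the remaining factor), so that half is fine in outline — except for the justification of $\int h_{s,\epsilon}<\infty$: from a) one only gets $h(x)\lesssim x^{-1}$, so $|w|^\epsilon h(|w|)\lesssim |w|^{\epsilon-1}$, which is \emph{not} integrable at infinity, and $\alpha=2/\beta$ need not be $\ge 1$. What is actually used (and tacitly assumed in the paper, cf.\ \eqref{M2}) is the finiteness of $\int h_\epsilon$; your stated reason for it does not prove it.

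The genuine gap is in part i). Your two-region split keeps only one factor pointwise and integrates the other out, so on $\{|x-z|\ge|x|/2\}$ the retained exponent is at best $D(\rho_{t-s}|x-z|)\ge D(\rho_t|x|/2)$ with $D(u)=u\ln(1+u)$, and since $D(u/2)/D(u)\to 1/2$, the ``elementary inequality'' $e^{-cD(a)}\le C_\theta e^{-\theta c D(a+b)}$ for $a\ge(a+b)/2$ is simply false for $\theta>1/2$. Hence your argument can prove \eqref{gt11-new} only for $\theta<1/2$, while the lemma claims it for every $\theta\in(0,1)$ — and this range is not a luxury: in Lemma~\ref{gen} and Lemma~\ref{es-new} the loss of $\theta$ per convolution must be a multiplicative factor arbitrarily close to $1$, so that the sequence $\theta_k$ (e.g.\ $\theta_k=\tfrac12+\tfrac1{2k}$) stays bounded away from $0$; with only $\theta<1/2$ available the $\theta_k$ would decay geometrically to $0$ and the final bound $g_{t,\zeta}$ would degenerate. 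The paper avoids this by never discarding either factor's decay: using that $\rho_{t-s},\rho_s\ge\rho_t$ (note $\rho$ is \emph{decreasing} in $t$; the doubling bound $\rho_{t/2}\le c\rho_t$, which follows from $q^U(\xi)\le q^U(c\xi)\le c^2q^U(\xi)$ rather than from \eqref{growth}--\eqref{growth3}, is needed only for the on-diagonal prefactor) and the convexity of $D$, one gets $D(|x-z|\rho_{t-s})+D(|z-y|\rho_s)\ge 2D\big(\tfrac12|x-y|\rho_t\big)$, which up to a linear-in-$|x-y|\rho_t$ defect (absorbable by slightly lowering $\theta$) is the full $D(|x-y|\rho_t)$; one then splits this \emph{total} exponent into a $\theta$-fraction producing $g_{t,\theta}(y-x)$ and a $(1-\theta)$-fraction that makes the $dz$-integral converge — which is precisely where the constant $(1-\theta)^{-1}$ in \eqref{gt11-new} comes from, not from the pointwise comparison you propose. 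To repair your proof you would have to replace the two-region/discard-one-factor step by such a convexity (or equivalent triangle-type) argument on the sum of the two exponents.
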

We postpone the proof till Appendix C.

 For $k\geq 1$ define $G^{(1)}_t(dw)\equiv G_t(dw)$,
\begin{equation}\label{gk-new}
G^{(k+1)}_t(dw)={1\over B(\delta, k\delta)}\int_0^1 \int_\rn (1-r)^{-1+k\delta} r^{-1+\delta}  G_{t(1-r)}^{(k)}(dw-u) G_{tr}(du)dr.
\end{equation}

\begin{lemma}\label{gen}
Let  $(\theta_k)_{k\geq 1}$ be a sequence of real numbers, such that  $\theta_1=1$, $0<\theta_{k+1}<\theta_k$.
\begin{itemize}
\item[a)] Suppose that the estimate \eqref{ph11} holds true. Then
\begin{equation}
\big|\Phi^{\star k}_t(x,y)\big|\leq C_k t^{-1+k\delta}  \big(g_{t}^{(k)}* G_t^{(k)}\big)(x-y), \quad k\geq 2,  \label{lk0}
\end{equation}
where  $g_t^{(k)}(x)= t^{k\delta}g_{t,\theta_k}(x)$,
\begin{equation}\label{Ck}
C_k:=\frac{ C^k C_0^{k-1} \Gamma^k(\delta)}{\Gamma(k\delta)} \prod_{j=2}^k (\theta_{k-1}-\theta_k)^{-1},
\end{equation}
$C>0$ is the constant from \eqref{ph11},   $C_0>0$, and the family of  probability measures $\{G_t^{(k)},\, t>0,\, k\geq 2\}$ is defined by \eqref{gk-new}.

\item[b)]
Suppose that the estimate \eqref{ld101} holds true. Then
\begin{equation}
|\Phi^{\star k}_t(x,y)|\leq C_kt^{-1+k\delta}\big( g_{t,\theta_k}(x-y) + h_{t,\epsilon}(x-y)\big), \quad k\geq 2. \label{LZ21}
\end{equation}
where $C_k$ is given by \eqref{Ck}, in which now the constant $C>0$ comes from \eqref{ld101}.
\end{itemize}
\end{lemma}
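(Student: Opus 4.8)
The plan is to prove both parts by induction on $k$, using the single-step bounds \eqref{ph11} (resp. \eqref{ld101}) as the base case $k=1$ together with the convolution inequalities of Lemma~\ref{con1} to pass from $k$ to $k+1$. The induction hypothesis at step $k$ is precisely \eqref{lk0} (resp. \eqref{LZ21}), and one writes $\Phi^{\star(k+1)}_t = \Phi^{\star k}\star\Phi_t$, i.e.
\[
\Phi^{\star(k+1)}_t(x,y)=\int_0^t\int_\real \Phi^{\star k}_{t-s}(x,z)\,\Phi_s(z,y)\,dz\,ds .
\]
Substituting the inductive bound for $\Phi^{\star k}_{t-s}$ and the base bound for $\Phi_s$, and writing $s=tr$, the time integral produces a Beta-type integral $\int_0^1 (1-r)^{-1+k\delta} r^{-1+\delta}\,dr = B(\delta,k\delta)$, which is exactly what is normalized away in the definition \eqref{gk-new} of $G^{(k+1)}_t$; the spatial integral is a convolution in the $z$-variable, which is why $G^{(k+1)}$ is defined as a (weighted) convolution of $G^{(k)}$ and $G$, and hence remains a probability measure. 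This accounts for the power $t^{-1+(k+1)\delta}$ and for the measure factor $G^{(k+1)}_t$ in part a), and for the corresponding bookkeeping of constants, since $C_{k+1}/C_k = C\cdot C_0\cdot \Gamma(\delta)\,\Gamma((k+1)\delta)^{-1}\Gamma(k\delta)\cdot(\theta_k-\theta_{k+1})^{-1}$ matches the telescoping structure of \eqref{Ck}.

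For part a), the one genuinely delicate point is controlling the convolution of the kernels $g^{(k)}_{t-s}$ and $g_s$, i.e. of $\tilde g_{t-s}$-type and $\tilde g_s$-type functions with different sub-Gaussian-logarithmic decay rates $\theta_k c$ and $c$. Here Lemma~\ref{con1} i) is applied with the parameter $\theta=\theta_{k+1}/\theta_k\in(0,1)$ (after first rescaling the slower-decaying factor), which yields a bound by $g_{t,\theta_{k+1}}$ up to the constant $(1-\theta_{k+1}/\theta_k)^{-1}c$; this is the source of the factor $(\theta_k-\theta_{k+1})^{-1}$ in $C_{k+1}$. One must be slightly careful that the convolution identity in Lemma~\ref{con1} i) is stated for $\tilde g_{t-s}*\tilde g_s$ with a common constant $c$, so a preliminary step is to observe that $g_{t,\theta_k}$ is itself of the form \eqref{gtc} with constant $\theta_k c$ and to apply the lemma after adjusting constants; keeping track that $g^{(k)}_t(x)=t^{k\delta}g_{t,\theta_k}(x)$ absorbs the powers of $t$ correctly.

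Part b) is analogous but easier on the measure side, since there are no auxiliary measures $G^{(k)}$ — only a sum of two kernels $g_{t,\theta_k}+h_{t,\epsilon}$. Expanding the convolution $(g_{t-s,\theta_k}+h_{t-s,\epsilon})\star(g_s+h_{s,\epsilon})$ produces four cross terms: $g\star g$ is handled exactly as in part a) via Lemma~\ref{con1} i); $h\star h$ is handled via Lemma~\ref{con1} ii), which reproduces $h_{t,\epsilon}$ with only a multiplicative constant (so it does not degrade the $\theta$-parameter); and the two mixed terms $g\star h$ and $h\star g$ must be dominated by a combination of $g_{t,\theta_{k+1}}$ and $h_{t,\epsilon}$ — here one uses that $h_{t,\epsilon}$ has heavier tails than $g_t$ together with the integrability and monotonicity properties assumed for $h$ (e.g. the scaling bound $h(cx)\le c^{-1}h(x)$ and monotonicity of $x^{2\epsilon}h(x)$), so that $(g\star h)(x)\le C h_{t,\epsilon}(x)$ up to the usual Beta-integral in time. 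Collecting the four contributions and the time integral gives \eqref{LZ21} with the same constant $C_{k+1}$ as in a). The main obstacle throughout is purely the bookkeeping: ensuring that the decay-rate parameters $\theta_k$ decrease in a controlled way so that the product $\prod_j(\theta_{j-1}-\theta_j)^{-1}$ in \eqref{Ck} stays summable against $\Gamma(k\delta)^{-1}$ when the series \eqref{Psi} is eventually summed — but for the Lemma itself one only needs the per-step estimate, which the two convolution inequalities of Lemma~\ref{con1} supply directly.
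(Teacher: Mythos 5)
Your plan matches the paper's proof essentially step for step: the same induction on $k$ via $\Phi^{\star(k+1)}=\Phi^{\star k}\star\Phi$, the same change of variables $s=tr$ producing the Beta integral that is normalized away in \eqref{gk-new}, Lemma~\ref{con1}~i) for the $g\star g$ term (after using monotonicity in $\theta$ to put both factors at the same decay rate) and Lemma~\ref{con1}~ii) for $h\star h$, and the same four-term expansion with the mixed terms controlled by the assumptions a), b) on $h$. The only loose phrasing is the parenthetical claim $(g\star h)(x)\le C h_{t,\epsilon}(x)$: as in the paper's estimate of $I_2$, the region $|x-y-z|>|x-y|/2$ actually contributes a $g_{t,\theta_k}$ term, so the mixed terms are bounded by a sum of both kernels --- which you correctly state one line earlier, so this is a matter of wording rather than a gap.
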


\begin{proof}  a) We use induction.  Under \eqref{lk0}  and  \eqref{ph1} we have
    \begin{equation}\label{con20}
    \begin{split}
     \big|\Phi^{\star k}_t(x,y)\big|&\leq C C_{k-1} \int_0^t \int_\real (t-s)^{-1+(k-1)\delta} s^{-1+\delta}
\\
&\quad \cdot  \big(g_{t-s}^{(k-1)} * G_{t-s}^{ (k-1)}\big)(x-z)\big(g_s^{(1)}* G_s\big)(z-y)dz \, ds\\
 &\leq  C C_{k-1} t^{k \delta} \int_0^t \int_\real (t-s)^{-1+(k-1)\delta} s^{-1+\delta}\\
 &\cdot \big(g_{t-s,\theta_{k-1}}* G_{t-s}^{(k-1)} \big)(x-z)\big(g_{s,\theta_{k-1}}* G_s\big)(z-y)dz \, ds.
\end{split}
    \end{equation}
where in the last line we used the monotonicity of $g_t(x)$ in $x$.

 Using Lemma~\ref{con1}  we get
$$
\big( g_{t,\theta_{k-1}}* g_{t,\theta_{k-1}}\big)(z)\leq C_0 \big(\theta_{k-1}-\theta_k\big)^{-1} g_{t,\theta_k}(z),
$$
where $C_0>0$ is some constant, and $\theta_k\in (0,\theta_{k-1})$.

 Therefore, making the change of variables, we derive
\begin{equation}
\begin{split}
\big|\Phi_t^{\star k}(x,y)|&
\leq   C C_{k-1} C_0 (\theta_{k-1}-\theta_k)^{-1} t^{1+2k\delta}  \int_\real  g_{t,\theta_{k}}(x-y-w_1-w_2)\\
&\cdot \Big[
\int_0^1 \int_\real (1-r)^{-1+(k-1)\delta} r^{-1+\delta} G^{ (k-1)}_{t(1-r)} (dw_1) G_{tr}(dw_2)dr\Big]
\\&
=C C_0 (\theta_{k-1}-\theta_k)^{-1} B((k-1)\delta, \delta) C_{k-1} t^{1+k\delta}  \big( g_t^{(k)} *  G_t^{( k)}\big)(y-x),
\end{split}
\end{equation}
where
$$
C_k:= C_0C (\theta_{k-1}-\theta_k)^{-1} B((k-1)\delta, \delta) C_{k-1}.
$$
By induction, we obtain the expression for  $C_k$ as in \eqref{Ck}.

\medskip

b)   We have:
\begin{align*}
\big|\Phi_t^{\star k}(x,y)| & \leq  C_{k-1}C \Big\{\int_0^t\int_\real  (t-s)^{-1+(k-1)\delta}s^{-1+\delta} g_{t-s,\theta_{k-1}}(x-z)  g_{s}(z-y)  \,dzds
\\
\quad &+ \int_0^t\int_\real  (t-s)^{-1+(k-1)\delta}s^{-1+\delta} g_{t-s,\theta_{k-1}}(x-z)   h_{s,\epsilon}(z-y)\,dzds\\
\quad &+ \int_0^t\int_\real  (t-s)^{-1+(k-1)\delta}s^{-1+\delta} h_{t-s,\epsilon}(x-z) g_{s,\theta_{k-1}}(z-y)   \,dzds
\\
\quad &+  \int_0^t\int_\real  (t-s)^{-1+(k-1)\delta}s^{-1+\delta} h_{t-s,\epsilon}(x-z) h_{s,\epsilon}(z-y)\,dzds\Big\}
\\&
= CC_{k-1} (I_1+I_2+I_3+I_4).
\end{align*}
 The term  $I_1$ can be estimated in the same way as in part a) of the lemma.  Namely, by Lemma~\ref{con1} we get:
\begin{equation}
\begin{split}
I_1&\leq c (\theta_{k-1}-\theta_k)^{-1}  g_{t,\theta_{k}} (x-y) \int_0^t (t-s)^{-1+(k-1)\delta}s^{-1+\delta}ds\\
& \leq c(\theta_{k-1}-\theta_k)^{-1}  B((k-1)\delta,\delta) t^{-1+k\delta}g_{t,\theta_{k}} (x-y),
\end{split}
\end{equation}
where $\theta_{k}\in (0,\theta_{k-1})$.   Using Lemma~\ref{con1}.ii) we get
\begin{align*}
I_4&\leq c h_{t,\epsilon} (x-y) \int_0^{t} (t-s)^{-1+(k-1)\delta} s^{-\delta}ds
\leq    c B((k-1)\delta,\delta)  t^{-1+k\delta}  h_{t,\epsilon} (x-y).
\end{align*}
Next we estimate $I_2$, the estimate for $I_3$ can be obtained in the same way.

Let us estimate the inner integral in  $I_2$. Suppose first that $0<s<t/2$.  We use similar argument as in the proof of Lemma~\ref{con1}.ii).
Suppose  that $|x-y-z|> |x-y|/2$.  Then
\begin{align*}
\int_{|x-y-z|> |x-y|/2}  g_{t-s,\theta_{k-1}}(x-y-z)h_{s,\epsilon}(z-y)dy& \leq c_1 g_{t,\theta_{k-1}}((x-y)/2)\int_\real h_{s,\epsilon}(z)dz \\
&\leq c_2 g_{t,\theta_k}(x-y),
\end{align*}
where $\theta_k<\theta_{k-1}/2$.
Further, since the inequality $|x-y-z|\leq |x-y|/2$ implies $|z|\geq |x-y|/2$, we get
\begin{align*}
\int_{|x-y-z|\leq  |w|/2}  g_{t-s,\theta_{k-1}}(x-y-z)h_{s,\epsilon}(z)dy& \leq h_{s,\epsilon}(|x-y|/2)   \int_\real g_{t-s,\theta_{k-1}}(z)dz \\
&\leq c_1 \theta_{k-1}^{-1} h_{t,\epsilon}(x-y),
\end{align*}
where for the last inequality we used assumptions a) and b)  on of Theorem~\ref{t-jump3}.  Suppose now that $t/2\leq s\leq t$. By monotonicity of $\rho_t$, the inequality $\rho_{t-s}|z|\leq 1$ implies $\rho_s|z|\leq 1$, and $\rho_s |x-z|\leq \rho_s |x|+1$. Then by monotonicity of $h$ we get
$$
\int_{\rho_{t-s}|z|\leq 1}  g_{t-s,\theta_{k-1}}(z)h_{s,\epsilon}(x-z)dz\leq c_1 h_s (x)\int_\real g_{t-s,\theta_{k-1}}(z) dz\leq c_2 \theta_{k-1}^{-1}h_t(x).
$$
Note that in the domain $\rho_{t-s}|z|\geq 1$ we have
$g_{t-s,\theta_{k-1}} (z)\leq c_3 h_{s,\epsilon}$. Then
\begin{align*}
\int_{\rho_{t-s}|z|> 1}  g_{t-s,\theta_{k-1}}(z)h_{s,\epsilon}(x-z)dz&\leq \int_\real h_{t-s,\epsilon}(z)h_{s,\epsilon}(x-z)dz\leq c_4 h_{t,\epsilon}(x).
\end{align*}

Substituting the above estimates in $I_2$, we get
$$
I_2\leq  c B((k-1)\delta,\delta)  t^{-1+k\delta} \Big[ \theta_{k-1}^{-1}  h_{t,\epsilon} (x-y)+ g_{t,\theta_k}(x-y)\Big].
$$
Thus, we obtain
\begin{align*}
\big|\Phi_t^{\star k}(x,y)| &\leq C_k   t^{-1+k\delta} \Big[g_{t,\theta_k}(x-y)+ h_{t,\epsilon} (x-y)\Big]
\end{align*}
with $C_k= 4c C C_{k-1} (\theta_{k-1}-\theta_k)^{-1}B((k-1)\delta, \delta)$. By induction, we can write $C_k$ as  in \eqref{Ck}.
\end{proof}

Note that estimates \eqref{lk0} and \eqref{LZ21} are still not sufficient for proving the convergence of the series  $\sum_{k=1}^\infty \Phi_t(x,y)$, because constants $C_k$ depend on $k$ in a rather complicated way;  for example, if we chose $\theta_k=\frac{1}{2}+\frac{1}{2k}$, it can be shown that $C_k\to\infty$ as $k\to \infty$. In order to overcome this problem, let us look more closely on the the right-hand side of \eqref{lk0} and \eqref{LZ21}, respectively.

Take
\begin{equation}\label{k0}
k_0:= \Big[\frac{n}{\alpha \delta}\Big]+1.
 \end{equation}
 For such $k_0$ we  have $t^{k_0 \delta}\rho_t^n \leq c $ for all $t\in [0,1]$. Then
 $$
 \Big(g_{t-s}^{(k_0)} * g_s^{(1)}\Big)(x)\leq  c(k_0) M_1 \rho_t^{-1} g_{t,\zeta}(z),
 $$
 where $\zeta=c\theta_{k_0}$,  and
\begin{equation}\label{M1}
 M:= \int_\real e^{-c (1-\theta_{k_0}) |z|}dz.
 \end{equation}
 Therefore, we obtain the following lemma.

 \begin{lemma}\label{es-new} Let $k_0$ be given by \eqref{k0}.
 \begin{itemize}
 \item[I.] Under conditions of Theorem~\ref{t-main1},
 \begin{equation}\label{LZkl}
\big| \Phi_t^{\star (k_0+\ell)}(x,y) \big|\leq  D_\ell  t^{-1+ \delta (k_0+\ell)}\rho_t^{-1} \big(g_{t,\zeta} * G_t^{(k_0+\ell)} \big) (y-x), \quad \ell\geq 1,
\end{equation}
where $C(k_0)$ is some constant, the family of probability measures $\{ G_t^{(k)},\, t>0, \, k\geq 1\}$ is defined in \eqref{gk-new},
\begin{equation}\label{Dl}
D_\ell :=  C(k_0) (C M)^\ell B((k_0+\ell -1)\delta, \delta),
\end{equation}
where  $M>0$ is given by \eqref{M1}, and $C>0$ is the constant, appearing in  \eqref{ph11}.

\item[II.] Under conditions of Theorem~\ref{t-jump3},
\begin{equation}\label{LZkl}
\big| \Phi_t^{\star (k_0+\ell)}(x,y) \big|\leq  D_\ell  t^{-1+ \delta (k_0+\ell)}  \rho_t^{-1} \big[ g_{t,\zeta} (y-x)+ h_{t,\epsilon}(y-x)\big],   \quad \ell\geq 1,
\end{equation}
where $D_\ell$ is given by \eqref{Dl} with
\begin{equation}\label{M2}
M:= \max\Big( \int_\real e^{-c (1-\theta_{k_0}) |z|}dz, \int_\real h_{\epsilon}(z)dz\Big),
 \end{equation}
and  $C>0$ is the constant, appearing in  \eqref{ld101}.
\end{itemize}

\end{lemma}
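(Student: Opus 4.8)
The plan is to iterate the convolution bound from Lemma~\ref{gen} one more time, but now splitting off the index $k_0$ from \eqref{k0} in order to kill the dangerous $\rho_t$-factor. First I would write, for $\ell\ge 1$,
\[
\Phi_t^{\star(k_0+\ell)}=\Phi^{\star k_0}\star\Phi^{\star\ell},
\]
and insert the estimate \eqref{lk0} for $\Phi^{\star k_0}$ together with \eqref{ph11} applied $\ell$ times (or, equivalently, \eqref{lk0} for $\Phi^{\star\ell}$). The crucial point is that in the resulting space convolution we meet a factor $g_{t-s}^{(k_0)}*g_s^{(1)}$, and by definition $g_t^{(k_0)}(x)=t^{k_0\delta}g_{t,\theta_{k_0}}(x)$, so this product carries the prefactor $t^{k_0\delta}\rho_t$; by the choice \eqref{k0} of $k_0$ we have $t^{k_0\delta}\rho_t^{\,n}\le c$ for $t\in(0,1]$, which lets us absorb one power $\rho_t^{-1}$ out front and leaves a genuinely integrable Gaussian-type tail whose spatial mass is the constant $M$ from \eqref{M1}. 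Thus each further convolution step contributes only a factor $CM$ (from the $L^1$-norm of the kernel and the constant in \eqref{ph11}) and a Beta-function factor $B((k_0+\ell-1)\delta,\delta)$ from the time integral $\int_0^t(t-s)^{-1+(k_0+\ell-1)\delta}s^{-1+\delta}\,ds$, giving exactly $D_\ell$ in \eqref{Dl}.

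Concretely, the steps are: (1) fix $k_0$ by \eqref{k0}; (2) for part I, apply Lemma~\ref{gen}.a) with $k=k_0$ and then convolve with $\Phi$ a total of $\ell$ times, at each step using monotonicity of $g_{t,\theta}$ in $x$ (as in \eqref{con20}) to pull the $g$-kernels together, Lemma~\ref{con1}.i) to collapse $g_{t,\zeta}*g_{t,\zeta}\le C_0 g_{t,\zeta}$ up to a fixed $\theta$-loss, and the semigroup-type identity \eqref{gk-new} to fold the probability measures $G_{t(1-r)}^{(\cdot)}*G_{tr}$ into a single $G_t^{(k_0+\ell)}$; (3) bound the $t$-integral by the Beta function; (4) combine to read off \eqref{LZkl} with $D_\ell$ as claimed. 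For part II the argument is identical except that one runs the four-term splitting $I_1+I_2+I_3+I_4$ of Lemma~\ref{gen}.b): the $g*g$ term behaves as above with mass $\int e^{-c(1-\theta_{k_0})|z|}dz$, while the terms involving $h_{\cdot,\epsilon}$ are controlled by Lemma~\ref{con1}.ii) and assumptions a), b) of Theorem~\ref{t-jump3}, producing the second mass $\int h_\epsilon(z)\,dz$ and hence the maximum \eqref{M2}.

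The main obstacle I anticipate is bookkeeping rather than conceptual: one has to make sure that when the index $k_0$ is peeled off, the $\theta$-parameters no longer need to be decreased at every step (otherwise the product $\prod(\theta_{k-1}-\theta_k)^{-1}$ in \eqref{Ck} would still blow up). This works precisely because, once we have spent one power of $\rho_t$ against $t^{k_0\delta}$, the surviving kernel $g_{t,\zeta}$ already has finite spatial mass, so Lemma~\ref{con1}.i) is invoked only to re-collapse $g_{t,\zeta}*g_{t,\zeta}$ back to $g_{t,\zeta}$ with the \emph{same} fixed exponent $\zeta=c\theta_{k_0}$, at the cost of a single fixed constant $C_0$ absorbed into $C(k_0)$ — no shrinking sequence is needed. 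Verifying that this single fixed $\theta$-loss is legitimate (i.e.\ that Lemma~\ref{con1}.i) applied with $\theta$ bounded away from $1$ gives a $\theta$-independent constant once we accept a fixed decrease) is the delicate point; after that, the growth of $D_\ell$ is governed solely by $(CM)^\ell B((k_0+\ell-1)\delta,\delta)$, which is summable in $\ell$ and will give convergence of the parametrix series in the next step.
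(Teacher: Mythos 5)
Your skeleton coincides with the paper's (which itself only sketches this lemma): peel off $k_0$ factors, use $t^{k_0\delta}\rho_t\le c$ from the choice \eqref{k0} to extract $\rho_t^{-1}$, then add one $\Phi$ at a time so that each step costs a fixed factor $CM$ plus a Beta integral, folding the measures via \eqref{gk-new}. The gap is in the one step you yourself call delicate. The inequality $g_{t,\zeta}*g_{t,\zeta}\le C_0\,g_{t,\zeta}$ with the \emph{same} exponent $\zeta$ is false, and Lemma~\ref{con1}.i) does not provide it: that lemma returns $g_{t,\theta\zeta}$ with $\theta<1$ and a constant of order $(1-\theta)^{-1}$, and the degeneration as $\theta\to1$ is not an artifact. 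Writing $D(y)=y\ln(1+y)$, one has $D(a)+D(b)-D(a+b)\approx -R\ln 2$ at $a=b=R/2$, so $D$ is not subadditive up to an additive constant and the convolution of $e^{-\zeta cD(\rho_t|\cdot|)}$ with itself exceeds $e^{-\zeta cD(\rho_t|x|)}$ by an unbounded factor for large $\rho_t|x|$. If you instead apply Lemma~\ref{con1}.i) honestly with some fixed $\theta<1$ at each of the $\ell$ steps, the exponent degrades to $\theta^{\ell}\zeta\to0$, which is exactly the shrinking-sequence problem the lemma is designed to avoid; "accepting a fixed decrease" once does not help, because the decrease recurs at every step.

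What actually stabilises the exponent — and what the specific form $M=\int_\real e^{-c(1-\theta_{k_0})|z|}dz$ in \eqref{M1} is encoding — is that the induction step never convolves $g_{\cdot,\zeta}$ with itself. It convolves the \emph{accumulated} kernel $g_{t-s,\zeta}$ (exponent $\approx\theta_{k_0}$) with the \emph{fresh} kernel $g_{s}=g_{s,1}$ coming from \eqref{ph11}, whose exponent is the full $1$. Splitting the fresh exponent as $\zeta+(1-\zeta)$ and applying the convexity inequality for $D$ to the $\zeta$-part, the surplus $(1-\zeta)$ is what gets integrated in $z$ and produces the mass $M$, while the output retains the exponent $\zeta$; together with $\rho_{t-s}\ge\rho_t$ this yields the per-step inequality $\big(\rho_{t-s}^{-1}g_{t-s,\zeta}*g_{s}\big)(x)\le c\,M\,\rho_t^{-1}g_{t,\zeta}(x)$ that the paper states and that the induction actually needs (your parenthetical alternative of inserting \eqref{lk0} for $\Phi^{\star\ell}$ directly would reimport the divergent product $\prod(\theta_{j-1}-\theta_j)^{-1}$ and must be discarded). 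With this correction, the rest of your bookkeeping — the measures $G_t^{(k_0+\ell)}$, the Beta factor giving \eqref{Dl}, and for part II the four-term splitting with the second mass $\int_\real h_\epsilon(z)\,dz$ in \eqref{M2} — goes through as in the paper.
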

\begin{proof}
The proof is obtained by induction in the same manner as the proof of Lemma~\ref{gen}; we only need to use the inequalities
$$
 \Big(\rho_{t-s}^{-1}g_{t-s}^{(k_0)} * g_s^{(1)}\Big)(x)\leq  c(k_0) M \rho_t^{-1} g_{t,\zeta}(z),
 $$
$$
 \Big(\rho_{t-s}^{-1}h_{t-s,\epsilon} * h_{s,\epsilon}\Big)(x)\leq  c(k_0) M \rho_t^{-1} h_{t,\epsilon}(z),
 $$
where the constant $M$ is given by \eqref{M1} or \eqref{M2}, respectively. We omit the details.
\end{proof}

\subsection{Estimation of $\Phi_t(x,y)$}\label{t21}

In this section we derive the upper bound on $\Phi_t(x,y)$ under conditions of Theorem~\ref{t-main1} and
\ref{t-jump3}, respectively.

Put
\begin{equation}
\Lambda_t(du):=t  \mu(du)1_{|\rho_tu|>1}, \label{lam}
\end{equation}
and   define the measure
\begin{equation}
    P_t(dw) :=e^{-\Lambda_t(\real)} \sum_{k=0}^\infty \frac{1}{k!} \Lambda_t^{*k}(dw). \label{Poist}
    \end{equation}
Note that $\Lambda_t(\real)\leq t q^U(\rho_t)=1$.

For some  $0<\epsilon<\alpha$ define
\begin{equation}
\chi_{t,\epsilon}(du): = \rho_t^\epsilon \big(|u|^\epsilon\wedge
1\big) \Lambda_t(du), \label{chi1}
\end{equation}
\begin{equation}
G_{t}(du):= c_0\big( P_t(du)+ (P_t*
\chi_{t,\epsilon})(du)\big).\label{G1}
\end{equation}
Here $c_0>0$ is the normalizing constant, chosen in such a way that $G_t(\real)=1$.
\begin{lemma}\label{Phi-up}
Under conditions of Theorem~\ref{t-main1}  we have
\begin{equation} \label{ph1}
\big|   \Phi_t(x,y)\big|\leq C t^{-1+\eta} \big(g_{t} *
G_{t}\big)(y-x),
\end{equation}
where  $C>0$ is some constant, $g_{t}$ is of the form \eqref{gtc} with some constant $c>0$,  and  $\{G_t(du), t\geq 0\}$ is the  family of probability measures, given by \eqref{G1}, and $\eta=\epsilon/\sigma$.
\end{lemma}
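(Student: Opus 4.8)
The starting point is the explicit representation
$$
\Phi_t(x,y)=\mathfrak{L}(x,D)p_t^0(x,y)=\int_\real\big(p_t(y-x-u)-p_t(y-x)\big)m(x,u)\,\mu(du),
$$
which I would split into a \emph{large-jump} part, integrated over $\{|\rho_t u|>1\}$, and a \emph{small-jump} part, integrated over $\{|\rho_t u|\le 1\}$. Two facts from \cite{KK12a} are used as a black box: that $p_t\in C^2(\real)$ for each $t>0$, and the bounds $p_t(x)\le C(g_t*P_t)(x)$ and $|p_t''(x)|\le C\rho_t^2(g_t*P_t)(x)$, with $g_t$ of the form \eqref{gtc} (the constant $c$ in the two estimates need not coincide) and $P_t$ as in \eqref{Poist}. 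I would also record the elementary bound
$$
\chi_{t,\epsilon}(\real)=t\rho_t^\epsilon\int_{|\rho_t u|>1}(|u|^\epsilon\wedge 1)\,\mu(du)\le C,\qquad t\in(0,1],
$$
obtained from $\rho_t^\epsilon(|u|^\epsilon\wedge 1)\le(\rho_t u)^2$ on $\{1/\rho_t<|u|\le 1\}$, which makes that contribution $\le tq^U(\rho_t)=1$, together with $t\rho_t^\epsilon\mu(\{|u|>1\})\le Ct^{1-\epsilon/\alpha}\le C$ by \eqref{growth2} and $\epsilon<\alpha$. In particular $G_t$ in \eqref{G1} is a genuine probability measure whose normalizing constant $c_0$ is bounded away from $0$ uniformly in $t$, so that $P_t\le c_0^{-1}G_t$ and $P_t*\chi_{t,\epsilon}\le c_0^{-1}G_t$.

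For the large-jump part I would estimate $|p_t(y-x-u)-p_t(y-x)|\le p_t(y-x-u)+p_t(y-x)$ and $m(x,u)\le c(1\wedge|u|^\epsilon)=c\rho_t^{-\epsilon}\rho_t^\epsilon(1\wedge|u|^\epsilon)$, and then use $\mu(du)1_{|\rho_t u|>1}=t^{-1}\Lambda_t(du)$ together with the definition \eqref{chi1} of $\chi_{t,\epsilon}$. The two resulting terms are $c\rho_t^{-\epsilon}t^{-1}\chi_{t,\epsilon}(\real)\,p_t(y-x)$ and $c\rho_t^{-\epsilon}t^{-1}(p_t*\chi_{t,\epsilon})(y-x)$; substituting $p_t\le C(g_t*P_t)$ and invoking $\chi_{t,\epsilon}(\real)\le C$, $P_t*\chi_{t,\epsilon}\le c_0^{-1}G_t$ and $\rho_t^{-\epsilon}\le Ct^{\epsilon/\sigma}=Ct^\eta$ (a consequence of \eqref{growth3}), both are bounded by $Ct^{-1+\eta}(g_t*G_t)(y-x)$.

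For the small-jump part I would symmetrize, using that $\mu$ and $m(x,\cdot)$ are even (assumption \textbf{A2} and the symmetry of $\mu$): the integral over $\{|\rho_t u|\le 1\}$ equals
$$
\tfrac12\int_{|\rho_t u|\le 1}\big(p_t(y-x+u)+p_t(y-x-u)-2p_t(y-x)\big)m(x,u)\,\mu(du),
$$
and by Taylor's formula the bracket is bounded by $u^2\sup_{|\theta|\le|u|}|p_t''(y-x+\theta)|$. Since $|\rho_t\theta|\le 1$ on the integration set, an elementary estimate on the exponent in \eqref{gtc} yields $g_t(v+\theta)\le C\,g_{t,\theta_0}(v)$ for a fixed $\theta_0\in(0,1)$ whenever $|\rho_t\theta|\le 1$; convolving with $P_t$ and renaming $g_{t,\theta_0}$ as $g_t$ (still of the form \eqref{gtc}) gives $\sup_{|\theta|\le|u|}|p_t''(y-x+\theta)|\le C\rho_t^2(g_t*P_t)(y-x)$. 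Combining this with $(1\wedge|u|^\epsilon)\le\rho_t^{-\epsilon}(\rho_t|u|)^\epsilon\le\rho_t^{-\epsilon}$ and $\int_{|\rho_t u|\le 1}u^2\,\mu(du)\le q^U(\rho_t)/\rho_t^2=1/(t\rho_t^2)$ on that set, we get $\int_{|\rho_t u|\le 1}u^2m(x,u)\,\mu(du)\le c\rho_t^{-\epsilon}(t\rho_t^2)^{-1}$, hence the small-jump part is at most $C\rho_t^{-\epsilon}t^{-1}(g_t*P_t)(y-x)\le Ct^{-1+\eta}(g_t*G_t)(y-x)$, again by $\rho_t^{-\epsilon}\le Ct^\eta$ and $P_t\le c_0^{-1}G_t$. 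Adding the two parts (passing, if necessary, to the smaller of the two exponential constants) gives \eqref{ph1}.

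The substantive input is the pair of pointwise bounds on $p_t$ and $p_t''$ furnished by \cite{KK12a}; with those in hand the remaining work is a careful, but essentially routine, bookkeeping of the powers of $\rho_t$ and $t$. The only genuinely delicate elementary point is absorbing the Taylor shift $|\rho_t\theta|\le 1$ into a kernel of the same form \eqref{gtc} with a smaller constant $c$ — which is exactly why the statement is formulated with $g_t$ of the form \eqref{gtc} rather than with one fixed kernel.
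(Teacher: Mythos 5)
Your overall route is the same as the paper's: split at $|\rho_t u|=1$, use the bounds on $p_t$ and $p_t''$ from \cite{KK12a}, absorb the small Taylor shift into a kernel $g_{t,\vartheta}$ of the form \eqref{gtc} with a smaller constant (this is exactly the paper's estimate \eqref{V1}), and track the powers $\rho_t^{-\epsilon}\le Ct^{\epsilon/\sigma}$ to produce $t^{-1+\eta}$. The small-jump and large-jump bookkeeping, and the identification of $G_t$ via \eqref{chi1}--\eqref{G1}, are all fine.

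The genuine gap is your justification of $\chi_{t,\epsilon}(\real)\le C$ uniformly in $t$, which your large-jump estimate (and the uniform lower bound on the normalizing constant $c_0$) relies on. For the contribution of the region $\{1/\rho_t<|u|\le 1\}$ you majorize $\rho_t^\epsilon|u|^\epsilon\le(\rho_t u)^2$ and claim the result is $\le t\,q^U(\rho_t)=1$. But $q^U(\xi)=\int\big((\xi u)^2\wedge 1\big)\mu(du)$, and on this region $(\rho_t u)^2>1$, so $\int_{1/\rho_t<|u|\le 1}(\rho_t u)^2\mu(du)$ is \emph{not} controlled by $q^U(\rho_t)$; the crude majorization destroys the boundedness. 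Concretely, for the $\alpha$-stable measure $\mu(du)=c|u|^{-1-\alpha}du$ one has $t\rho_t^2\int_{1/\rho_t}^1 u^{1-\alpha}du\asymp t\rho_t^2\asymp t^{1-2/\alpha}\to\infty$ as $t\to0$ whenever $\alpha<2$, even though $t\rho_t^\epsilon\int_{1/\rho_t}^1 u^\epsilon\mu(du)$ itself stays bounded. Proving that boundedness is precisely the nontrivial point: the paper integrates by parts to get $t\rho_t^\epsilon\int_{1/\rho_t}^1u^\epsilon\mu(du)\le 2t\epsilon\rho_t^\epsilon\int_1^{\rho_t}q^U(s)s^{-1-\epsilon}ds$, and then uses $(q^U)'(s)=2s^{-1}q^L(s)$ together with condition \textbf{A1} (via l'Hospital) to obtain $\int_1^r q^U(s)s^{-1-\epsilon}ds\le \tfrac{C}{\alpha-\epsilon}\,r^{-\epsilon}q^U(r)$, whence the expression is $\le C\,t\,q^U(\rho_t)=C$. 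This is where the hypotheses $\epsilon<\alpha$ and \textbf{A1} enter essentially; your proposal uses $\epsilon<\alpha$ only for the piece $|u|>1$ and bypasses \textbf{A1} altogether, which cannot work. Once this step is repaired (e.g.\ by the paper's argument), the rest of your proof goes through.
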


\begin{lemma}\label{Phi-up2}
Under conditions of Theorem~\eqref{t-jump3}, we have
\begin{equation}\label{ld10}
\big|\Phi_t(x,y)\big|\leq C t^{-1+\eta}\big( g_{t}(x-y) + h_{t,\epsilon}(x-y)\big),
\end{equation}
where $C>0$ is some constant, $\eta=\epsilon/\sigma$, and  $g_t$ is of the form  \eqref{gtc} with some  constant $c>0$.
\end{lemma}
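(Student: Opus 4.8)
The plan is to estimate $\Phi_t(x,y) = \mathfrak{L}(x,D)p_t^0(x,y)$ directly from the definition, splitting the integral over $u$ according to whether $|\rho_t u| \le 1$ or $|\rho_t u| > 1$. Recall $\mathfrak L(x,D)p_t^0(x,y) = \int_\real \bigl(p_t(y-x+u) - p_t(y-x)\bigr) m(x,u)\mu(du)$. First I would collect the pointwise bounds on $p_t$ and its derivatives that follow from assumption \textbf{A1} and the representation \eqref{ptx}: namely $|p_t(z)| \le C\rho_t f_{up}(\rho_t z)$, $|p_t'(z)| \le C\rho_t^2 |f_{up}'|(\rho_t z)$-type bounds, and the scaling $|\partial_t p_t(z)| \le Ct^{-1}\rho_t f_{up}(\rho_t z)$; these are the ``single-kernel'' estimates for the free process that are known from \cite{KK12a} and hold here because \textbf{A1} controls $q$ from above and below by comparable power-type functions via \eqref{psipm1}. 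Crucially, for the Lévy-type operator applied to a function, the clean object is $L(D)p_t^0 = \partial_t p_t^0$, so for the perturbation one should exploit that $m(x,u)\mu(du) \le c(1\wedge|u|^\eps)\mu(du)$ pointwise (assumption \textbf{A3}) and compare with the full generator.

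For the ``small jumps'' part $|\rho_t u| \le 1$ I would Taylor-expand $p_t(y-x+u) - p_t(y-x)$; by symmetry of $m(x,u)\mu(du)$ the first-order term cancels after symmetrizing, leaving a second-order term controlled by $\sup|p_t''|$ times $\int_{|\rho_t u|\le 1} u^2 m(x,u)\mu(du) \le c\rho_t^{-\eps}\cdot\bigl(\text{something}\bigr)$; using $|u|^\eps \le \rho_t^{-\eps}$ on this range and $\int_{|\rho_t u|\le 1}(\rho_t u)^2\mu(du) = q^L(\rho_t) \asymp q^U(\rho_t) = 1/t$, one gets a factor $t^{-1}\rho_t^{-\eps} \cdot \rho_t^\eps \cdot(\dots)$; I expect the $|u|^\eps$ gain from \textbf{A3} to convert one power of $\rho_t$ into $t^{-\eta}$ via $\rho_t^{-\eps} \le Ct^{\eps/\sigma} = Ct^\eta$ (this is exactly the content of the definition of $\sigma$ preceding \eqref{growth3}), yielding the $t^{-1+\eta}$ prefactor multiplying $g_t(y-x)$. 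For the ``large jumps'' part $|\rho_t u| > 1$ I would not expand but bound $|p_t(y-x+u)-p_t(y-x)| \le |p_t|(y-x+u) + |p_t|(y-x)$, use $|p_t| \le C\rho_t f_{up}(\rho_t\,\cdot)$, bound $m(x,u) \le c$, and integrate against $\mu(du)1_{|\rho_t u|>1}$; here is where the hypotheses of Theorem~\ref{t-jump3} enter — condition I or II gives $t\mu(\{|\rho_t u|>v\}) \le C(1-\mathfrak G(v))$ or the density bound \eqref{dens01}, so the convolution $p_t * (t\mu\,1_{|\rho_t\cdot|>1})$ is dominated by $\rho_t$ times a convolution of $f_{up}(\rho_t\cdot)$ with $h(\rho_t\cdot)$, and properties a), b) of $h$ together with $h\in\mathcal L$ let me absorb the $f_{up}$-convolution and bound the result by $C(g_t(x-y) + h_{t,\eps}(x-y))$, with the extra $|u|^\eps$ from \textbf{A3} producing the $h_{t,\eps}$ term (recall $h_{t,\eps}(x) = \rho_t |x\rho_t|^\eps h(|x|\rho_t)$) and again the $t^{-1+\eta}$ prefactor.

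The main obstacle will be the large-jump term: one must verify that convolving the free kernel $\rho_t f_{up}(\rho_t\cdot)$ — which has fast (exponential-times-$\log$) decay — against the heavy tail $h(\rho_t\cdot)$ coming from $\mu$ does not worsen the tail, i.e. that $f_{up}(\rho_t\cdot) * h(\rho_t\cdot) \le C(f_{up}(\rho_t\cdot) + h(\rho_t\cdot))$ up to constants. This is precisely where $h\in\mathcal L$ (so shifting the argument of $h$ by a bounded amount is harmless), the scaling inequality a) $h(cx)\le c^{-1}h(x)$, and the doubling inequality b) $h(x)\le ch(2x)$ are used: splitting the convolution integral into $|z| > |x|/2$ and $|z| \le |x|/2$ as in the proof of Lemma~\ref{con1}.ii, on the first region $f_{up}$ is already as small as $f_{up}(x/2)$ and $h$ integrates to a constant, while on the second region $h(x-z)$ is comparable to $h(x)$ and $f_{up}$ integrates to a constant. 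Bookkeeping the powers of $\rho_t$ and $t$ consistently through both regions, and checking that the monotonicity of $x^{2\eps}h(x)$ is exactly what is needed to control the $h_{t,\eps}$ contribution, is the delicate part; the rest is the routine Taylor estimate plus the scaling relation $\rho_t^{-\eps}\le Ct^\eta$.
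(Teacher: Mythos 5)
Your plan is essentially sound and its mathematical content coincides with what the paper does, but the paper organizes it differently: it does not re-run the small-jump/large-jump computation for this lemma. Instead it recycles the compound-kernel bound of Lemma~\ref{Phi-up}, $|\Phi_t(x,y)|\le Ct^{-1+\eta}(g_t*G_t)(y-x)$ with $G_t$ built from $P_t$ and $\chi_{t,\epsilon}$, and converts it into the two-term bound by (i) checking that the hypotheses on $h$ ($h\in\mathcal{L}$, monotonicity of $x^{2\epsilon}h(x)$, a), b)) make $\mathfrak{G}$ resp.\ $\mathfrak{g}$ sub-exponential in the sense of Definition~\ref{sub}, (ii) proving the tail estimate $\chi_{t,\epsilon}\{u:|u\rho_t|\ge v\}\le ch_\epsilon(v)$, and (iii) invoking Lemma~\ref{aux2} and sub-exponentiality of $h$, $h_\epsilon$ to get $g_{t,\vartheta}*(P_t*\chi_{t,\epsilon})\lesssim g_{t,\vartheta}+\rho_t h_\epsilon(\rho_t\,\cdot)$. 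Your direct route (Taylor expansion with symmetry on $\{|\rho_t u|\le 1\}$, the bookkeeping $q^L(\rho_t)\asymp q^U(\rho_t)=1/t$ and $\rho_t^{-\epsilon}\le Ct^{\epsilon/\sigma}$, and the hands-on convolution splitting $|z|>|x|/2$ vs.\ $|z|\le|x|/2$ for the heavy tail) reproduces the proof of Lemma~\ref{Phi-up} merged with this conversion, and the convolution inequality $f_{up}(\rho_t\cdot)*h(\rho_t\cdot)\lesssim f_{up}(\rho_t\cdot)+h(\rho_t\cdot)$ that you single out as the main obstacle is exactly what the sub-exponentiality framework delivers; so the two arguments buy the same thing, yours being more self-contained, the paper's shorter because it quotes \cite{KK12a}.

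One statement in your plan needs correction, though it does not derail the argument. You assert single-kernel bounds $|p_t(z)|\le C\rho_t f_{up}(\rho_t z)$ (and the analogous derivative bounds) ``because \textbf{A1} controls $q$''. This is false under \textbf{A1} alone: the paper stresses (cf.\ \eqref{bell} and Lemma~\ref{aux1}) that one only gets the compound bound $\rho_t^k\,(f_{up}(\rho_t\cdot)*P_t)$, with the convolution measure $P_t$ genuinely present. What rescues you in the setting of this lemma is Lemma~\ref{aux2}: under conditions I/II and the sub-exponentiality of $\mathfrak{G}$ or $\mathfrak{g}$ (which must be verified from $h\in\mathcal{L}$, as the paper does), one has $|\partial_x^kp_t(x)|\le c_k\rho_t^k\bigl(f_{up}(x\rho_t)+h(x\rho_t)\bigr)$ --- note the unavoidable extra $h$ term. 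Using this corrected bound, your small-jump Taylor estimate produces, besides $t^{-1+\eta}g_t(y-x)$, an additional term $t^{-1+\eta}\rho_t h((y-x)\rho_t)$, which is absorbed into $g_t+h_{t,\epsilon}$ (for $|y-x|\rho_t\ge1$ use $h\le h_\epsilon$; for $|y-x|\rho_t\le1$ it is comparable to $\rho_t$, hence to $g_t$, in case I), and similarly the large-jump term carries the extra $h$ through the same convolution analysis. With that adjustment your scheme goes through and yields \eqref{ld10}.
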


The proof relies on a few auxiliary statement from \cite{KK12a}, which we give below.

Define
\begin{equation}\label{ft}
f_t(x):=\int_\real \rho_t f_{up}((x-w)\rho_t ) P_t(dw), \quad x\in \real, \quad t\in (0,1].
\end{equation}

\begin{lemma}[\cite{KK12a}]\label{aux1}
Suppose that the measure $\mu$ satisfies condition \textbf{A1}.   Then the assertions below hold true.
\begin{itemize}
\item[a)]
The L\'evy process $Z_t$ related to $\mu$ by \eqref{char}--\eqref{psi}  admits the transition probability density \eqref{ptx}, which belongs with respect to $x$ to $C_\infty^k(\real)$, $k\geq 0$, and   the derivatives satisfy
\begin{equation}\label{ptx-der}
\Big| \frac{\partial^k}{\partial x^k} p_t (x)\Big|
 \leq   \rho_t^k f_t(x), \quad t\in (0,1],\quad  x\in \real.
\end{equation}
Here $f_t$  is the function of the form \eqref{ft}, $f_{up}$ is of the form \eqref{flu}
with constants $A_k$ and $a_k$ in place of $d_1$ and $d_2$, respectively.

\item[b)] The lower bound holds true:
$$
p_t(x)\geq  \rho_t f_{low}(x\rho_t), \quad  t\in (0,1], \quad x\in \real.
$$
Here $f_{low}$ is the function of the form \eqref{flu}  with some constants $d_3,d_4>0$.
\end{itemize}

\end{lemma}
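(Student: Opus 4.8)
The plan is to reduce everything to an estimate for the density of a L\'evy process whose L\'evy measure is compactly supported, and to get that estimate by a contour shift in the Fourier inversion formula, the size of the shift being tuned to $\rho_t$. First I would split the jumps of $Z$ at the level $\rho_t^{-1}$: write $\mu=\mu'_t+\mu''_t$ with $\mu'_t(du)=\mu(du)1_{|\rho_t u|\le 1}$, $\mu''_t(du)=\mu(du)1_{|\rho_t u|>1}$, so that $Z_t\stackrel{d}{=}Y_t+J_t$ with $Y,J$ independent, $Y$ the L\'evy process with L\'evy measure $\mu'_t$ and $J$ compound Poisson with intensity $\mu''_t(\real)$. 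Note that \textbf{A1} forces $q(\xi)\to\infty$, hence $\mu(\real)=\infty$, hence $\mu$ — and therefore $\mu'_t$ — has infinite mass near the origin; so $Y_t$ carries no atom and has a bounded continuous density $p^Y_t$, while by construction the law of $J_t$ is exactly the measure $P_t$ of \eqref{Poist} (legitimate, since $\Lambda_t(\real)=t\mu''_t(\real)\le tq^U(\rho_t)=1$, using $q^U(\xi)\ge\mu(|u\xi|>1)$). Then $Z_t$ has the density $p_t$ of \eqref{ptx}, represented as $p_t(x)=\int_\real p^Y_t(x-w)P_t(dw)$, and since $P_t$ is a probability measure it suffices to prove $|\partial_x^k p^Y_t(x)|\le A_k\rho_t^{k+1}e^{-a_k\rho_t|x|\ln(1+\rho_t|x|)}$ uniformly in $t\in(0,1]$: differentiating under the integral then yields \eqref{ptx-der} with $f_t$ of the form \eqref{ft}, and also the $C_\infty^k$-regularity.

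To estimate $p^Y_t$ I would start from $\partial_x^k p^Y_t(x)=\tfrac1{2\pi}\int_\real(-i\xi)^k e^{-ix\xi-t\psi_t(\xi)}\,d\xi$ with $\psi_t(\xi)=\int_\real(1-\cos\xi u)\mu'_t(du)$. Two lower bounds on $\psi_t$ are available, both built from \textbf{A1} and from $q^U(\rho_t)=1/t$: for $|\xi|\le\rho_t$ one has $1-\cos\xi u\ge c(\xi u)^2$ on $\supp\mu'_t$, so $\psi_t(\xi)\ge c(\xi/\rho_t)^2q^L(\rho_t)$ with $tq^L(\rho_t)\ge c>0$; and for $|\xi|>\rho_t$ the set $\{|\xi u|\le1\}$ is contained in $\supp\mu'_t$, so $\psi_t(\xi)\ge cq^L(\xi)\ge c\beta^{-1}q^U(\xi)$, which together with the scaling $q^U(\lambda\xi)\ge\lambda^\alpha q^U(\xi)$, $\lambda\ge1$ (the global form of \eqref{growth}, got by integrating $(\log q^U)'(\xi)\ge\alpha/\xi$) gives $t\psi_t(\rho_t\xi)\ge c|\xi|^\alpha$ for $|\xi|>1$ with $c$ not depending on $t$. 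Since $\mu'_t$ is supported in $[-\rho_t^{-1},\rho_t^{-1}]$, $\psi_t$ extends to an entire function of exponential type $\rho_t^{-1}$, and, using $\RE\big(1-\cos((\xi+i\eta)u)\big)\ge(1-\cos\xi u)-(\cosh\eta u-1)$ with $\cosh\eta u-1\le\tfrac12(\eta/\rho_t)^2\cosh(\eta/\rho_t)(\rho_t u)^2$ on $\supp\mu'_t$, one obtains $t\RE\psi_t(\xi\mp i\eta)\ge t\psi_t(\xi)-C_0(\eta/\rho_t)^2\cosh(\eta/\rho_t)$. Shifting the contour onto $\real\mp i\eta\operatorname{sgn}(x)$ (justified because $\psi_t(\xi)\to\infty$ as $|\xi|\to\infty$, locally uniformly in $\eta$) gives
$$
|\partial_x^k p^Y_t(x)|\le\frac{e^{-\eta|x|}}{2\pi}\,e^{C_0(\eta/\rho_t)^2\cosh(\eta/\rho_t)}\int_\real(|\xi|+\eta)^k e^{-t\psi_t(\xi)}\,d\xi,
$$
and, after rescaling $\xi\mapsto\rho_t\xi$, the last integral is $\le C\rho_t(\rho_t+\eta)^k$ uniformly in $t$ by the two bounds above. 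The hard part is the optimisation that follows: taking $\eta=\rho_t\ln(1+\rho_t|x|)$ and optimising $-\eta|x|+C_0(\eta/\rho_t)^2\cosh(\eta/\rho_t)$ over the free constant, the exponent becomes $\asymp-\rho_t|x|\ln(1+\rho_t|x|)$, which absorbs the polynomial factor $(\rho_t+\eta)^k$ at the price of a smaller $a_k$. It is precisely the $\cosh$ — i.e.\ the exponential type of $\psi_t$ — that produces the logarithmic correction in \eqref{flu}, and the whole bound is uniform in $t\in(0,1]$ only because \textbf{A1} gives two-sided control of $q^U$ near $\rho_t$.

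For part b) no new ingredient is needed. By the same representation and the bound $\psi_t(\xi)\ge c(\xi/\rho_t)^2q^L(\rho_t)$ on $|\xi|\le\rho_t$, together with $tq^L(\rho_t)\le1$, one gets $p^Y_t(0)=\tfrac1{2\pi}\int e^{-t\psi_t(\xi)}\,d\xi\ge\tfrac1{2\pi}\int_{|\xi|\le\rho_t}e^{-c(\xi/\rho_t)^2}\,d\xi=c'\rho_t$; combined with $|\partial_x p^Y_t|\le C\rho_t^2$ from part a), this forces $p^Y_t(x)\ge\tfrac{c'}{2}\rho_t$ for $|\rho_t x|\le c_1$, for a suitable $c_1$. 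Finally the event ``$J$ makes no jump on $[0,t]$'' has probability $e^{-\Lambda_t(\real)}\ge e^{-1}$, i.e.\ $P_t(\{0\})\ge e^{-1}$, so for $|\rho_t x|\le c_1$,
$$
p_t(x)=\int_\real p^Y_t(x-w)P_t(dw)\ge p^Y_t(x)P_t(\{0\})\ge\tfrac{c'}{2e}\,\rho_t,
$$
which is a bound of the form $\rho_t f_{low}(\rho_t x)$ with $f_{low}$ as in \eqref{flu}.
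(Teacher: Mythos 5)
Your proposal is essentially the paper's own route (the proof from \cite{KK12a}, reproduced in spirit in Appendix~A for Lemma~\ref{p0-der}): truncate $\mu$ at level $1/\rho_t$, identify the large-jump factor with the measure $P_t$ of \eqref{Poist}, and estimate the small-jump density by a contour shift of size $\eta\asymp\rho_t\ln(1+\rho_t|x|)$, whose cost is controlled through $\cosh(\eta/\rho_t)$ and $tq^L(\rho_t)\le 1$, exactly as in the paper. One small correction in part~b): to bound $p^Y_t(0)$ from below you need the \emph{upper} bound $\psi_t(\xi)\le\tfrac12(\xi/\rho_t)^2q^L(\rho_t)$ (from $1-\cos v\le v^2/2$), not the lower bound you cite, so that $e^{-t\psi_t(\xi)}\ge e^{-(\xi/\rho_t)^2/2}$ on $|\xi|\le\rho_t$ — a one-line fix that does not affect the argument.
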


One can construct  more explicit (but not necessarily more precise)  estimates on the derivatives at the price of more restrictive assumptions on the L\'evy measure.
Recall the definition of a sub-exponential probability measure  and a sub-exponential probability density.
\begin{definition}\label{sub} A distribution function  $\mathfrak{G}$ on $[0,\infty)$ is called  \emph{sub-exponential}, if

(i) for  every $y\in \real$ one has $\underset{x\to\infty}{\lim}\frac{1- \mathfrak{G}(x-y)}{1- \mathfrak{G}(x)} =1$;

(ii) $ \underset{x\to\infty}{\lim} \frac{1- \mathfrak{G}^{*2}(x)}{1- \mathfrak{G}(x)}=2$.

A distribution density $\mathfrak{g}$  on $[0,\infty)$ is called  \emph{sub-exponential}, if it is positive on $[x_0, \infty)$ for some $x_0\geq 0$, and

(i) for  every $y\in \real$ one has $\underset{x\to\infty}{\lim}\frac{ \mathfrak{g}(x-y)}{\mathfrak{g}(x)} =1$;

(ii) $\underset{x\to\infty}{\lim} \frac{\mathfrak{g}^{*2}(x)}{\mathfrak{g}(x)}=2$.

\end{definition}
We refer to  \cite{EGV79} and  \cite{Kl89} for the basic properties of sub-exponential distribution functions  and distribution densities.

We quote a result from  \cite{KK12a}  on the upper estimates on the transition  probability density $p_t(x)$ of $Z$ under the assumption of sub-exponentiality of the tails of the L\'evy measure. In our notation, this statement reads as follows.
\begin{lemma} \label{aux2}    Suppose that  one of the conditions below hold true:

i) there exists a sub-exponential distribution function $\mathfrak{G}$ on $[1,\infty)$ such that the measure $\mu(du)$ satisfies $t\mu\big(\{u: |\rho_t u|>v\}\big)\leq C(1-\mathfrak{G}(v))$ for $ v\geq 1 $, $t\in (0,1]$;

ii) the measure $\mu(du)$ possesses a density  $\pi(u)$  with respect to the Lebesgue measure, and there exists a sub-exponential density $\mathfrak{g}$ on $[1,\infty)$ such that  $\tfrac{t}{\rho_t}\pi\big(\tfrac{u}{\rho_t})\leq c\mathfrak{g}(u)$, $u\geq 1$, $ t\in (0,1]$.
Then
\begin{equation}\label{ptx20}
\Big| \frac{\partial^k}{\partial x^k} p_t(x)\Big| \leq c_k \rho_t^k \big( f_{up}(x\rho_t) + h(x\rho_t)\big), \quad t\in (0,1], \quad x\in \real,
\end{equation}
where the function $h$ is defined in \eqref{h}, and $f_{up}$ is of the form \eqref{flu} with some constant $a_k$ in the place of $d_2$ and $d_1=1$.
\end{lemma}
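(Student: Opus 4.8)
Since this lemma is attributed to \cite{KK12a}, the "proof" here is really an outline of how one recovers \eqref{ptx20} from the Fourier representation \eqref{ptx} of $p_t$ together with the sub-exponentiality hypotheses; I would present it as a sketch and refer to \cite{KK12a} for full details. The starting point is the decomposition of the L\'evy measure at the scale $\rho_t$: write $\mu = \mu_t^{(0)} + \mu_t^{(1)}$, where $\mu_t^{(0)}(du) = \mu(du)\I_{|\rho_t u|\le 1}$ is the "small jumps" part and $\mu_t^{(1)}(du) = \mu(du)\I_{|\rho_t u|>1}$ the "large jumps" part, whose total mass is $t^{-1}\Lambda_t(\real)\le \rho_t^{\,}\cdot$(bounded). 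Correspondingly $e^{-tq(\xi)}$ factors, and in the time-domain $p_t = p_t^{(0)} * (\text{compound Poisson law built from } \mu_t^{(1)})$, i.e. $p_t = p_t^{(0)} * P_t$ with $P_t$ as in \eqref{Poist} after the obvious rescaling. The point of cutting at $\rho_t$ is that $p_t^{(0)}$ — the density of the process with only the truncated small jumps — carries the "diffusive-like" decay: its characteristic exponent behaves like $q^U(\xi)$ for $|\xi|\lesssim \rho_t$ and like $q^U(\rho_t)\asymp 1/t$ for $|\xi|\gtrsim\rho_t$, so repeated integration by parts in $\xi$ against $e^{-ix\xi}$ yields the bound $|\partial_x^k p_t^{(0)}(x)|\le c_k\rho_t^{k+1} f_{up}(\rho_t x)$ with $f_{up}$ of the double-exponential form \eqref{flu}; this is exactly Lemma~\ref{aux1}a) specialized to the truncated measure, and it is the part that is genuinely analytic (the log-type rate $|x|\ln(1+|x|)$ comes from optimizing the shift of contour / the Bernstein-type estimate for the truncated exponent).

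With $p_t = p_t^{(0)} * P_t$ in hand, \eqref{ptx20} becomes a statement about how the convolution with $P_t$ degrades the bound $\rho_t^{k+1}f_{up}(\rho_t\cdot)$. Here the sub-exponentiality enters: under condition (i) the tail of each rescaled jump satisfies $t\mu(\{|\rho_t u|>v\})\le C(1-\mathfrak G(v))$, so $\Lambda_t$ (rescaled) is dominated in the tail by a finite multiple of a sub-exponential law; the defining property $1-\mathfrak G^{*2}\sim 2(1-\mathfrak G)$ propagates through all convolution powers $\Lambda_t^{*k}$ with constants summable against $1/k!$, so the compound-Poisson measure $P_t$ itself has a tail controlled by $h(\rho_t\cdot)=1-\mathfrak G(\rho_t\cdot)$ uniformly in $t\in(0,1]$. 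Then the standard sub-exponential convolution heuristic — "the tail of $f_{up}*h$ is, up to constants, $f_{up}+h$" because $f_{up}$ decays faster than any exponential and $h$ is sub-exponential — gives $\rho_t^{k+1}\big(f_{up}(\rho_t\cdot)*P_t\big)(x)\le c_k\rho_t^{k+1}\big(f_{up}(\rho_t x)+h(\rho_t x)\big)$, which after the normalization (note the stated bound has $\rho_t^k$, the extra $\rho_t$ being absorbed into the density-vs-measure bookkeeping via $P_t(\real)=1$) is precisely \eqref{ptx20}. Condition (ii) is handled the same way with densities in place of distribution functions: $\pi_t(u)=\tfrac{t}{\rho_t}\pi(u/\rho_t)\le c\mathfrak g(u)$ makes the absolutely continuous part of $P_t$ dominated by convolution powers of $\mathfrak g$, and sub-exponentiality of $\mathfrak g$ closes the estimate; the atom $e^{-\Lambda_t(\real)}\delta_0$ in $P_t$ just reproduces the $f_{up}$ term.

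The main obstacle — and the reason the cited paper needs real work rather than a one-line argument — is establishing the two uniformities in $t$. First, one must show the constants $c_k$ in $|\partial_x^k p_t^{(0)}(x)|\le c_k\rho_t^{k+1}f_{up}(\rho_t x)$ do not blow up as $t\to 0$; this rests on condition \textbf{A1}, i.e. on $q^U(\xi)\le\beta q^L(\xi)$, which keeps the truncated exponent genuinely comparable to $q^U$ on the whole relevant frequency range and hence keeps $\rho_t$ a legitimate single scaling parameter (without \textbf{A1} the truncated exponent can be too flat and the Fourier integral does not give the double-exponential decay). Second, one needs the sub-exponential tail/convolution estimates to hold with constants independent of $t$ after the $\rho_t$-rescaling, i.e. that $v\mapsto t\mu(\{|\rho_t u|>v\})$ stays below the fixed sub-exponential profile $1-\mathfrak G(v)$ for all small $t$ simultaneously — this is guaranteed by the way $\rho_t$ is defined through $q^U$ in \eqref{rot}, but verifying it, and checking that the summation $\sum_k \tfrac1{k!}(\text{const})^k$ from the compound-Poisson expansion converges with a $t$-uniform bound, is the technical heart of the argument in \cite{KK12a}. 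Everything else is the routine sub-exponential calculus recalled after Definition~\ref{sub}.
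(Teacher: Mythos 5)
The paper offers no proof of Lemma~\ref{aux2} at all---it is quoted from \cite{KK12a}---and your outline reproduces exactly the method of that reference as it is echoed in this paper's Appendix~A proof of Lemma~\ref{p0-der}: cut $\mu$ at the scale $\rho_t$, factor $p_t$ as the convolution of the truncated-small-jump density with the compound Poisson measure $P_t$ of \eqref{Poist}, get the $e^{-c\rho_t|x|\ln(1+\rho_t|x|)}$ decay of the truncated part and its $x$-derivatives by the contour-shift (Cauchy theorem) argument, and control the tail of $P_t$ through Kesten-type bounds on the convolution powers $\Lambda_t^{*k}$, closing with the same subexponential convolution estimate that the paper uses in the proof of Lemma~\ref{Phi-up2}. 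The only, inconsequential, slip is your aside that the total mass of the large-jump part is of order $\rho_t$; what is actually used is $\Lambda_t(\real)\le t\,q^U(\rho_t)=1$, which is what makes the compound Poisson expansion uniformly summable in $t$.
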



The proof relies on Lemma~\ref{Phi-up} and on  the first statement of Lemma~\ref{gen}.  Let us introduce the objects which will be used in the proofs below.

\begin{proof}[Proof of Lemma~\ref{Phi-up}]
Since $p_t^0(x,y)$   satisfies \eqref{p0},  we have
    \begin{align*}
    \Phi_t(x,y)&=
    \int_\real [p_t^0(x+u,y)-p_t^0(x,y)]m(y,u) \mu(du)
    \\&
    =
    \left[ \int_{|\rho_t  u|\leq 1}+\int_{|\rho_t u|>1} \right] [p_t^0(x+u,y)-p_t^0(x,y)] m(y,u) \mu(du)
    \\&
    =:J_1+J_2.
    \end{align*}
First we estimate $J_1$. Due to symmetry of the measure $\mu$ and symmetry of $m(x,u)$ in $u$, we can write $J_1$ as
$$
J_1= \int_{|\rho_t u|\leq 1}  [p_t^0(x+u,y)-p_t^0(x,y) -u\frac{\partial}{\partial x} p_t^0(x,y)] m(y,u) \mu(du).
$$
 Using the Taylor expansion with the remaining term in the integral form, we get
\begin{align*}
 \Big|p_t^0(x+u,y)-p_t^0(x,y)- u\frac{\partial}{\partial x} p_t^0(x,y) \Big| & = \Big| \int_0^u \tfrac{\partial^2}{\partial v^2} p_t^0(x+v,y) (u-v) dv\Big|
 \\&
 \leq  u^2 \Theta(u,t,y-x),
\end{align*}
where
$$
\Theta(u,t,y-x):= \frac{1}{|u| }\int_0^u\Big| \tfrac{\partial^2}{\partial v^2} p_t^0(x+v,y)\Big|dv.
$$
Let us estimate $\Theta(u,t,x)$. For simplicity, we assume that $u>0$, the case  $u<0$ is analogous. Using Lemma~\ref{aux1} to estimate  $\tfrac{\partial^2}{\partial v^2} p_y^0(x+v,y)$   and performing the change of variables, we get
\begin{equation}
\begin{split}
\Theta(u,t,x) &\leq  \frac{1}{u } \int_0^{u} \int_\real \rho_t^3 f_{up} ((x-w+v)\rho_t)P_t(dw) dv
\\&
=  \frac{\rho_t^2}{u } \int_\real \int_0^{u \rho_t}  f_{up} ((x-w)\rho_t-z)dz P_t(dw).
\end{split}\label{TH1}
\end{equation}
Note that in $J_1$ we integrate in $u$ over the domain $\{ u:\,\, |u\rho_t|\leq 1\}$. Let us show that there exist $c>0$ and $\vartheta\in (0,1)$, independent of $y$,  such that for all  $V\in [0,1]$
\begin{equation}
\frac{1}{V } \int_0^{V } f_{up} (y-z)dz\leq c f_{up}^{\vartheta} (y).\label{V1}
\end{equation}
Let $y\leq 0$. Then for $z\geq 0$ we have $f_{up}(y-z)= d_1e^{-d_2 (|y|+z)\ln (|y|+z+1)}\leq f_{up}(z)$, and \eqref{V1} follows.
Let $y\in [0,2]$. Since there exists $c\in (0,1)$ such that $c\leq e^{-d_2 |y|\ln (|y|+1)}\leq 1$ for all $y\in [0,2]$, then
$$
\frac{1}{V } \int_0^{V } f_{up} (y-z)dz\leq d_1\leq c^{-1} f_{up}(y), \quad y\in [0,2].
$$
Finally, let $y\geq 2$. Take $\vartheta<\min_{y\geq 2} \frac{(y-1)\ln y}{y\ln (y+1)}$. Then for any $z\in [0,1]$ we get
$$
(y-z)\ln (y-z+1)\geq (y-1)\ln y \geq \vartheta  y \ln (y+1),
$$
which implies  \eqref{V1}.
Thus, for all $y\in \real$ we have \eqref{V1} with $c$ and $\vartheta $ as above.

Using \eqref{V1} for estimation of the right-hand side of \eqref{TH1}, we get for  all $u$ such that  $|u\rho_t|\leq 1$  the estimate
$$
|\Theta(u,t,x)|\leq c_3 \rho_t^2  \big(g_{t,\vartheta }* P_t\big)(x),
$$
where $g_{t,\vartheta}(x)$ is defined in \eqref{gtk}, and $\vartheta\in (0,1)$ comes from \eqref{V1}.

To complete the estimation of $J_1$ it remains to  estimate  the integral $\rho_t^2 \int_{|\rho_t u |\leq 1} |u|^{2+\epsilon} \mu(du)$. We have:
\begin{align*}
\rho_t^2 \int_{|\rho_t u |\leq 1} |u|^{2+\epsilon} \mu(du)&= \frac{1}{\rho_t^\epsilon } \int_{|\rho_t u |\leq 1} |\rho_t u|^{2+\epsilon} \mu(du)
\leq \frac{1}{\rho_t^\epsilon } \int_{|\rho_t u |\leq 1} |\rho_t u|^2 \mu(du)
\\&
=\frac{1}{\rho_t^\epsilon}  q^L(\rho_t )
\leq \frac{c_4}{\rho_t^\epsilon} q^U(\rho_t )
\\&
\leq \frac{c_4}{t\rho_t^\epsilon},
\end{align*}
where in the last line we used that $q^U(\rho_t)=1/t$.
Let $\eta:= \epsilon/\sigma$, where $\sigma$ is defined prior to \eqref{growth3}; then $\tfrac{1}{t \rho_t^\epsilon}\leq c_5 t^{-1+\eta}$.

Thus, from the above calculations, we obtain
\begin{equation}
\begin{split}
J_1 &\leq \int_{|\rho_t u|\leq 1} |u|^{2+\epsilon}   \Theta(u,t,x) \mu(du)
\leq  \rho_t^2  \int_{|\rho_t u|\leq 1} |u|^{2+\epsilon} \mu(du)\,\big(g_{t,\vartheta}* P_t\big)(x)
\\&
\leq  c_5 t^{-1+\eta}  \big(g_{t,\vartheta}* P_t\big)(x).
\end{split}\label{j01}
\end{equation}
Let us   estimate $J_2$.
Recall the measure $\chi_{t,\epsilon}(du)$ defined in \eqref{chi1}:
$$
\chi_{t,\epsilon}(du)= t\rho_t^\epsilon  (|u|^\epsilon\wedge 1) \I_{\{|\rho_t u|>1\}}\mu(du).
$$
Let us show that $\chi_{t,\epsilon}(\real)\leq c <\infty$ for all $t\in [0,1]$.  By our assumption that $0<\epsilon<\alpha$ we have
\begin{align*}
\chi_{t,\epsilon} (\real) &= 2t \rho_t^\epsilon \int_{1/\rho_t}^1 u^\epsilon \mu(du)
+ 2 t\rho_t^\epsilon \mu[1,\infty)
\\&
\leq 2t\epsilon \rho_t^\epsilon \int_1^{\rho_t} \frac{\mu\{ u: \, us \geq 1\}}{s^{1+\epsilon}} ds+ 2 t^{1-\alpha\epsilon}\mu[1,\infty)
\\&
\leq 2t\epsilon \rho_t^\epsilon \int_1^{\rho_t} \frac{q^U(s)}{s^{1+\epsilon}}ds+2\mu[1,\infty).
\end{align*}
Note that since $(q^U(s))' = 2s^{-1} q^L(s)$ in the a.e. sense,  we have by the l'Hospital rule
$$
\lim_{r\to\infty} \frac{\int_1^r s^{-1-\epsilon}q^U(s)ds}{r^{-\epsilon}q^U(r)}
=
\lim_{s\to\infty}\frac{q^U(r)}{2q^L(r)-\epsilon q^U(r)}\leq \frac{1}{\alpha-\epsilon},
$$
implying that
$$
t \rho_t^\epsilon \int_1^{\rho_t} \frac{q^U(s)}{s^{1+\epsilon}}ds\leq c_1 t q^U(\rho_t) =c_1,
$$
which proves our claim that  $\chi_{t,\epsilon}(\real)\leq c<\infty$ for all $t\in (0,1]$.
 Thus,  we have
    \begin{equation}
    \begin{split}
    |J_2|&\leq \int_{|\rho_t  u|> 1}|p_t^0(x+u,y)-p_t^0(x,y)|m(y,u)\mu(du)
    \\&
    \leq   c_1 \int_{|\rho_t u|>1} p_t^0(x+u,y) (|u|^\epsilon\wedge 1) \mu(du) + c_1 p_t^0(x,y) \int_{|\rho_t u|>1}(|u|^\epsilon\wedge 1) \mu(du)
    \\&
    = c_2 t^{-1+\eta} \big(p_t^0 * \chi_{t,\epsilon}\big)(y-x) + c_2 t^{-1+\eta} \chi_{t,\epsilon}(\real) p_t^0(x,y)
    \\&
    \leq c_3 t^{-1+\eta} \left( g_{t}* (P_t* \chi_{t,\epsilon} + P_t)\right)(x).
        \end{split}
    \end{equation}
Thus,  we arrive at \eqref{ph1} with $\eta=\epsilon/\sigma$,  some constant $C>0$, $g_t$ of the form \eqref{gtc}, and $G_t(dw)$  given by \eqref{G1}.

\end{proof}

\begin{proof}[Proof of Lemma~\ref{Phi-up2}]  The proof relies on Lemma~\ref{aux2}.
Observe that under  the conditions on the function $h$ posed in the theorem, the distribution function $\mathfrak{G}$ (resp., the distribution density $\mathfrak{g}$)  is sub-exponential. Indeed,  condition i) from Definition~\ref{sub} is clearly satisfied; for ii) we have by i) and the dominated convergence theorem
\begin{align*}
\lim_{x\to\infty} \frac{1-\mathfrak{G}^{*2}(x)}{1-\mathfrak{G}(x)}&=\lim_{x\to\infty}\int_1^{x-1} \frac{1-\mathfrak{G}(x-y)}{1-\mathfrak{G}(x)} d\mathfrak{G}(y)+ \lim_{x\to\infty} \frac{1-\mathfrak{G}(x-1)}{1-\mathfrak{G}(x)}
\\&=2,
\end{align*}
when the condition  I of Theorem~\ref{t-jump3} holds true,  and
\begin{align*}
\lim_{x\to\infty} \frac{\mathfrak{g}^{*2}(x)}{\mathfrak{g}(x)}&=\lim_{x\to\infty}\int_1^{x/2} \frac{\mathfrak{g}(x-y)\mathfrak{g}(y)}{\mathfrak{g}(x)} dy+ \lim_{x\to\infty}\int_{x/2}^{x-1} \frac{\mathfrak{g}(x-y)\mathfrak{g}(y)}{\mathfrak{g}(x)} dy
\\&
= 2\lim_{x\to\infty}\int_1^{x/2} \frac{\mathfrak{g}(x-y)\mathfrak{g}(y)}{\mathfrak{g}(x)} dy
\\&=2,
\end{align*}
in the case when the condition  II of Theorem~\ref{t-jump3} holds true; in the second line  of the last display  we used the change of variables.

Note that since $h_{2\epsilon}(x)\equiv x^{2\epsilon}h(x)$ is monotone decreasing,  $\mathfrak{G}_\epsilon(v) := 1- v^\epsilon(1-\mathfrak{G}(v))$   and  $\mathfrak{g}_\epsilon(v):= c_\epsilon v^\epsilon \mathfrak{g}(v)$ are, respectively, the distribution function and the distribution density; here $c_\epsilon>0$ is the normalizing constant.

Observe, that the function $\chi_{t,\epsilon}$ defined in \eqref{chi1} satisfies
\begin{equation}
\chi_{t,\epsilon}\{u:\,\, |u\rho_t|\geq v\}\leq c h_\epsilon(v), \quad v\geq 1. \label{chi10}
\end{equation}
 Indeed,  suppose first that $v\geq \rho_t$. Then  by \eqref{dens01} and \eqref{dens02} we have
\begin{align*}
\chi_{t,\epsilon} \{ u:\,\, |u\rho_t|\geq v\} &= t \rho_t^\epsilon \mu\{ u:\,\, |u \rho_t|\geq v\} \I_{\{\rho_t\leq v\}} \leq c_1 \rho_t^\epsilon h(v)\I_{\{\rho_t\leq v\}}
\leq c_1 v^\epsilon h(v).
\end{align*}
Similarly, for $v\leq \rho_t$ we have
\begin{align*}
\chi_{t,\epsilon} \{ u:\,\, |u\rho_t|\geq v\} &\leq  t \int_{v^\epsilon}^\infty \mu\{ u:\,\, |u \rho_t|^\epsilon \geq r\} dr
 = \epsilon t \int_v^{\infty}  \frac{\mu\{ u:\,\, |u \rho_t| \geq r\} }{r^{1-\epsilon}} dr
 \\&
 \leq  c_1 \epsilon\int_v^\infty \frac{r^{2\epsilon}  h(r)}{r^{1+\epsilon}}dr\leq c_2 \epsilon v^{2\epsilon} h(v)\int_v^\infty \frac{dr}{r^{1+\epsilon}}
 \\&
 \leq c_2 v^\epsilon h(v),
\end{align*}
where in the second line from below we used $h_{2\epsilon}(x)$ is decreasing.

Therefore, subexponentiality of $h$ (and, hence, of $h_\epsilon$)  we derive in both cases I and II
\begin{align*}
g_{t,\vartheta}* (P_t* \chi_{t,\epsilon}) (y-x) &\leq c_3   c_3 \Big( g_{t,\vartheta}(y-x)+ \rho_t h_\epsilon(|y-x|\rho_t)\Big).
\end{align*}
 Thus, under the assumptions of the theorem, we can rewrite the estimate obtained in Lemma~\ref{Phi-up} as
\begin{equation}
\big|\Phi_t(x,y)(x,y)|\leq C_1 t^{-1+\eta}\big( g_{t,\vartheta}(y-x) + h_{t,\epsilon}(y-x)\big), \label{LZ10}
\end{equation}
where $\eta=\epsilon/\sigma$.

\end{proof}

\subsection{Proof of Theorem~\ref{t-main1}}

By Lemma~\ref{Phi-up} and Lemma~\ref{gen}.a) we get
\begin{equation}
|\Psi_t(x,y)|\leq \sum_{k=1}^\infty \big|\Phi_t^{\star k} (x,y)\big| \leq C_0 t^{-1+\delta} \big(g_{t,\zeta}* \Pi_t\big)(y-x), \label{Phi-est}
\end{equation}
where  $\delta=\eta/2=\epsilon/(2\sigma)$,
\begin{equation}\label{Pi}
\Pi_t(du):=\sum_{k=1}^\infty A^k G_t^{(k)} (du),
\end{equation}
the family of probability measures $\{ G_t^{(k)},\, t>0,\, k\geq 1\}$ is given by \eqref{gk-new}, and $A\in(0,1)$ is some constant.

Since $G_t^{(k)}(\cdot)$, $k\geq 1$,  are the probability measures,  we have
\begin{equation}
\Pi_t(\rn)= A(1-A)^{-1}, \quad \quad t\in (0,1].  \label{pi2}
\end{equation}
 Thus, the series $\Psi_t(x,y)= \sum_{k=1}^\infty \Phi_t^{\star k} (x,y)$ converges for any $t>0$, $x,y\in \real$, uniformly on compact subsets.

Proceeding in the same way as above,  we get
\begin{equation}
\big|(p^0 \star \Psi)_t(x,y)\big|\leq  C_1 t^\delta \big(g_{t,\chi} * \tilde{\Pi}_t\big) (y-x), \label{up-11}
\end{equation}
for some  $\chi\in (0,\zeta)$, where
\begin{equation}\label{til-pi}
\tilde{\Pi}_t(dw)=(C_2\delta)^{-1} \int_0^1 \int_\rn (1-r)^{-1+\delta} \Pi_{t(1-r)}(dw-u)P_{tr}(du)dr
\end{equation}
is the probability measure for any $t\in [0,1]$; here $C_2=C_0 (1-A)A^{-1}$.
Thus, expression \eqref{sol10} is well-defined. \qed

\subsection{Proof of Theorem~\ref{t-jump3}}
By Lemma~\ref{Phi-up2} and Lemma~\ref{gen}.b) we get
\begin{equation}\label{Phi-est2}
|\Psi_t(x,y)|\leq \sum_{k=1}^\infty \big|\Phi_t^{\star k} (x,y)\big| \leq C t^{-1+\delta} \big(g_{t,\zeta}(y-x)+ h_{t,\epsilon}(y-x)\big),
\end{equation}
which together with the estimate on $p_t^0(x,y)$ given by Lemma~\ref{aux2} gives \eqref{txy2}.
\qed

 \section{Continuity and smoothness properties: Proof of Propositions~\ref{p-main1} and \ref{p-main2} }\label{cont}

\begin{proof}[Proof statements 1 and 2  of Proposition~\ref{p-main1}]
1.  Note that by \eqref{growth} and the theorem on continuity with respect to parameters the function  $p_t(x)$ is continuous in $(t,x)\subset (0,\infty)\times\real$. Note that we can rewrite  $(p^0\star \Psi)_t(x,y)$  as
  $$
  (p^0\star \Psi)_t(x,y)= \int_0^t \int_\real p_{t-s}^0(x,y+u)\Psi_s(u+y,y)duds.
  $$
  Recall that by Lemma~\ref{aux1} we have $|p_t^0(x,y)|\leq \big(g_{t} * P_t\big)(x-y)$. Then, using this estimate and \eqref{Phi-est} we derive
 for $0<s\leq t/2$  (cf.  the proof of Theorem~\ref{t-main1})
\begin{align*}
p_{t-s}^0 (x,u+y)\Psi(s,u+y,y)
&\leq \int_\real\int_\real g_{t-s} (y-x+u-w_1) g_{s,\zeta} (u-w_2) P_{t-s}(dw_1)\Pi_s(dw_2)
\\
&= c(t_0)\big( g_{s,\theta}* \Pi_s\big)(u),
\end{align*}
where  we used that $f_{up}(x)\leq d_1$, and  for $t\geq t_0>0$  the function $\rho_t$ is bounded by a constant, depending on $t_0$.  Here $\theta\in (0,\zeta)$. Analogous calculation for $t/2<s\leq t$ gives the same upper estimate.
Therefore,
\begin{equation}
p_{t-s}^0(x,u+y)\Psi_s(u+y,y)\leq c_3(t_0)  \big( g_{s,\theta}* \Pi_s\big)(u),
\end{equation}
with the right-hand side integrable on $[0,t]\times \real$. Thus, by the theorem on continuity with respect to parameters, $(p_t^0\star \Psi)_t(x,y)$ is jointly continuous in $(t,x,y)$ on $[t_0, \infty)\times \real\times \real$.

\medskip

2. The proof of  \eqref{eqII} is contained essentially in the proof of Theorem~\ref{t-main1}. Namely,
using representation \eqref{sol10}, the estimate for $p_t^0(x,y)$ which follows from
 Lemma~\ref{aux1}, and \eqref{up-11}, we get
\begin{equation}
\begin{split}
p_t(x,y) \leq  c_2  \big(g_{t,\chi} * Q_t\big) (x-y)
 \label{up1}
\end{split}
\end{equation}
for all $t\in (0,1]$, $x,y\in \real$,   $\chi\in(0,\zeta)$ (cf. \eqref{up-11}), and the probability measure
\begin{equation}\label{Q1}
Q_t(du)=  c\big( P_t(du)+ t^\delta (P\star \tilde{\Pi})_t(du)\big).
\end{equation}
Here $c>0$ is the normalizing constant, such that $Q_t(\real)=1$.

Note that in principle  in the procedure described above    $0<\chi<1$ can be chosen arbitrarily  close to 1.
 \end{proof}
For the proof of statements 3 and 4 of Proposition~\ref{p-main1} we need some auxiliary statements.
Let
\begin{equation}\label{mathp}
\mathcal{P}_t (dw):= P_t(dw)+ (P_t *\Lambda_t)(dw),
\end{equation}
where  $P_t(dw)$ is defined in \eqref{Poist}.
\begin{lemma}\label{p0-der}
The function $p_t^0(x,y)$ is differentiable with respect to $t$, the derivative $\partial_t p_t^0(x,y)$ is  continuous in  $(t,x,y)\in(0,\infty)\times \real\times \real$, and for all $k\geq 0$ we have
$$
\big|\partial_t \partial^k_x  p_t^0(x,y)\big|\leq C t^{-1}\rho_t^k \big(g_t * \mathcal{P}_t\big)(y-x), \quad t>0, \quad x,y\in \real,
$$
where   $\mathcal{P}_t(dw)$ is defined in \eqref{mathp},  $g_t$ is of the form \eqref{gtc} with some constant $c>0$.
\end{lemma}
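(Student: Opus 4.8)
The plan is to differentiate the explicit Fourier representation \eqref{ptx} of $p_t(x)$ under the integral sign and then estimate the resulting expression in the same spirit as Lemma~\ref{aux1}. Recall that $p_t^0(x,y)=p_t(y-x)$ with $p_t(x)=\tfrac{1}{2\pi}\int_\real e^{-ix\xi-tq(\xi)}\,d\xi$. Since $\partial_t e^{-tq(\xi)}=-q(\xi)e^{-tq(\xi)}$ and, by condition \textbf{A1} and \eqref{growth}, $q(\xi)\geq c|\xi|^\alpha$ grows at a power rate, the family $\{q(\xi)e^{-tq(\xi)}\}$ is dominated for $t$ in any compact subset of $(0,\infty)$ by an integrable function; the same holds after multiplying by $(i\xi)^k$ because $q(\xi)$ is at most of order $|\xi|^2$ near infinity (it is a L\'evy exponent of a measure with finite second moment near $0$ and $q^U(\xi)\asymp \xi^2\int(1\wedge(\xi u)^2)\mu(du)$ grows no faster than $\xi^2$). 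Hence $\partial_t\partial_x^k p_t^0(x,y)$ exists, equals
\[
\partial_t\partial_x^k p_t^0(x,y)=\frac{(-1)^{k+1}}{2\pi}\int_\real (i\xi)^k q(\xi)\, e^{-i(y-x)\xi - tq(\xi)}\,d\xi,
\]
and is jointly continuous in $(t,x,y)$ on $(0,\infty)\times\real\times\real$ by the theorem on continuity with respect to parameters.

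The next step is to reduce the estimate for $\partial_t\partial_x^k p_t^0$ to estimates already available in Lemma~\ref{aux1}. The key observation is the identity $q(\xi)e^{-tq(\xi)} = -\partial_t e^{-tq(\xi)}$, combined with the decomposition of $q$ into its ``small-jump'' and ``large-jump'' parts relative to the scale $\rho_t$. Write $q(\xi)=q_t^{(1)}(\xi)+q_t^{(2)}(\xi)$ where $q_t^{(1)}(\xi)=\int_{|\rho_t u|\le 1}(1-\cos\xi u)\mu(du)$ and $q_t^{(2)}(\xi)=\int_{|\rho_t u|>1}(1-\cos\xi u)\mu(du)=\Lambda_t(\real)/t - (\text{cosine transform of }\Lambda_t/t)$. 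The large-jump part contributes, after multiplication by $p_t(x)$, a convolution of $p_t$ with $\Lambda_t$ (up to the factor $t^{-1}$), which is exactly the second summand in $\mathcal{P}_t$ in \eqref{mathp}. The small-jump part satisfies $q_t^{(1)}(\rho_t^{-1}\cdot)\le$ const, so the inverse Fourier transform of $q_t^{(1)}(\xi)(i\xi)^k e^{-tq(\xi)}$ admits, after the scaling substitution $\xi\mapsto\rho_t\xi$, a bound of the form $t^{-1}\rho_t^k$ times the already-established kernel $g_t$ convolved with $P_t$; here one uses $t q^U(\rho_t)=1$ to produce the prefactor $t^{-1}$ and the bound \eqref{ptx-der} on the derivatives of $p_t$ together with the fact that multiplying a Fourier multiplier by a bounded function of $\rho_t\xi$ preserves the single-kernel estimate (one reproves the relevant step of Lemma~\ref{aux1} verbatim, tracking the extra bounded factor). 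Collecting the two contributions yields the claimed bound with $\mathcal{P}_t=P_t+P_t*\Lambda_t$.

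The main obstacle is the careful bookkeeping in the small-jump estimate: one must show that inserting the extra multiplier $q_t^{(1)}(\xi)$ (or more precisely $t\,q_t^{(1)}(\xi)$, which is bounded by a constant uniformly in $t$ after rescaling) into the oscillatory integral defining the derivatives of $p_t$ does not destroy the exponential-type decay $e^{-c\rho_t|x|\ln(1+\rho_t|x|)}$ of the kernel $g_t$. This is the heart of the estimates in \cite{KK12a} underlying Lemma~\ref{aux1}, and the present situation differs only by the harmless bounded factor; I would therefore invoke the proof of Lemma~\ref{aux1} and indicate the modification rather than redo it. The remaining points --- differentiation under the integral sign and joint continuity --- are routine consequences of the domination described above, and the identification of the large-jump term with $P_t*\Lambda_t$ is immediate from the definitions \eqref{lam}, \eqref{Poist}, \eqref{mathp}.
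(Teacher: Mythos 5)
Your overall strategy coincides with the paper's (Appendix~A): differentiate the Fourier representation under the integral sign, split $q=q_t+(q-q_t)$ at the threshold $|\rho_t u|\le 1$, identify the large-jump contribution with a convolution against $\Lambda_t$ (whence the $P_t*\Lambda_t$ summand in $\mathcal{P}_t$), and rerun the argument of Lemma~\ref{aux1} for the small-jump part. The large-jump half and the continuity/differentiation-under-the-integral part are fine.

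There is, however, a genuine gap in your treatment of the small-jump term. You claim that the extra multiplier $t\,q_t(\xi)$ is ``bounded by a constant uniformly in $t$ after rescaling'' and hence a ``harmless bounded factor''. This is false: the rescaling bound $t\,q_t(\rho_t\zeta)\le \tfrac12\zeta^2$ (coming from $1-\cos s\le s^2/2$ and $t q^L(\rho_t)\le t q^U(\rho_t)=1$) is uniform only for $|\zeta|\lesssim 1$, while for $|\xi|\gg\rho_t$ one has $q_t(\xi)\asymp q^U(\xi)\gg 1/t$ (e.g.\ in the $\alpha$-stable case $t\,q_t(\xi)\asymp(|\xi|/\rho_t)^\alpha\to\infty$). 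So ``tracking the extra bounded factor'' through the proof of Lemma~\ref{aux1} does not work as described. The correct mechanism, which the paper uses, is the elementary inequality $s e^{-s/2}\le C$, giving
\begin{equation*}
q_t(y)\,e^{-tq(y)}\;\le\; q(y)\,e^{-tq(y)}\;\le\; \tfrac{C}{t}\,e^{-tq(y)/2},
\end{equation*}
applied \emph{after} the contour shift $\xi\mapsto y+i\eta$ in the oscillatory integral; this simultaneously produces the $t^{-1}$ prefactor and reduces the integral to the one already estimated in \cite{KK12a} with $q$ replaced by $q/2$, so the kernel $g_t$ (with a smaller constant $c$) survives. Note also that the complex-analytic step (extending $q_t$ to $y+i\eta$, using $q_t(y+i\eta)\ge q_t(y)+q_t(i\eta)$ and $-q_t(i\eta)\le\cosh(\eta/\rho_t)-1$) is not optional bookkeeping: it is the only source of the decay $e^{-c\rho_t|x|\ln(1+\rho_t|x|)}$, and it must be redone with the extra factor $q_t$ present, which is exactly where the inequality above is needed.
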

The proof of this lemma can be obtained by modifying the proof of the upper estimate for $p_t^0(x,y)$ (cf. \eqref{ptx-der}), see  \cite{KK12a}. In order to make the paper self-contained, we give the proof in Appendix A.

This lemma allows to transfer the differentiability properties of $p_t^0(x,y)$ to $p_t(x,y)$. But for this we need to establish the  continuity  and upper estimates on $\partial_t \Phi^{\star k}$ and $\partial_t \Psi^{\star k}$, respectively.

\begin{lemma}\label{phi-der}
The function $\Psi_t(x,y)$ is differentiable with respect to $t$, $\partial_t \Psi_t(x,y)$ is  continuous in  $(t,x,y)\in(0,\infty)\times \real\times \real$, and there exists a family of measures $\{ \Theta_t, \, t\geq 0\}$,  such that
\begin{equation}\label{phi-der10}
\big|\partial_t \Psi_t(x,y)\big|\leq C t^{-1} (g_t* \Theta_t)(y-x), \quad t>0, \quad x,y\in \real.
\end{equation}
Here $\zeta\in (0,1)$ is some constant, $g_t$ is of the form \eqref{gtc} with some constant $c>0$.
\end{lemma}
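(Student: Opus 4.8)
The plan is to differentiate the series $\Psi_t(x,y)=\sum_{k\ge 1}\Phi^{\star k}_t(x,y)$ term by term, so the first task is to make sense of $\partial_t\Phi^{\star k}_t$. Recall that $\Phi^{\star k}_t=\Phi\star\Phi^{\star(k-1)}_t$, i.e.\ an integral of the form $\int_0^t\int_\real \Phi_{t-s}(x,z)\,\Phi^{\star(k-1)}_s(z,y)\,dz\,ds$. The standard difficulty is that $\Phi_{t-s}$ blows up like $(t-s)^{-1+\delta}$ as $s\uparrow t$, so one cannot naively push $\partial_t$ inside. I would instead use the two-sided decomposition $\int_0^t=\int_0^{t/2}+\int_{t/2}^t$, differentiate the first piece by moving $\partial_t$ onto $\Phi^{\star(k-1)}_s(z,y)$ (which is smooth on $s\le t/2<t$ away from its own singularity at $s=0$ only when $k\ge 2$; for $k=1$ one differentiates $\Phi_t$ directly using the differentiability of $p^0_t$, Lemma~\ref{p0-der}, together with $\partial_t\mathfrak{L}(x,D)p^0_t=\mathfrak{L}(x,D)\partial_tp^0_t$), and for the second piece move $\partial_t$ onto $\Phi_{t-s}$ after the change of variable $s\mapsto t-s$. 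Each differentiation produces an extra factor $t^{-1}$, exactly as in Lemma~\ref{p0-der}, and the resulting kernels are still controlled by $g_t$ convolved with a probability-type measure by the same reasoning as in Lemmas~\ref{Phi-up}, \ref{gen}.

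Concretely, I would first prove by induction (mimicking the proof of Lemma~\ref{gen}.a and Lemma~\ref{es-new}.I) that $\partial_t\Phi^{\star k}_t$ exists, is jointly continuous in $(t,x,y)\in(0,\infty)\times\real\times\real$, and satisfies a bound of the form
\begin{equation}\label{dt-phik}
\big|\partial_t\Phi^{\star k}_t(x,y)\big|\leq \tilde C_k\, t^{-1}\,t^{-1+k\delta}\,\big(g_{t,\theta_k}*G_t^{(k)}\big)(y-x),
\end{equation}
with constants $\tilde C_k$ of the same multiplicative structure as the $C_k$ in \eqref{Ck}. The key point is that the $\partial_t$ acting on one of the two convolution factors contributes a harmless $s^{-1}$ or $(t-s)^{-1}$, which, together with the already-present $s^{-1+\delta}$ or $(t-s)^{-1+(k-1)\delta}$, is still integrable in $s$ over $[0,t/2]$, resp.\ $[t/2,t]$, provided $\delta>0$; the Beta-function bookkeeping is identical to the proof of Lemma~\ref{gen}. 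As there, the constants $\tilde C_k$ grow too fast to sum directly, so I would again split off the first $k_0$ terms (with $k_0$ as in \eqref{k0}) and use the $\rho_t^{-1}$-improved bound of Lemma~\ref{es-new} for $k>k_0$, so that $\sum_k\tilde C_k t^{k\delta}<\infty$ uniformly on $t\in(0,1]$ (and locally uniformly for $t\ge 1$). Summing \eqref{dt-phik} over $k$ then yields \eqref{phi-der10} with $\Theta_t$ a suitable convergent combination $\sum_k \tilde A^k G_t^{(k)}$ of the probability measures from \eqref{gk-new}, of finite total mass; joint continuity of $\partial_t\Psi_t$ follows from the local uniform convergence of $\sum_k\partial_t\Phi^{\star k}_t$ together with the continuity of each term, by the theorem on continuity with respect to a parameter (exactly as in the proof of statement~1 of Proposition~\ref{p-main1}).

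The main obstacle, as usual in the parametrix method, is the interchange of $\partial_t$ and the $t$-dependent domain of integration in the $\star$-convolution, i.e.\ legitimately differentiating $\int_0^t(\cdot)\,ds$ when the integrand is singular at the endpoint $s=t$ like $(t-s)^{-1+\delta}$ (and at $s=0$ for $k\ge 2$). The clean way around it is the $\int_0^{t/2}+\int_{t/2}^t$ splitting described above: on $[0,t/2]$ the factor $\Phi^{\star(k-1)}_s(z,y)$ is bounded and smooth in $t$ near the upper limit, while $\partial_t$ of the upper limit $t/2$ contributes a boundary term $\tfrac12\Phi_{t/2}(x,z)\Phi^{\star(k-1)}_{t/2}(z,y)$ that is manifestly continuous and of the right order; symmetrically on $[t/2,t]$ after reflecting $s\mapsto t-s$, where now $\Phi_{t-s}$ is the singular factor but $\partial_t$ lands on it before the singularity at $s=t$ is reached. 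Once this interchange is justified for $k=1,2$ using Lemma~\ref{p0-der}, the induction step carries the estimate \eqref{dt-phik} forward with no further analytic subtlety. The remaining work is purely the (routine) verification that all the convolution and Beta-function estimates from Section~\ref{gen} go through verbatim with one extra $t^{-1}$ in front, which I would not spell out in detail.
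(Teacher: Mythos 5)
Your plan is correct and follows essentially the same route as the paper: bound $\partial_t\Phi_t$ via Lemma~\ref{p0-der} (gaining an extra factor $t^{-1}$), differentiate the iterated convolutions by induction using the symmetric $\int_0^{t/2}$ splitting with the boundary term at $s=t/2$, redo the Beta-function bookkeeping of Lemma~\ref{gen} and the $k_0$-splitting of Lemma~\ref{es-new} to sum the series, and take $\Theta_t$ as a geometrically weighted sum of the iterated measures. The only cosmetic difference is that the paper replaces the measures $G_t^{(k)}$ of \eqref{gk-new} by a slightly enlarged family $\mathcal{G}_t^{(k)}$ built from $\mathcal{P}_t=P_t+P_t*\Lambda_t$ and incorporating the boundary-term convolutions, which is exactly the adjustment your "probability-type measure" remark anticipates.
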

\begin{proof}
We use the same argument as for the proof of Theorem~\ref{t-main1}. Using Lemma~\ref{p0-der} for $\partial_t \partial_x^2 p_t^0(x,y)$  one can obtain the estimate for $\partial_t \Phi_t(x,y)$ in the same way as it was done for $\Phi_t(x,y)$ in Lemma~\ref{Phi-up}:
\begin{equation} \label{der-Phi1}
\big|  \partial_t \Phi_t(x,y)\big|\leq C_1t^{-2+\delta}\big(g_{t} *\mathcal{G}_{t}\big)(y-x),
\end{equation}
where  $C_1>0$ is some constant,  and  the family of measures $\mathcal{G}_{t} (dw)$ is given by
\begin{equation}\label{G11}
\mathcal{G}_t(dw):=  \mathcal{P}_t(dw)+ (\mathcal{P}_t*
\chi_{t,\epsilon})(dw).
\end{equation}
Write
\begin{equation}\label{Phik}
\Phi^{\star (k+1)}_t(x,y)=\int_0^{t/2}\int_{\real}\Phi_{t-s}^{\star k}(x,z)\Phi_s(z,y)\,dz ds+\int_0^{t/2}\int_{\real}\Phi_{s}^{\star k}(x,z)\Phi_{t-s}(z,y)\,dz ds.
\end{equation}
It can be  shown  by induction  that  each $\Phi^{\star k}_t(x,y)$ has a continuous derivative with respect to $t$, and
\begin{equation}\label{Phik10}
\begin{split}
\prt_t\Phi^{\star (k+1)}_t(x,y)&=\int_0^{t/2}\int_{\real}(\prt_t\Phi^{\star k})_{t-s}(x,z)\Phi_s(z,y)\,dz ds+
\int_0^{t/2}\int_{\real}\Phi_{s}^{\star k}(x,z)(\prt_t\Phi)_{t-s}(z,y)\,dz ds\\&+\int_{\real}\Phi_{t/2}^{\star k}(x,z)\Phi_{t/2}(z,y)\, dz.
\end{split}
\end{equation}
By induction, we get
\begin{equation}\label{Phik20}
\big|   \partial_t \Phi_t^{\star k} (x,y)\big|\leq C_{k}    t^{-2+k\delta}
\big(g_{t}^{(k)} *\mathcal{G}_t^{(k)}\big)(y-x),\quad k\geq 2,
\end{equation}
where the sequence  $(g^{(k)}_t(x))_{k\geq 1}$ is the same as in Lemma~\ref{gen}, and
\begin{equation}\label{calgtk}
\begin{split}
\mathcal{G}^{(k)}_t(dw):&=\frac{1}{B((k-1)\delta,\delta)+1}\Big(  \int_0^1 \int_\rn r^{-1+\delta}(1-r)^{-1+\delta(k-1)} \mathcal{G}_{t(1-r)}^{(k-1)}(dw-u) \mathcal{G}_{tr} (du)dr \\
&\quad +\big( \mathcal{G}_{t/2}^{(k-1)} * \mathcal{G}_{t/2}\big)(dw)\Big), \quad k\geq 2.
\end{split}
\end{equation}
Take as before $k_0:= \big[ \tfrac{n}{\alpha \delta}\big] +1$. Then by the same argument as in Lemma~\ref{es-new} we get
\begin{equation}\label{Phik30}
\big| \partial_t \Phi_t^{\star (k_0+\ell)}(x,y) \big|\leq  D_\ell  t^{-2+ \delta (k_0+\ell)} \big(g_{t,\zeta} * \mathcal{G}_t^{(k_0+\ell)} \big) (y-x), \quad \ell\geq 1,
\end{equation}
where $D_\ell := C(k_0) K^\ell B((k_0+\ell-1) \delta, \delta)$, $\ell\geq 1$; here $C(k_0), K>0$ are some constants.

Finally, define
\begin{equation}
\Theta_{t}(du):=\sum_{k=1}^\infty A^k \mathcal{G}_{t}^{(k)}(du),  \label{Pit}
 \end{equation}
 where $A\in (0,1)$ is some  constant.
Then \eqref{phi-der10} follows from \eqref{Phik20} and \eqref{Phik30}.

\end{proof}

\begin{proof}[Proof of statements 3 and 4 of Proposition~\ref{p-main1}]
3.   The proof of differentiability of $p_t(x,y)$ essentially follows from Lemma~\ref{p0-der} and  Lemma~\ref{phi-der}. Indeed, writing $p_t(x,y)$ in the form
$$
p_t(x,y)
=p_t^0(x,y)+\int_0^{t/2}\int_{\real}p_{t-s}^0(x,z)\Psi_s(z,y)\,dzds+\int_{0}^{t/2}\int_{\real}p_{s}^0(x,z)\Psi_{t-s}(z,y)\,dz\, ds,
$$
and applying the above lemmas we get \eqref{der-es} with
$$
\tilde{Q}_t(dw):= c\big(\mathcal{P}_t(dw)+ (\mathcal{P}\star \Pi)_t(dw)+ (P\star \Theta)_t(dw)\big),
$$
where $\Pi_t(dw)$ is the measure appearing in \eqref{Phi-est}, the measure  $\Theta_t(dw)$ is given by \eqref{Pit}, and $c>0$ is the normalizing constant.

\end{proof}

\begin{proof}[Proof of Proposition~\ref{p-main2}]
1.  The proof of continuity of $T_tf$ follows by the same argument  as the proof of  the first statement of Proposition~\ref{p-main1}.
 To prove that $T_tf(x)$ vanishes as $|x|\to \infty$, observe that $p_t^0(x,y)\to 0, \quad |x|\to \infty$,  
\begin{align*}
\int_{\real} |(p^0\star \Psi)_t(x,y)| dy\leq C, \quad t\in (0, 1],
\end{align*}
 (see  \eqref{up-11}),  and for  every $t>0$
$$
\sup_x\int_{y:\,|y-x|>R}|(p^0\star \Psi)_t(x,y)|\, dy\to 0, \quad R\to\infty.
$$
Then statement 1  follows from the above relations.

\medskip

2.   Note that   for any $\phi\in B_b(\real)$ we have
\begin{align*}
\int_\real \phi(y) (p^0\star \Psi)_t(x,y)dy &\leq c_2t^\delta \int_\real \int_\real g_{t,\chi}(y-x-w) \big( P_t \star \Pi_t\big)(dw)dy
\\&
\leq
 c_3 t^{\delta}, \quad  t\in (0,1], \quad x\in \real.
\end{align*}
Since by the very definition of $p_t^0 (x,y)$ we have
$$
\sup_x \left| \int_{\real} p_t^0(x,y)\phi(y)dy-\phi(x)\right|\to 0, \quad t\to 0,
$$
we arrive at \eqref{Tt0}.

\medskip

3. Statement 3 follows from the respective statement 3 of Proposition~\ref{p-main1}.
\end{proof}

\section{Justification procedure. Proof of Theorem~\ref{t-main2}}\label{just}

Our approach follows the same line as that of the proof of  the justification presented in \cite{KK14a} and  the forthcoming paper \cite{KK14b}, in which  we extend this method to the case of a more general operator. Nevertheless, in order to make the paper self-contained and to simplify the reading, we give below the outline of this proof, skipping some easy but lengthy calculations.

The proof is based on the properties of the \emph{approximative}  fundamental solution.
Denote for $\epsilon>0$
\begin{equation}\label{pe}
p_{t,\epsilon}(x,y):=p_{t+\epsilon}^0(x,y)  + \int_0^t \int_{\real}  p_{t-s+\epsilon}^0(x,z)  \Psi_s(z,y) dzds.
\end{equation}
The function   $p_{t,\epsilon}(x,y)$ provides  a smooth approximation for $p_t(x,y)$ in the following sense.

 \begin{lemma}\label{aux-e}
 Let $p_{t,\epsilon}(x,y)$ be the function defined by \eqref{pe}. The statements below hold true.
 \begin{enumerate}

 \item For any $\epsilon>0$ the function  $p_{t,\epsilon}(x,y)$ is continuously  differentiable in $t$ and belongs to the class $C^2_\infty(\real)$ in $x$.

 \item  $p_{t,\epsilon}(x,y)\to p_t(x,y)$ as $\epsilon\to 0$, uniformly on compact sets in $(0,\infty)\times \real\times \real$.

 \item  For any  $\epsilon>0$ and $f\in C_\infty(\real)$,
  the function
  $ \int_{\real}p_{t,\epsilon}(x,y)f(y)\, dy$
  is continuously  differentiable in $t$ and belongs to the class $C^2_\infty(\real)$ w.r.t.  $x$.

\item For any   $f\in C_\infty(\real)$ and $t>0$,
   $$
  \lim_{\epsilon\to 0}  \int_{\real}p_{t,\epsilon}(x,y)f(y)\, dy=\int_\real  p_{t}(x,y)f(y)dy,
  $$
  uniformly w.r.t. $(t,x)\in [\tau,T]\times \real $ for any $\tau>0$, $T>\tau$.

\item For any   $f\in C_\infty(\real)$ and $t>0$,
$$
\lim_{\epsilon\to 0}  \prt_t\int_\real  p_{t,\eps}(x,y)f(y)dy=\int_\real \prt_t p_{t}(x,y)f(y)dy,
 $$
uniformly w.r.t. $(t,x)\in [\tau,T]\times \real $ for any $\tau>0$, $T>\tau$.

\end{enumerate}

\end{lemma}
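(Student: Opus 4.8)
The plan is to prove the five assertions about the approximative fundamental solution $p_{t,\epsilon}(x,y)$ by leveraging the smoothing effect of the shift $t\mapsto t+\epsilon$, which keeps us away from the singularity at time $0$, together with the estimates on $\Psi_t(x,y)$, $\partial_t p_t^0(x,y)$ and $\partial_t \Psi_t(x,y)$ established in Section~\ref{cont} (Lemmas~\ref{p0-der}, \ref{phi-der} and \eqref{Phi-est}). The key point throughout is that for fixed $\epsilon>0$ the kernels $p_{t-s+\epsilon}^0(x,z)$ and their $x$-derivatives up to order $2$, as well as their $t$-derivative, are uniformly bounded (locally in $t$) and integrable in $z$, because $\rho_{t-s+\epsilon}\le\rho_\epsilon<\infty$ and the Gaussian-type factor $g_{t-s+\epsilon}$ is integrable; meanwhile $\Psi_s(z,y)$ is integrable in $(s,z)$ on $[0,t]\times\real$ with the bound $|\Psi_s(z,y)|\le C_0 s^{-1+\delta}(g_{s,\zeta}*\Pi_s)(y-z)$, which is an $L_1$-in-$s$ singularity only. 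So differentiation under the integral sign and interchange of limits are all justified by dominated convergence.

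First I would prove assertion~1: write $p_{t,\epsilon}(x,y)=p_{t+\epsilon}^0(x,y)+(p_{\cdot+\epsilon}^0\star\Psi)_t(x,y)$. Joint continuity and $C^2_\infty$-membership in $x$ of the first term follow from Lemma~\ref{aux1} (with $\rho$ bounded away from the singularity). For the convolution term, split $\int_0^t=\int_0^{t/2}+\int_{t/2}^t$ as in \eqref{Phik}; on $[0,t/2]$ the factor $p_{t-s+\epsilon}^0$ is smooth in $(t,x)$ with all relevant derivatives dominated by $c(\epsilon,t)(g_{s,\theta}*\Pi_s)(\cdot)$ times an integrable function of $s$, so one may differentiate in $t$ and twice in $x$ under the integral; on $[t/2,t]$ the roles are reversed and one differentiates the $\Psi$ factor using Lemma~\ref{phi-der}. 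Adding the boundary term $\int_\real p_{t/2+\epsilon}^0(x,z)\Psi_{t/2}(z,y)\,dz$ coming from the $t$-dependence of the upper limit (exactly as in \eqref{Phik10}) gives the formula for $\partial_t p_{t,\epsilon}$ and its continuity; the $C^2_\infty$-in-$x$ statement then follows from the $x$-regularity of $p^0$ together with the uniform-in-$x$ integrable bounds, which also force decay at infinity.

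For assertion~2, subtract: $p_{t,\epsilon}(x,y)-p_t(x,y)=\big(p_{t+\epsilon}^0(x,y)-p_t^0(x,y)\big)+\int_0^t\!\int_\real\big(p_{t-s+\epsilon}^0(x,z)-p_{t-s}^0(x,z)\big)\Psi_s(z,y)\,dz\,ds$. The first bracket tends to $0$ locally uniformly by continuity of $p_t^0$ (Lemma~\ref{aux1}); for the integral, split at $s=t-\epsilon^{1/2}$ (say): on $s\le t-\epsilon^{1/2}$ the integrand converges pointwise to $0$ and is dominated, uniformly in $\epsilon$, by $c(t_0)(g_{s,\theta}*\Pi_s)(\cdot)$ which is integrable, so dominated convergence applies; on the short interval $s>t-\epsilon^{1/2}$ both $p^0$ terms are bounded (since $\rho$ is bounded near $s=t$) and $\int_\real|\Psi_s(z,y)|\,dz\le C s^{-1+\delta}$ is integrable, so that piece is $O(\epsilon^{\delta/2})$. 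Assertions~4 and~3 are the ``tested against $f\in C_\infty$'' versions: integrate everything against $f(y)\,dy$, use $\int_\real|p_{t-s+\epsilon}^0(x,z)|\,dz\le C$ and $\int_\real\int_\real|\Psi_s(z,y)|\,|f(y)|\,dy\,dz\le C\|f\|_\infty s^{-1+\delta}$ to get uniformity in $(t,x)\in[\tau,T]\times\real$; the $C^2_\infty$-in-$x$ and $C^1$-in-$t$ claims of assertion~3 follow from assertion~1 by differentiating under $\int f(y)\,dy$, again justified by the same dominated bounds. Assertion~5 is proved the same way but starting from the representation \eqref{Phik10} for $\partial_t$ of the convolution and Lemma~\ref{phi-der} for $\partial_t\Psi$: one checks that $\partial_t p_{t,\epsilon}(x,y)-\partial_t p_t(x,y)$ is, after testing against $f$, a sum of terms each converging to $0$ uniformly on $[\tau,T]\times\real$ by dominated convergence, with the only delicate pieces being the short-time boundary term near $s=t$ and the time-singularity at $s=0$, both of which are $L_1$-integrable and hence harmless.

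The main obstacle I expect is assertion~5: handling $\partial_t$ of the convolution $p^0_{\cdot+\epsilon}\star\Psi$ requires the somewhat intricate identity \eqref{Phik10}--\eqref{Phik20}, and one must keep careful track of the extra boundary term $\int_\real\Phi^{\star k}_{t/2}(x,z)\Phi_{t/2}(z,y)\,dz$ (and its analogue at the level of $p^0\star\Psi$) and verify it is continuous and properly dominated as $\epsilon\to 0$; in particular one needs the bound on $\partial_t\Psi_t$ from Lemma~\ref{phi-der}, which carries a $t^{-1}$ that is \emph{not} integrable at $0$, so the splitting $\int_0^{t/2}+\int_{t/2}^t$ must be arranged so that the non-integrable singularity always sits on the $p^0$ side (which is smooth and bounded there thanks to the $+\epsilon$ shift) and never on the $\partial_t\Psi$ side. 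Once this bookkeeping is set up correctly, every remaining step is a routine dominated-convergence argument, which is why I would only sketch those.
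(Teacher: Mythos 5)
Your sketch is correct and follows essentially the same route as the paper, which gives no detailed proof of Lemma~\ref{aux-e} but states that it "repeats the arguments of Lemma~5.1 in \cite{KK14a}" with modifications supplied by exactly the ingredients you use: the bound \eqref{ptx-der} for $p^0$, the bound \eqref{Phi-est} for $\Psi$, and the derivative estimates of Lemmas~\ref{p0-der} and \ref{phi-der}, together with Propositions~\ref{p-main1} and \ref{p-main2}. In particular, your device of splitting the time convolution at $t/2$ so that the $t$-derivative (with its non-integrable $t^{-1}$ singularity on the $\partial_t\Psi$ side) never falls on a factor evaluated near time zero is precisely the mechanism the paper employs in \eqref{Phik}--\eqref{Phik10} and in the proof of statement 3 of Proposition~\ref{p-main1}.
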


   Denote
$$
q_{t,\epsilon}(x,y): = \Big( L(x,D)-\prt_t \Big) p_{t,\epsilon} (x,y).
$$
Observe  that $L(x,D)p_{t,\epsilon} (x,y)$ is well defined due to statement 1 in Lemma~\ref{aux-e}.

\begin{lemma}\label{l5}
For any $f\in C_\infty(\real)$  we have
\begin{itemize}
\item[(i)]
\begin{equation}\label{conv_loc}
\int_\real  q_{t,\eps}(x,y)f(y)dy\to 0, \quad \epsilon\to 0,
\end{equation}
uniformly w.r.t. $(t,x)\in [\tau,T]\times \real$ for any $\tau>0, T>\tau$, and
 \begin{equation}\label{conv_int}
\int_0^t \int_\real   q_{t,\eps}(x,y) f(y)dyds\to 0, \quad \epsilon\to 0,
\end{equation}
uniformly w.r.t. $(t,x)\in [0,T]\times \real$ for any $T>0$.
\item[(ii)]
$$
\int_\real p_{t,\epsilon}(x,y)f(y)dy\to f(x), \quad t,\epsilon\to 0,
$$
uniformly w.r.t $x\in \real$.
\end{itemize}
\end{lemma}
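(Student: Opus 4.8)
The plan is to exploit the fact that $p_{t,\eps}$ is built from the genuine parametrix kernels $p_s^0$ and $\Psi_s$, merely shifted in time by $\eps$, so that the operator identities valid for the non-regularized objects carry over with a controllable error. First I would compute $q_{t,\eps}(x,y)=(L(x,D)-\prt_t)p_{t,\eps}(x,y)$ directly from the defining formula \eqref{pe}. Using $(L(D)-\prt_t)p_s^0=0$ from \eqref{p0}, the contribution of $L(D)$ applied to the first term $p_{t+\eps}^0$ cancels the $\prt_t$ of that term, while for the convolution term one differentiates under the integral sign, splitting off the boundary contribution at $s=t$ and using $(L(D)-\prt_t)$ on $p_{t-s+\eps}^0$ again being zero. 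What survives is: (a) the term $\mathfrak L(x,D)p_{t+\eps}^0(x,y) = \Phi_{t+\eps}(x,y)$, (b) the boundary term $-p_{\eps}^0(x,y)\,$... more precisely the term $\int_\real p_\eps^0(x,z)\Psi_t(z,y)\,dz$ coming from $\prt_t$ hitting the upper limit, (c) the term $\mathfrak L(x,D)\big(p^0\star\Psi\big)_{t,\eps}$, i.e. $\int_0^t\!\int \Phi_{t-s+\eps}(x,z)\Psi_s(z,y)\,dz\,ds$. Recalling that $\Psi_t=\Phi_t+(\Phi\star\Psi)_t$, the combination (a)+(c) is $\Phi_{t+\eps} + (\Phi\star\Psi)_{\cdot+\eps}$ evaluated at $t$, which equals $\Psi$-type expression at shifted time; so $q_{t,\eps}(x,y)$ reduces to a difference of the form $\big[\Phi_{t+\eps}+(\Phi\star\Psi)_{t}^{(\eps)}\big](x,y) - \big[\text{the } s=t \text{ boundary term}\big]$, all of which are differences between a quantity at time $t+\eps$ (or with a shifted inner kernel) and the corresponding quantity at time $t$, plus a genuinely small remainder supported near $s=t$. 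The point is that $q_{t,\eps}$ is expressible as such a "time-shift defect", hence tends to $0$ pointwise, and — crucially — is dominated by the integrable majorants furnished by Lemma~\ref{Phi-up}, Lemma~\ref{gen} and \eqref{Phi-est}, uniformly in $\eps\in(0,1]$.

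For part (i), once $q_{t,\eps}(x,y)$ has this structure I would bound $\big|\int_\real q_{t,\eps}(x,y)f(y)\,dy\big|$ using $\|f\|_\infty$ and the convolution estimates: each piece is controlled by $t^{-1+\delta}$ (or a similar negative power with positive exponent remainder) times a convolution of $g_{t,\zeta}$ with a probability measure, which integrates to a constant in $y$. On a time interval $[\tau,T]$ the prefactor $t^{-1+\delta}$ is bounded, and the time-shift defect $\to 0$ as $\eps\to 0$ uniformly, by continuity of $s\mapsto p_s^0$ and $s\mapsto\Psi_s$ in the relevant norms (available from Proposition~\ref{p-main1} and Lemma~\ref{phi-der}); dominated convergence then gives \eqref{conv_loc}. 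For \eqref{conv_int} one integrates the bound in $s$ over $[0,t]$; the singularity $s^{-1+\delta}$ at $s=0$ is integrable since $\delta>0$, so $\int_0^t|q_{s,\eps}(x,y)|\,ds$ has a majorant that is integrable in $y$ uniformly in $(t,x)\in[0,T]\times\real$ and in $\eps$, and the pointwise convergence to $0$ upgrades to uniform convergence by dominated convergence again. Part (ii) is easier: $p_{t,\eps}(x,y)=p_{t+\eps}^0(x,y)+(p^0\star\Psi)_{t,\eps}(x,y)$, the first term is a genuine transition density at time $t+\eps$ so $\int p_{t+\eps}^0(x,y)f(y)\,dy\to f(x)$ as $t,\eps\to0$ uniformly in $x$ by the standard approximate-identity property recorded in the proof of Proposition~\ref{p-main2}.2, and the second term is bounded by $c\,(t+\eps)^{\delta}\to0$ uniformly in $x$ by \eqref{up-11}.

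The main obstacle I anticipate is the bookkeeping in the identity for $q_{t,\eps}$: one must carefully justify differentiating the convolution $\int_0^t\int_\real p_{t-s+\eps}^0(x,z)\Psi_s(z,y)\,dz\,ds$ in $t$ (Leibniz rule with the moving endpoint and the $t$-dependence inside $p_{t-s+\eps}^0$), and justify moving $L(x,D)=L(D)+\mathfrak L(x,D)$ under both the $z$-integral and the $s$-integral. This is where statement~1 of Lemma~\ref{aux-e} (that $p_{t,\eps}$ is $C^1$ in $t$ and $C^2_\infty$ in $x$) and Lemma~\ref{p0-der} (the derivative bound $|\prt_t\prt_x^k p_t^0|\le Ct^{-1}\rho_t^k(g_t*\mathcal P_t)$) are essential: they provide the uniform-in-$\eps$ integrable dominations that make all the interchanges legitimate and that turn the pointwise "time-shift defect $\to0$" into the required uniform convergence. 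Everything else is a routine, if lengthy, application of the convolution estimates already established; as the authors note, these calculations parallel those in \cite{KK14a}, so I would present the structure and defer the estimates.
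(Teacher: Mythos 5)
Your proposal is correct and follows essentially the same route as the paper, which itself only sketches the argument by deferring to Lemma~5.1 of \cite{KK14a}: one computes $q_{t,\eps}$ explicitly as the time-shift defect $\Phi_{t+\eps}(x,y)+\int_0^t\int_\real\Phi_{t-s+\eps}(x,z)\Psi_s(z,y)\,dz\,ds-\int_\real p^0_\eps(x,z)\Psi_t(z,y)\,dz$, which vanishes in the limit by $\Psi=\Phi+\Phi\star\Psi$, the approximate-identity property of $p^0_\eps$, and dominated convergence with the majorants from Lemmas~\ref{Phi-up}, \ref{gen}, \ref{p0-der} and \ref{phi-der}, while (ii) follows from the approximate-identity property of $p^0_{t+\eps}$ together with the $O(t^\delta)$ bound on the convolution term as in \eqref{up-11}. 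The interchanges of $L(x,D)$, $\partial_t$ and the integrals that you flag as the main bookkeeping issue are exactly what statement~1 of Lemma~\ref{aux-e} and Lemma~\ref{p0-der} are invoked for in the paper, so your plan matches the intended proof.
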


The proof of the above statements repeats the arguments of Lemma~5.1 in \cite{KK14a}, with the necessary modifications provided by the upper estimate for $p^0_t(x,y)$  (cf. \eqref{ptx-der}), the upper bound for $\Psi$ (cf.  \eqref{Phi-est}), their  derivatives (cf.  Lemmas~\ref{p0-der} and \ref{phi-der}),   and  Propositions~\ref{p-main1} and \ref{p-main2}.

Lemmas~\ref{aux-e} and \ref{l5} allow us to prove the following statement.

\begin{lemma}\label{pmp} The kernel $p_t(x,y)$ is  non-negative, possesses the  semigroup property, and for any $f\in {C}_\infty^2 (\real)$  one has
\begin{equation}\label{eq-norm}
\int_\real p_t(x,y)f(y)dy=f(x)+  \int_0^t \int_\real p_s (x,y)h_f(y)dyds, \quad t>0,
\end{equation}
where $h_f(x):=Lf(x)$,  which is well-defined for $f\in {C}_\infty^2 (\real)$.
\end{lemma}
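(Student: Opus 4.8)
The plan is to obtain all three assertions — non-negativity, the semigroup (Chapman--Kolmogorov) property, and the integrated identity \eqref{eq-norm} — as limits of the corresponding statements for the smooth approximations $p_{t,\epsilon}(x,y)$, using Lemmas~\ref{aux-e} and \ref{l5}. The key observation is that, by construction \eqref{pe} and equations \eqref{p0}, \eqref{Phi}, \eqref{Psi}, the approximation satisfies a perturbed evolution equation: one computes directly that
\begin{equation}\label{plan-ev}
\prt_t p_{t,\epsilon}(x,y)=L(x,D)p_{t,\epsilon}(x,y)-q_{t,\epsilon}(x,y),\qquad p_{0,\epsilon}(x,y)=p_\epsilon^0(x,y),
\end{equation}
where the ``error'' $q_{t,\epsilon}$ is exactly the quantity controlled in Lemma~\ref{l5}. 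Both differentiation in $t$ and application of $L(x,D)$ in $x$ are legitimate here by statement~1 of Lemma~\ref{aux-e}, and the analogous statements after integration against $f\in C_\infty(\real)$ are covered by statements 3--5 of Lemma~\ref{aux-e} and by Lemma~\ref{l5}(i).

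First I would prove \eqref{eq-norm}. Fix $f\in C_\infty^2(\real)$ and set $u_\epsilon(t,x):=\int_\real p_{t,\epsilon}(x,y)f(y)\,dy$. Using \eqref{plan-ev}, statement~3 of Lemma~\ref{aux-e}, and the fact that $L(x,D)$ acts under the integral sign (which follows from the bounds on $p^0_t$, $\Psi$ and their derivatives together with the domination $m(x,u)\mu(du)\le \mu(du)$ from Remark~\ref{rem10}~b)), one gets $\prt_t u_\epsilon(t,x)=\int_\real p_{t,\epsilon}(x,y)h_f(y)\,dy-\int_\real q_{t,\epsilon}(x,y)f(y)\,dy$ with $h_f=Lf$; note $h_f\in C_\infty(\real)$ since $f\in C_\infty^2(\real)$. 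Integrating from $0$ to $t$ and using $u_\epsilon(0,x)=\int_\real p^0_\epsilon(x,y)f(y)\,dy$ gives an exact identity for each $\epsilon$; then I let $\epsilon\to0$, invoking statement~4 of Lemma~\ref{aux-e} for the left side and for the $h_f$-term, Lemma~\ref{l5}(ii) for the initial term $u_\epsilon(0,x)\to f(x)$, and the uniform convergence \eqref{conv_int} in Lemma~\ref{l5}(i) to kill the $q_{t,\epsilon}$-term. This yields \eqref{eq-norm}.

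Next, non-negativity: the function $p_{t,\epsilon}(x,y)$ itself need not be nonnegative, so I would instead argue at the level of the operators. From \eqref{eq-norm}, for $f\in C_\infty^2(\real)$ the map $t\mapsto T_tf$ solves $\prt_t T_tf=T_t(Lf)$ with $T_0f=f$ in the integrated sense; combined with Propositions~\ref{p-main1}--\ref{p-main2} (continuity, the estimate \eqref{eqII} giving $\int p_t(x,y)\,dy\le C$, and differentiability of $t\mapsto T_tf$) this is enough to run a positive-maximum-principle / comparison argument: if $f\ge0$ then $T_tf\ge0$, because at a would-be first zero of $\inf_x(T_tf)(x)$ reached from above the operator $L(x,D)$, being of the form \eqref{lxd0} with a nonnegative kernel, is nonpositive there, forcing $\prt_t$ of the infimum to be $\ge0$; a standard Gronwall-type perturbation handles the lack of a classical minimum. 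Then $p_t(x,y)\ge0$ follows since $T_t$ has kernel $p_t(x,y)$ and this holds for all nonnegative $f$ in a sufficiently rich class (e.g.\ $C_\infty^2$, then all of $B_b$ by the representation \eqref{t-semi} and a density/monotone-class argument). Alternatively, and perhaps more cleanly, non-negativity can be read off the approximation-theoretic construction of $p_t$ as a convergent series whose building blocks are transition densities and nonnegative kernels — but the maximum principle route is the most robust.

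Finally, the semigroup property $p_{t+s}=p_t*p_s$: I would first establish $T_{t+s}f=T_t T_sf$ for $f\in C_\infty^2(\real)$ by a uniqueness argument. Both $t\mapsto T_{t+s}f$ and $t\mapsto T_t(T_sf)$ are, by \eqref{eq-norm} and Proposition~\ref{p-main2}, continuously differentiable $C_\infty(\real)$-valued solutions of the same linear integrated Cauchy problem with the same initial datum $T_sf$; one shows such solutions are unique (again via \eqref{eq-norm}, the bound $\|T_t\|\le C$ on $C_\infty$, and Gronwall). Hence the two agree, so $\int p_{t+s}(x,y)f(y)\,dy=\int\!\!\int p_t(x,z)p_s(z,y)\,dz\,f(y)\,dy$ for all $f\in C_\infty^2(\real)$; density of $C_\infty^2(\real)$ in $C_\infty(\real)$ and joint continuity of the kernels (Proposition~\ref{p-main1}, statement~1) upgrade this to the pointwise identity $p_{t+s}(x,y)=(p_t*p_s)(x,y)$. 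The main obstacle is the uniqueness step underpinning both non-negativity and the semigroup property: one must make precise in what sense \eqref{eq-norm} determines $T_tf$ and then run the positivity / Gronwall arguments with the nonlocal operator $L(x,D)$ in place of a differential operator — here the key enabling facts are that $L(x,D)$ has a nonnegative jump kernel (so the positive maximum principle applies) and that all relevant kernels and their $t$-derivatives obey the integrable bounds from Proposition~\ref{p-main1}, so that differentiation under the integral and interchanges of limits are justified throughout.
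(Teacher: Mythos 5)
There is a genuine gap, and it sits at the very first step. From the definition $q_{t,\epsilon}(x,y)=\big(L(x,D)-\prt_t\big)p_{t,\epsilon}(x,y)$ what you actually get for $u_\epsilon(t,x)=\int_\real p_{t,\epsilon}(x,y)f(y)\,dy$ is
$\prt_t u_\epsilon(t,x)=L_x u_\epsilon(t,x)-\int_\real q_{t,\epsilon}(x,y)f(y)\,dy$,
i.e.\ the operator acts in the \emph{backward} variable $x$ of the kernel. Your claimed identity replaces $L_x\int p_{t,\epsilon}(x,y)f(y)\,dy$ by $\int p_{t,\epsilon}(x,y)h_f(y)\,dy$ with $h_f=Lf$ acting at $y$; that transfer would require either the forward (adjoint) equation in $y$ for $p_{t,\epsilon}$ or symmetry of the kernel, and neither is available here: $\mathfrak{L}(x,D)$ has the $x$-dependent kernel $m(x,u)\mu(du)$, $L(x,D)$ is not self-adjoint, and $p_t(x,y)\neq p_t(y,x)$ in general. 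Integrating your corrected identity and letting $\epsilon\to0$ would at best give $T_tf=f+\int_0^t L_xT_sf\,ds$ (and even that needs control of $L_xu_\epsilon$ in the limit, which Lemmas~\ref{aux-e}--\ref{l5} do not directly provide), which is a different statement from \eqref{eq-norm}; converting $L T_sf$ into $T_s(Lf)$ presupposes the semigroup/generator structure that Lemma~\ref{pmp} is needed to establish, so the argument is circular. The paper does not prove \eqref{eq-norm} by direct integration at all: it proves non-negativity, the semigroup property and \eqref{eq-norm} by one and the same perturbed-extremum scheme (the argument written out for non-negativity, with the other two referred to \cite{KK14a}).

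The other two parts are underspecified exactly where the difficulty lies. In your positivity argument you apply $L(x,D)$ to $T_tf$, but $T_tf$ is not known to belong to $C^2_\infty(\real)$, and the infimum over the non-compact strip $[\tau,\mathfrak{T}]\times\real$ need not be attained; the paper's proof is built precisely to avoid both issues: it works with $p_{t,\epsilon}$, which \emph{is} $C^2_\infty$ in $x$ by Lemma~\ref{aux-e}, adds the linear perturbation $t\theta/(2\mathfrak{T})$ so that a global minimum is attained at some $(t_\eps,x_\eps)$ with $t_\eps>\tau$ and $x_\eps$ in a compact set, applies the positive maximum principle there, and contradicts \eqref{conv_loc}. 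Your fallback remark that non-negativity ``can be read off the construction'' is false: the parametrix terms $\Phi^{\star k}$ are not sign-definite (only $|\Phi^{\star k}|$ is estimated), which is why the maximum-principle justification is needed in the first place. Finally, the Gronwall-uniqueness route to the semigroup property is not substantiated: to say that $t\mapsto T_t(T_sf)$ solves the integrated Cauchy problem you would need \eqref{eq-norm} for the initial datum $T_sf\notin C^2_\infty(\real)$, and Gronwall gives no uniqueness for an unbounded non-local generator without a maximum-principle type input — which brings you back to the paper's scheme.
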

\begin{proof}
 We show that $p_t(x,y)$ is non-negative; the proofs of the semigroup property and of \eqref{eq-norm} are analogous, and we refer to \cite{KK14a} for details.

Since $p_t(x,y)$ is continuous in $(t,x,y)$, it is enough to show that
\begin{equation}\label{int1}
\int_\real p_t(x,y)f(y)dy \geq 0
\end{equation}
for any $f\geq 0$, $f\in C_\infty(\real)$. Without loss of generality we assume that
\begin{equation}\label{f1}
\int_\real f(y)dy=1.
\end{equation}

Suppose that \eqref{int1} fails. Then there exist  $t_0>0$, $x_0\in \real$, and the function
$f$ as above such that for some $\theta>0$ we have
\begin{equation}\label{int2}
\int_\real p_{t_0}(x_0,y)f(y)dy <-\theta.
\end{equation}
By statement 4 in Lemma~\ref{aux-e}   we can approximate the integral in \eqref{int1} by $\int_\real p_{t,\epsilon}(x,y)f(y)dy$.
  Since   $f\geq 0$, then by statement  (ii) from Lemma \ref{l5} there exist $\tau_0>0$, $\eps_0>0$, such that
\begin{equation}\label{tau5}
\inf_{x\in \real, \tau\in(0, \tau_0], \eps\in (0, \eps_0]}\int_\real  p_{\tau, \eps}(x,y)f(y)dy>-\theta/3.
\end{equation}
Fix $\tau\in (0,\tau_0\wedge t_0)$ and $\mathfrak{T}\in (t_0, \infty)$.
By (\ref{int2}) and statement 4 in Lemma~\ref{aux-e} there exists $\epsilon_{\tau,\mathfrak{T}}>0$ such that
 \begin{equation}\label{inf}
\inf_{t\in [\tau,\mathfrak{T}],x\in \real} \int_\real p_{t,\epsilon} (x,y)f(y)dy \leq -\theta,\quad \eps\in (0, \eps_{\tau, \mathfrak{T}}).
  \end{equation}
Define the  function
\begin{equation}\label{ptilde}
\tilde{p}_{t,\epsilon} (x,y)= p_{t,\epsilon} (x,y)+ t \theta/(2\mathfrak{T}).
 \end{equation}
 By statements \eqref{inf} and \eqref{f1} we get
 $$
\inf_{t\in [\tau,\mathfrak{T}],x\in \real} \int_\real \tilde{p}_{t,\epsilon}  (x,y)f(y)dy  \leq -\theta/2<0,\quad \eps\in (0, \eps_{\tau, \mathfrak{T}}).
$$
  uniformly w.r.t. $t\in [\tau, \mathfrak{T}]$.  On the other hand, by statement 2 of Lemma~\ref{aux-e},
 \begin{equation}\label{lalala}
\lim_{|x|\to\infty}  \int_\real \tilde{p}_{t,\epsilon}  (x,y)f(y)dy \to t \theta/(2\mathfrak{T})>0
 \end{equation}
  uniformly w.r.t. $t\in [\tau, \mathfrak{T}]$. Thus,  for every  $\eps\in (0, \eps_{\tau, \mathfrak{T}})$ there exist $x_\eps\in \real, t_\eps\in [\tau, \mathfrak{T}]$, such that
\begin{equation}\label{glob_min}
\int_\real \tilde p_{t_\eps,\epsilon} (x_\eps,y)f(y)dy=\min_{t\in [\tau,\mathfrak{T}],x\in \real} \int_\real \tilde p_{t,\epsilon} (x,y)f(y)dy\leq -\theta/2<0.
\end{equation}
Observe, that since the convergence in \eqref{lalala} is uniform,  all  points $x_\eps$, $\eps\in (0,\eps_{\tau, \mathfrak{T}})$, belong to some compact set $K(\tau, \mathfrak{T},f)$, and by \eqref{tau5}  for $\epsilon\in (0,\epsilon_{\tau,\mathfrak{T}}\wedge \epsilon_0)$ we have
$$
  \int_\real p_{t_\eps,\epsilon} (x_\eps,y)f(y)dy\leq \int_\real \tilde p_{t_\eps,\epsilon} (x_\eps,y)f(y)dy\leq -\theta/2<-\theta/3,
  $$
implying  $t_\epsilon>\tau$.

 Take $\eps\in (0,\eps_{\tau, \mathfrak{T}}\wedge \eps_0)$; since the  minimum in (\ref{glob_min}) w.r.t. $(t,x)\in [\tau, \mathfrak{T}]\times \real$ is attained at some point $(t_\eps, x_\eps)\in (\tau, \mathfrak{T}]\in \real$, we conclude that
$$
\int_\real \prt_t \tilde{p}_{t,\epsilon}(x,y)f(y)dy|_{(t_\eps,x_\eps)} \leq  0
$$
(the inequality may appear if $t_\eps=\mathfrak{T}$), and since $L$ possesses the positive maximum principle,
$$
L_x\int_\real \tilde{p}_{t,\epsilon}(x,y)f(y)dy|_{(t_\eps,x_\eps)}= \int_\real L_x\tilde{p}_{t,\epsilon}(x,y)f(y)dy|_{(t_\eps,x_\eps)} \geq 0.
$$
Thus,
\begin{equation}\label{max1}
\Big( L_x -\prt_t \Big)  \int_\real \tilde{p}_{t,\epsilon}(x,y)f(y)dy\big|_{(t_\eps,x_\eps)}\geq 0.
\end{equation}
On the other hand,  by (\ref{conv_loc}) we have
\begin{equation}\label{max2}
 \Big(L_x-\prt_t \Big) \int_\real  \tilde{p}_{t,\epsilon}(x,y)f(y)dy=\int_\real  q_{t,\epsilon}(x,y)f(y)dy -\theta/(2\mathfrak{T})\to -\theta/(2\mathfrak{T}), \quad \eps\to 0,
\end{equation}
uniformly w.r.t. $t\in [\tau, \mathfrak{T}]$ and $x$ in any compact set $K$. Taking $K$ equal to $K(\tau, \mathfrak{T},f)$, which contains  all $x_\eps$ with small $\eps$,  we get a contradiction with (\ref{max1}).
\end{proof}

\begin{proof}[Proof of  statements I and II of Theorem \ref{t-main2}]

\emph{I.}   It follows from \eqref{eq-norm} that
\begin{equation}\label{id}
\int_\real p_t(x,y)dy=1, \quad t>0, \quad x\in \real.
\end{equation}
Indeed, take $f\in C^2_\infty(\real)$ such that $f\equiv 1$ on the unit ball in $\real$, and put $f_k(x)=f(k^{-1}x)$. Then \eqref{id} follows by the dominated convergence theorem.

Thus, by \eqref{id}, positivity of $p_t(x,y)$ and Proposition~\ref{p-main2}, $(T_t)_{t\geq 0}$ is  the positive strongly continuous contraction conservative semigroup on $C_\infty(\real)$, where by  $T_0 $ we understand the identity operator.  By continuity of the kernel $p_t(x,y)$, see Proposition~\ref{p-main1},  the respective Markov process $X$ is strong Feller.

\medskip

\emph{II.}
 Using the Markov property of $X$, it is easy to deduce from (\ref{eq-norm}) and the  semigroup property for $p_t(x,y)$ the following: for given $f\in C_\infty^2(\real)$, $t_2>t_1$, and $x\in\real$, for any $m\geq 1$, $r_1, \dots ,r_m\in [0, t_1]$, and bounded measurable $V:(\real)^m\to \real$ the identity
$$
\Ee^x\left[f(X_{t_2})-f(X_{t_2})-\int_{t_1}^{t_2}h_f(X_s)\, ds\right]V(X_{r_1}, \dots, X_{r_2})=0
$$
holds true. This means that for every $f\in C_\infty^2(\real)$ the process
\begin{equation}\label{Mf}
M^f_t=f(X_t)-\int_{0}^{t}h_f(X_s)\, ds, \quad t\geq 0
\end{equation}
is a $\Pp^x$-martingale for every $x\in \real$; that is, $X$ is a solution to the martingale problem (\ref{mart}).
\end{proof}

\subsection{Generator of the semigroup $(T_t)_{t\geq0}$. }

\begin{proof}[Proof of Statement III of Theorem~\ref{t-main2}]
   In the first step is to show  that   the generator $A$  of the semigroup $(T_t)_{t\geq 0}$ is well defined on ${C}_\infty^2 (\real)$, and its restriction to this space  coincides with $L$. The argument here is quite standard: first note that since for $\mathfrak{f}\in {C}_\infty^2 (\real)$ the process (\ref{Mf}) is a $\Pp^x$-martingale for every $x\in \real$, and $h_{\mathfrak{f}}$ is continuous, by Doob's optional sampling theorem the  Dynkin operator $\mathcal{U}$  (cf. \cite[Chapter II.5]{GS74}) is well defined on such $\mathfrak{f}$, and $\mathcal{U}\mathfrak{f}=L\mathfrak{f}$. Since $\mathcal{U}\mathfrak{f}$ is continuous, by \cite[Theorem II.5.1]{GS74}  we get that $\mathfrak{f}$ belongs to the domain of the generator $A$, and $A\mathfrak{f}=\mathcal{U}\mathfrak{f}=L\mathfrak{f}$.
Hence $(L, C_\infty^2 (\real))$ is a restriction of $(A, \mathcal{D}(A))$. Since $A$ is a closed operator, this yields that $(L, C_\infty^2 (\real))$ is closable. Let us show that its closure coincides with whole  $(A, \mathcal{D}(A))$.

Take $\mathfrak{f}\in C_\infty(\real)\cap \mathcal{D}(A)$.
Fix $t>0$, and consider the element $\mathfrak{f}_{t}=T_{t}\mathfrak{f}$. Write
$$
\mathfrak{f}_{t,\eps}(x)=\int_{\real}p_{t,\eps}(x,y)\mathfrak{f}(y)\, dy,
$$
and observe that we have the following properties.

\begin{itemize}
  \item By statement 3 in  Lemma \ref{aux-e}, $\mathfrak{f}_{t,\eps}\in C^2_\infty$.
  \item Since $A$ is an extension of $L$, then we have that $\mathfrak{f}_{t,\eps}\in D(A)$, and  $A\mathfrak{f}_{t, \eps}=L \mathfrak{f}_{t,\eps}$.
  \item  By statement 4 in  Lemma \ref{aux-e},   one has $\mathfrak{f}_{t,\eps}\to \mathfrak{f}_t$ in $C_\infty$ as $\eps\to 0$.
   \item  By  statement 4 in  Lemma \ref{aux-e},  one has  $\prt_t\mathfrak{f}_{t,\eps}\to \prt_t\mathfrak{f}_t$  in $C_\infty$ as $\eps\to 0$.
    \item By   Lemma \ref{l5},  one has $(\prt_t-L)\mathfrak{f}_{t,\eps}\to 0$ in $C_\infty$ as $\eps\to 0$.
\end{itemize}

Recall that $\mathfrak{f}\in \mathcal{D}(A)$, and therefore $\prt_t \mathfrak{f}_t=A\mathfrak{f}_t.$ Hence, summarizing all the above we get that  $\mathfrak{f}_{t,\eps}\in C^2_\infty(\real)$ approximates $\mathfrak{f}_t$, and $L\mathfrak{f}_{t,\eps}$ approximates $A\mathfrak{f}_t$ in $C_\infty(\real)$ when $\eps\to 0$. This gives that the domain of the $C_\infty(\real)$-closure of $(L, {C}_\infty^2 (\real))$ contains every element of the form
$\mathfrak{f}_t=T_t\mathfrak{f}$,   $t>0$, $\mathfrak{f}\in \mathcal{D}(A)$.
This clearly yields that this closure coincides with whole $(A, \mathcal{D}(A))$.

\end{proof}

\subsection{Proof of Proposition~\ref{lower}}
 By \eqref{up-11} we have
\begin{equation}
\big|(p^0\star \Psi)_t(x,y)\big|\leq c_1 \rho_t t^{\delta}, \quad x,y\in \real,\quad  t\in (0,1],\label{ZF1}
\end{equation}
which implies the upper bound $p_t(x,x) \leq c \rho_t$ for all $x\in \real$ and $t\in (0,1]$. To get the lower bound,  observe that by \eqref{ZF1}  and Lemma~\ref{aux1} for $t$ small enough  we have
\begin{align*}
p_t(x,x)&\geq p^0_t(x,y)-\big|(p^0\star \Psi)_t(x,y)|
\geq p^0_t(x,y) - c_1 \rho_t t^{\delta}
\geq c_2 \rho_t f_{low}(|y-x|\rho_t).
\end{align*}
In particular,  $p_t(x,x)\asymp \rho_t$ for all $x\in \real$,  $t\in (0,1]$. 
\qed

\section{Examples}\label{example}

\begin{example}\label{exa1}
 Consider a symmetric $\alpha$-stable process. In this case the associated  L\'evy measure $\mu$ is
 of the form $\mu(du)= c_\alpha|u|^{-1-\alpha}du$, $0<\alpha<2$, where $c_\alpha>0$ is some
appropriate constant, and the respective characteristic exponent $q(\xi)$ satisfies condition
 \textbf{A1} for any $0<\alpha<2$. Suppose that   conditions \textbf{A2} and \textbf{A3}
 are satisfied. By  Case II of Theorem~\ref{t-jump3} with $h(x)= 1\wedge |x|^{-1-\alpha}$ we have
\begin{equation}
\begin{split}
 p_t(x,y)&\leq c t^{-1/\alpha}\big( 1+ h(|x-y|t^{-1/\alpha})+ t^{\epsilon/\alpha} h_\epsilon(|x-y|t^{-1/\alpha})\big)
 \\&=
  \frac{c}{t^{1/\alpha}}\left( 1 + \Big\{\Big(\frac{t^{1/\alpha}}{|x-y|}\Big)^{1+\alpha}
+  t^{\epsilon/\alpha} \Big(\frac{t^{1/\alpha}}{|x-y|}\Big)^{1+\alpha-\epsilon}\Big\}\I_{|x-y|\geq t^{1/\alpha}}\right),\label{al-1}
 \end{split}
 \end{equation}
 for all $t\in (0,1]$, $x,y\in \real$.
\end{example}

\begin{example}\label{exa2}
Consider  a L\'evy process with the discrete L\'evy measure
$$
\mu(dy)=\sum_{k=-\infty}^\infty 2^{k\theta}\Big(\delta_{2^{-k\upsilon}}(dy)+\delta_{-2^{-k\upsilon}}(dy)\Big),
$$
where  $\upsilon>0$, and $0<\theta<2\upsilon$. This L\'evy measure was studied in detail in \cite{KK12a},
where we show that the respective characteristic exponent  satisfies $q(\xi) \asymp |\xi|^\alpha$ with
$\alpha=\theta/\upsilon$. In particular, condition \textbf{A1} is satisfied, and
$\rho_t\asymp t^{-1/\alpha}$.  One can check  that for $\alpha>1$ condition \eqref{dens02} holds true
with $1-\mathfrak{G}(x)= x^{-\alpha}\I_{x\geq 1}$, see \cite{KK12a}.  Suppose that conditions \textbf{A2} and
\textbf{A3} are satisfied.  Then,  by Case I of  Theorem~\ref{t-jump3} we have
\begin{equation}\label{exa-22}
\begin{split}
 p_t(x,y)&\leq  ct^{-1/\alpha}\big( 1+ h(|x-y|t^{-1/\alpha})+ t^{\epsilon/\alpha} h_\epsilon(|x-y|t^{-1/\alpha})\big)
 \\&
  =\frac{c}{t^{1/\alpha}}\left( 1 + \Big\{\Big(\frac{t^{1/\alpha}}{|x-y|}\Big)^{\alpha}
+ t^{\epsilon/\alpha}\Big(\frac{t^{1/\alpha}}{|x-y|}\Big)^{\alpha-\epsilon}\Big\}\I_{|x-y|\geq t^{1/\alpha}}\right),
 \end{split}
 \end{equation}
 for all $t\in (0,1]$, $x,y\in \real$.
\end{example}

\begin{example}\label{exa3}
Consider the function $\alpha: [0,\infty)\to [\alpha_-,\alpha_+]\subset (0,2)$ such that
    $    v\alpha'(v)\to 0$ as $v\to \infty$. 
It was shown in \cite{KK12a} that there exist a L\'evy measure $\mu$ and the respective characteristic exponent $q(\xi)$, such that
    \begin{equation}
    q^U(\xi)\asymp q(\xi)\asymp q^L(\xi)\asymp |\xi|^{\alpha(\ln |\xi|)}, \quad |\xi|\to\infty.\label{exa-31}
    \end{equation}
In particular, condition \textbf{A1} is satisfied. On the other hand, for any $\alpha\in [\alpha_-,\alpha_+]$   there exists a sequence $\{\xi_{\alpha,k}$, $k\geq 1\}$, such that
    $$
    \xi_{\alpha,k}\to \infty, \quad q(\xi_{\alpha,k})\asymp (\xi_{\alpha,k})^\alpha, \quad k\to\infty.
    $$
This example illustrates that despite of the fact that the characteristic exponent has oscillations, there exists the fundamental solution to \eqref{fund1}, provided that the function $m(x,u)$, which is responsible for the perturbations of $\mu$, satisfies the conditions of  Theorem ~\ref{t-main1}, which is the transition probability density of a Feller Markov process.
\end{example}

\section{Appendix A}

\begin{proof}[Proof of Lemma~\ref{p0-der}]  We follow the ideas presented in \cite[Section 3.2]{KK12a}.

By \textbf{A1}, we can bring the derivatives $\partial_t \partial^k_x $ inside the integral in the representation of $p_t(x)$. Split
\begin{align*}
\partial_t\partial_x^k  p_t(x) &= - \int_\real (-i\xi)^k q(\xi)e^{-i\xi x-tq(\xi)}d\xi\\
&= -  \int_\real (-i\xi)^k q_t(\xi)e^{-i\xi x-tq(\xi)}d\xi -  \int_\real (-i\xi)^k \big(q(\xi)-q_t(\xi)\big)e^{-i\xi x-tq(\xi)}d\xi\\
&= -\big(\partial^k \tilde{p}_t* P_t\big)(x)-t^{-1}\big( \partial^k \overline{p}_t* P_t* \Lambda_t\big)(x)\\
&=I_1(t,x)+I_2(t,x),
\end{align*}
where
$$
q_t(\xi):= \int_{|u\rho_t|\leq 1} (1-\cos(\xi u))\mu(du),
$$
$$
\tilde{p}_t(x):= \mathcal{F}^{-1} \big( q_t (\cdot) e^{-tq_t(\cdot)}\big)(x), \quad \overline{p}_t (x): =  \mathcal{F}^{-1} \big(  e^{-tq_t(\cdot)}\big)(x),
$$
and
$$P_t(dw)=\mathcal{F}^{-1}\Big(e^{-t(q(\cdot)- q_t(\cdot))}\Big)(dw),
$$
which coincides with the definition of $P_t(dw)$ given in  \eqref{Poist}.
The estimate for $\overline{p}_{t,k} (x)$ was obtained in \cite{KK12a}:
$$
\partial^k_x \overline{p}_t(x)\leq \rho_t^{k} g_t(x),
$$
where $g_t$ is of the from \eqref{gtc} with some constant $c>0$.
Therefore,
$$
I_2(t,x)\leq  t^{-1} \rho_t^k \big(g_{t}* P_t* \Lambda_t\big)(x), \quad t\in (0,1], \quad x\in \real.
$$
where
Note that $\big(\Lambda_t * P_t\big)(\real)\leq C $ for all $t\in [0,1]$.

Let us estimate $\tilde{p}_t(x)$.
Note that the function $q_t(\xi)$ can be extended to the complex plane with respect to $\xi$, and
$$
q_t(y+ i \eta)= \int_{|u\rho_t|\leq 1} (1-\cosh (\eta u) \cos (y u))\mu(du).
$$
Applying the Cauchy theorem, we have (see \cite{KK12a} for details)
\begin{align*}
I_1(t,x)= (2\pi)^{-1} \int_\real  (-iy+\eta)^k q_t(y+i\eta) e^{\eta x-i xy -tq_t (y+i\eta)}dy.
\end{align*}
Observe also, that $q_t(y+i\eta)$ is real-valued, and $q_t(y+ i \eta)\geq q_t(y)+q_t(i\eta)$.
Note that
$$
tq_t(y) \geq t  q(y) -t \int_{|u\rho_t|>1} (1-\cos (y u))\mu(du)\geq tq(y)- c_1.
$$
Since the L\'evy measure $\mu$ is symmetric, we have
\begin{align*}
-q_t(i\eta)&= t\int_{|\rho_t u|<1}[\cosh(\eta u)-1]\mu(du)=t\int_{|\rho_t u|<1}(\eta u)^2\vartheta(\eta u)\mu(du)
\\&
\leq t\vartheta(\eta/\rho_t)\int_{|\rho_t u|<1}(\eta u)^2\mu(du)
=t(\eta/\rho_t)^2
\vartheta(\eta/\rho_t)q^L(\rho_t)
\\&
=\cosh(\eta/\rho_t)-1,
\end{align*}
where $\vartheta(x)=x^{-2}[\cosh x-1]$, and we used that $\vartheta$ is even and strictly increasing on $(0,\infty)$.
 In such a way,
\begin{align*}
I_1(t,x)&\leq c_2  e^{\eta x+ \cosh (\eta /\rho_t)} \cosh (\eta/\rho_t) \int_\real (|\eta|+|y|)^kq_t (y)  e^{-tq(y)} dy\\
&\leq 2c_2  t^{-1} e^{\eta x+ 2\cosh (\eta /\rho_t)} \int_\real (|\eta|+|y|)^k  e^{-2^{-1}tq(y)} dy,
\end{align*}
where in the last line we used the obvious inequality $q_t(\xi)\leq q(\xi)$. The expression in the last line can be estimated from above in the same way as in \cite[Lemma 3.6]{KK12a}:
$$
I_1(t,x)\leq c_2 t^{-1} \rho_t^k \big(g_{t,\theta} * P_t\big)(x),
$$
where $g_t$ is the same as above in the proof, and $\theta\in (0,1)$.
Summarizing the estimates for $I_1(t,x)$ and $I_2(t,x)$, we derive the statement of Lemma~\ref{p0-der}.
\end{proof}

\section{Appendix B}

In this Appendix we formulate the result on the existence of the fundamental solution to \eqref{fund1} and its properties, under the assumption that the operator $L(x,D)$ is the sum of a L\'evy generator and a \emph{bounded} perturbation.

Let $\mu_\epsilon(du):= (|u|^\epsilon\wedge 1)\mu(du)$,
$$
\mathcal{R}^{(1)}_t(du)= P_t(du)+(P_t* \mu_\epsilon)(du),
$$
\begin{equation}\label{uk-new}
\mathcal{R}^{(k+1)}_t(dw)=\tfrac{1}{ B(k(1-\delta),1-\delta)}\int_0^1 \int_\real (1-r)^{k-1-k\delta} r^{-\delta}  \mathcal{R}_{t(1-r)}^{(k)}(dw-u) \mathcal{R}^{(1)}_{tr}(du)dr, \quad k\geq 1.
\end{equation}
For $A>0$ put
\begin{equation}\label{Q1-til}
\mathcal{R}_{t}(dw):= \sum_{k=1}^\infty A^k \mathcal{R}_{t}^{(k)}(dw),\quad 
\mathcal{Q}_{t} (dw):=c\big(P_t(dw)+ \big(P_t \star \mathcal{R}_{t}\big)(dw)\big),
\end{equation}
where $c>0$ is the normalizing constant.

\begin{theorem}\label{t-jump2}
Let the operator $L(x,D)$ be given by \eqref{lxd0}. Assume that conditions \textbf{A1}--\textbf{A3} are satisfied, but
 $\epsilon>0$ in condition \textbf{A3} is such that \eqref{inf1} fails. Then statements of Theorems~\ref{t-main1}, \ref{t-main2}, Propositions~\ref{p-main1}, \ref{p-main2} and \ref{lower} remain valid, with  the only modification:
$$
p_t(x,y)\leq \rho_t \Big(f_{up}(\rho_t \cdot ) * \mathcal{Q}_t\Big)(y-x), \quad t\in (0,1], \quad x,y\in \real.
$$
where the probability measure $\mathcal{Q}_t(dw)$ is given by \eqref{Q1-til}.
\end{theorem}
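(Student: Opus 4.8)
The plan is to reduce Theorem~\ref{t-jump2} to the already-established results by observing that when \eqref{inf1} fails, the measure $\mu_\epsilon(du) = (|u|^\epsilon\wedge 1)\mu(du)$ is \emph{finite}, so the perturbation operator $\mathfrak{L}(x,D)$ is a bounded operator on $B_b(\real)$. First I would revisit the estimation of $\Phi_t(x,y) = \mathfrak{L}(x,D)p_t^0(x,y)$ carried out in Lemma~\ref{Phi-up}. The term $J_1$, coming from the region $|\rho_t u|\le 1$, is treated exactly as before, using the Taylor expansion to second order and the derivative bound \eqref{ptx-der}; this still produces a contribution bounded by $Ct^{-1+\eta}(g_{t,\vartheta}*P_t)(y-x)$. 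The term $J_2$, however, is now simpler: since $\int(|u|^\epsilon\wedge 1)\mu(du)<\infty$, we get directly
\[
|J_2| \le c\int_\real p_t^0(x+u,y)(|u|^\epsilon\wedge 1)\mu(du) + c\,p_t^0(x,y)\,\mu_\epsilon(\real) \le C\big(g_t * (P_t*\mu_\epsilon + P_t)\big)(y-x),
\]
with \emph{no} negative power of $t$ in front. So instead of \eqref{ph1} we obtain a bound of the form $|\Phi_t(x,y)| \le C t^{-1+\delta}(g_t * \mathfrak{G}_t)(y-x)$ where $\mathfrak{G}_t(du) := c_0(P_t(du) + (P_t*\mu_\epsilon)(du))$ is a probability measure and $\delta$ can in fact be taken to be $\eta = \epsilon/\sigma$ (the gain in $J_2$ only helps); one keeps $t^{-1+\delta}$ merely to run the same convolution machinery.

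Next I would re-run the generic calculation of Section~\ref{gen} with this improved $\Phi$-estimate. The convolution lemma (Lemma~\ref{con1}.i)) and the inductive scheme of Lemma~\ref{gen}.a) apply verbatim, since they only use the structure ``$t^{-1+\delta}$ times (single kernel $*$ probability measure)''. This yields $|\Phi_t^{\star k}(x,y)| \le C_k t^{-1+k\delta}(g_t^{(k)} * \mathfrak{G}_t^{(k)})(y-x)$, where $\mathfrak{G}_t^{(k)}$ is built from $\mathfrak{G}_t$ by the same recursion \eqref{gk-new}; the stabilization trick with $k_0 = [n/(\alpha\delta)]+1$ from Lemma~\ref{es-new} then forces the constants $D_\ell$ to be summable, giving convergence of $\Psi_t = \sum_k \Phi_t^{\star k}$ and the bound $|\Psi_t(x,y)| \le C_0 t^{-1+\delta}(g_{t,\zeta}*\mathcal{R}_t)(y-x)$ with $\mathcal{R}_t = \sum_k A^k\mathcal{R}_t^{(k)}$ as in \eqref{Q1-til}. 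Convolving with $p_t^0$ and using Lemma~\ref{aux1} gives $p_t(x,y) \le \rho_t(f_{up}(\rho_t\,\cdot)*\mathcal{Q}_t)(y-x)$ with $\mathcal{Q}_t$ from \eqref{Q1-til}; note the powers in the recursion \eqref{uk-new} (exponent $k-1-k\delta$ rather than $-1+k\delta$) simply reflect the different bookkeeping of the Beta-function factors when one does not need to extract a negative power of $t$, and I would check this indexing carefully when writing out the induction.

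For the remaining assertions (continuity, $\partial_t$-smoothness, Propositions~\ref{p-main1}--\ref{p-main2}, the justification procedure of Theorem~\ref{t-main2}, and Proposition~\ref{lower}), essentially nothing changes: every one of those proofs uses only (a) the single-kernel-times-probability-measure form of the bounds on $p_t^0$, $\partial_t p_t^0$, $\Phi_t$, $\partial_t\Phi_t$, $\Psi_t$, $\partial_t\Psi_t$, and (b) the positive maximum principle for $L(x,D)$, which is unaffected by the size of $\mu_\epsilon$. The differentiated estimates come from Lemma~\ref{p0-der} combined with the same $J_1/J_2$ split applied to $\partial_t\partial_x^2 p_t^0$, again with $J_2$ now bounded by a \emph{finite} integral; the inductive scheme for $\partial_t\Phi_t^{\star k}$ in Lemma~\ref{phi-der} goes through identically. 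I would therefore state that the proofs of Theorems~\ref{t-main1}, \ref{t-main2}, Propositions~\ref{p-main1}, \ref{p-main2}, \ref{lower} transfer \emph{mutatis mutandis}, with the single replacement of the kernel $Q_t$ (resp.\ $\tilde Q_t$, $G_t$) by $\mathcal{Q}_t$ (resp.\ its analogue built on $\mu_\epsilon$ in place of $\chi_{t,\epsilon}$), and only write in detail the modified estimate on $\Phi_t$ and the verification that $\mathcal{R}_t(\real)<\infty$. The main obstacle, such as it is, will be purely bookkeeping: making sure the Beta-function constants in \eqref{uk-new} and the choice of the auxiliary sequence $(\theta_k)$ are compatible so that $\sum_\ell D_\ell < \infty$, and confirming that $\mu_\epsilon$ being finite (rather than merely $\int(|u|^\epsilon\wedge1)\mu(du)<\infty$ on a scaled version) is exactly what makes the ``$t^{-1}$'' disappear from the $J_2$ term — heuristically, the total intensity of the perturbation is finite, as noted in Remark~\ref{rem10}.a).
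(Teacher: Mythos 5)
Your proposal is correct in substance but takes a noticeably heavier route than the paper on the one step that actually changes. The paper does not re-run the $J_1/J_2$ split of Lemma~\ref{Phi-up} at all: since the failure of \eqref{inf1} means $\mu_\epsilon(du)=(|u|^\epsilon\wedge 1)\mu(du)$ is a \emph{finite} measure, it simply applies the triangle inequality $|p_t^0(x+u,y)-p_t^0(x,y)|\leq p_t^0(x+u,y)+p_t^0(x,y)$ under the integral and obtains
$$
|\Phi_t(x,y)|\leq C\rho_t\Big(f_{up}(\cdot\,\rho_t)*\big(P_t+P_t*\mu_\epsilon\big)\Big)(y-x),
$$
with \emph{no} singularity in $t$ whatsoever; the rest of the argument is then declared identical to that of Theorem~\ref{t-main1}. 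This is where your reading of the recursion \eqref{uk-new} goes slightly astray: the exponents $k-1-k\delta$ are not just alternative Beta-function bookkeeping, they encode $|\Phi_t^{\star k}|\lesssim t^{k-1-k\delta}$, which follows from the non-singular bound $|\Phi_t|\lesssim t^{-\delta}\tilde g_t*\mathcal{R}_t^{(1)}$ and is strictly better than the $t^{-1+k\delta}$ scale of \eqref{gk-new}. Your version, which keeps the Taylor-expansion estimate of $J_1$ and hence a factor $t^{-1+\eta}$, still converges and still yields a compound-kernel bound of the same shape, but the limiting family of measures it produces is the one built by \eqref{gk-new} from $P_t+P_t*\mu_\epsilon$, not the measure $\mathcal{Q}_t$ of \eqref{Q1-til} that the theorem asserts. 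So as written your argument proves an equivalent estimate with a different (though structurally analogous) $\mathcal{Q}_t$; to land on the stated one you should drop the $J_1$ analysis entirely and use the finiteness of $\mu_\epsilon$ on the whole integral, which is also the point of Remark~\ref{rem10}.a). Everything you say about transferring the continuity, smoothness and justification arguments agrees with the paper.
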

\begin{proof}   The only difference of the proof from those  of Theorem~\ref{t-main1}  is in the estimate obtained in Lemma~\ref{Phi-up}. In this case for $\epsilon>0$ such that \eqref{inf1} fails  we get
 \begin{align*}
 \big|\Phi_t(x,y)\big|&\leq C \int_\real  \big(p_{t,y}(x+u)+p_t(x)\big)(|u|^\epsilon\wedge 1)  \mu(du)
 \\&
 =C\rho_t \Big(f_{up}(\cdot \rho_t) *\big( P_t+ P_t * \mu_\epsilon\big)\Big) (x)\\
 &= C  t^{-\delta} \big( \tilde{g}_t * \mathcal{R}^{(1)}_t)(y-x).
 \end{align*}
where as before $\tilde{g}_t(x)=t^\delta g_t(x)$, and $\delta\in (0,1)$ is arbitrary. The rest of the proof can be conducted in the same way as the the case of Theorem~\ref{t-main1}.
\end{proof}

\section{Appendix C}

\begin{proof}[Proof of  Lemma~\ref{con1}]
i)    Note that for $s<\frac{t}{2}$ we have $\rho_{t-s}\leq \rho_{t/2}$, implying
 \begin{align*}
    \int_\real
   \rho_{t-s}\rho_s f_{up}((x-z) \rho_{t-s}) f_{up} ((z-y)\rho_s)dz
    &
    \leq \rho_{t/2} \int_\real \rho_s  f_{up}((z-y)\rho_s)dz
    \leq  c_1 \rho_{t/2}
    \leq c_2\rho_{t},
    \end{align*}
where in the last inequality we used that $\rho_t\asymp \rho_{ct}$ for any $c>0$, $t\in (0,1]$, which is implied by  $q^U(\xi)\leq q^U(c\xi)\leq c^2 q^U(\xi)$ for any $c\geq 1$, $\xi\in\real$.  Analogously, for $s>\frac{t}{2}$
    \begin{align*}
   & \int_\real
    \rho_{t-s}\rho_s f_{up}((x-z) \rho_{t-s}) f_{up}((z-y)\rho_s)dz
    \leq c_3\rho_t.
    \end{align*}
Let $D(x):= |x|\ln (1+|x|)$. Since  $\rho_t$ is monotone increasing as $t\to 0$, we have by convexity of $D$ the inequalities
    \begin{align*}
   D(|x-z|\rho_{t-s}) +D(|z-y|\rho_s) &\geq 2D(2^{-1}|x-y| \rho_t)
   \geq D(|x-y|\rho_t) -c,
    \end{align*}
where $c>0$ is some constant. Then for any $\vartheta  \in (0,1)$ we  have
    \begin{equation}\label{g2}
    \begin{split}
    I(t,x,y)&\leq  d_1^2  e^{-d_2\vartheta \{ D(|x-y|\rho_t)-c\}}
        \\&
    \quad \cdot \int_\real
   \rho_{t-s} \rho_s \exp\Big[-(1-\vartheta)d_2 \big\{D(|x-z|\rho_{t-s}) + D(|z-y|\rho_s)\big\} \Big]dz
     \\&
     \leq C (1-\vartheta)^{-1}  \rho_t f_{up}^{\vartheta} ((y-x)\rho_t)
     =C (1-\vartheta)^{-1}  g_{t,\vartheta}(y-x),
    \end{split}
    \end{equation}
where   $C>0$ is some constant.

ii)  By assumption a) of Theorem~\ref{t-jump3}.II we have $h(Cx)\leq  h(x)$ for any $C,x\geq 1$, which implies
\begin{equation}
\begin{cases}
h_\epsilon (|x-z|\rho_{t-s})\leq h_\epsilon(|x|\rho_{t-s}/2), & \text{ when $|x-z|\geq |x|/2$,}
\\
h_\epsilon (|z|\rho_s)\leq h_\epsilon(|x|\rho_s/2), & \text{when $|x-z|\leq |x|/2$}.
\end{cases}
\end{equation}
where for the second relation we used that $|x-z|\leq |x|/2$ implies $|x|/2\leq |z|$. Take $B_1:= \{ z:\,\,|x-z|\geq |x|/2\}$, $B_2:= \{z:\,\,|x-z|\leq |x|/2\}$. We have
\begin{align*}
&\int_{B_1} h_{t-s,\epsilon}(x-z) h_{s,\epsilon}(z)\, dz \leq h_{t-s,\epsilon}(x/2) \int_\real  h_{s,\epsilon} (z)dz
\leq c_1 h_{t,\epsilon} (x/2)
\leq c_2 h_{t,\epsilon} (x), \\
&\int_{B_2} h_{t-s,\epsilon}(x-z) h_{s,\epsilon}(z)\, dz \leq   h_{s,\epsilon}(x/2)  \int_\real h_{t-s,\epsilon}(x-z) dz
\leq  c_1h_{t,\epsilon}(x/2)
\leq c_2 h_{t,\epsilon}(x),
\end{align*}
where in the last inequalities we used subsequently that $\rho_s$ increases monotone as $s\to 0$, $h_\epsilon(Cx)\leq C^{-1}  h_\epsilon (x)$ for any $C,\,x\geq 1$, and $h_\epsilon(x/2)\leq c h_\epsilon(x)$ for all $x\geq 1$. Thus, summarizing the above estimates,   we obtain \eqref{h-es}.
\end{proof}
\textbf{Acknowledgement.}  The authors thank N. Jacob and  R. Schilling for inspiring remarks and suggestions, and gratefully acknowledge the DFG Grant Schi~419/8-1.  The first-names author gratefully acknowledges  the Scholarship of the President of Ukraine for young scientists (2012-2014).

\begin{thebibliography}{100}

\bibitem[BBCK09]{BBCK09} M.\ T.\ Barlow, R.\ B.\ Bass, Zh.-Q.\ Chen, M.\ Kassmann.
Non-local Dirichlet forms and symmetric jump processes. \emph{Trans.
Amer. Math. Soc.} \textbf{361}  (2009), 1963--1999.

\bibitem[BGK09]{BGK09}    M.\ T.\ Barlow, A.\ Grigor'yan, T.\ Kumagai.
Heat kernel upper bounds for jump processes and the first exit time.
\emph{J. Reine Angew. Math.} \textbf{626} (2009), 135--157.




\bibitem[BJ07]{BJ07} K.\ Bogdan, T.\ Jakubowski. Estimates of heat kernel of fractional Laplacian perturbed by gradient operators. \emph{Comm. Math. Phys.} \textbf{271(1)} (2007), 179--198.





\bibitem[B\"o05]{B\"o05} B.\ B\"ottcher. A parametrix construction for the fundamental solution of the evolution equation associated with a pseudo-differential operator generating a Markov process.
\emph{Math. Nachr.} \textbf{278} (2005), 1235--1241.

\bibitem[B\"o08]{B\"o08} B.\ B\"ottcher.
Construction of time inhomogeneous Markov processes via evolution equations using pseudo-differential operators.
\emph{J. London Math. Soc.} \textbf{78} (2008), 605--621.


\bibitem[CKS87]{CKS87} E.\ A.\ Carlen,  S.\  Kusuoka, D.\ W.\ Stroock. Upper bounds for
symmetric Markov transition functions. \emph{Ann. Inst. Poincar\'e}. \textbf{2} (1987), 245--287.

\bibitem[CKK08]{CKK08}  Z.-Q.\ Chen,  P.\ Kim,  T.\ Kumagai. Weighted Poincar\'e inequality and heat kernel estimates for finite range jump processes.
\emph{Math. Ann.} \textbf{342(4)} (2008),  833--883.

\bibitem[CKK10]{CKK10}   Z.-Q.\ Chen,  P.\ Kim,  T.\ Kumagai.
Global heat kernel estimates for symmetric jump processes.
\emph{Trans. Amer. Math. Soc.} \textbf{ 363} (2011), 5021--5055.


\bibitem[CK08]{CK08} Z.-Q.\ Chen,   T.\ Kumagai. Heat kernel
estimates for jump processes of mixed types on metric measure spaces.
 \emph{Probab. Th. Rel. Fields} \textbf{140(1-2)} (2008), 277--317.


\bibitem[CW13]{CW13} Z.-Q.Chen, L. Wang. Uniqueness of stable processes with drift.   Preprint 2013. Available at \href{http://arxiv.org/pdf/1309.6414v1.pdf}{http://arxiv.org/pdf/1309.6414v1.pdf}



\bibitem[CZ13]{CZ13} Z.-Q. Chen, X. Zhang.  Heat kernels and analyticity of non-symmetric jump diffusion semigroups
Available at \href{http://arxiv.org/abs/1306.5015}{http://arxiv.org/abs/1306.5015}

\bibitem[DF13]{DF13}   A. Debussche, N. Fournier. Existence of densities for stable-like driven SDE's with H\"older continuous coefficients. \emph{J. Funct. Anal.} \textbf{264 (8)} (2013),  1757--1778.



\bibitem[Dr77]{Dr77}    Ja.\ M. Drin'.   Fundamental solution of the Cauchy problem for a class of parabolic pseudodifferential equations. (Ukrainian)
\emph{Dopov. Akad. Nauk Ukr. RSR, Ser. A}  (1977), 198--203.


\bibitem[DE81]{DE81}  Ja.\ M.\ Drin',  S.\ D.\ Eidelman, Construction and investigation of classical fundamental
solution of the Cauchy problem for uniformly parabolic pseudo-differential
equations.  (Russian)  \emph{ Mat. Issled.}  \textbf{63} (1981), 18--33.




\bibitem[EIK04]{EIK04}  S. D. Eidelman, S. D. Ivasyshen and A. N. Kochubei. \emph{Analytic Methods in the Theory of Differential and Pseudo-Differential Equations of Parabolic Type. } Birkhäuser, Basel, 2004.



\bibitem[EGV79]{EGV79} P.\ Embrechts, Ch.\ M.\ Goldie, N.\ Veraverbeke. Subexponentiality and Infinite Divisibility. \emph{ Z. Wahrscheinlichkeitstheorie verw. Gebiete}.
\textbf{49} (1979), 335--347.

\bibitem[FP10]{FP10} N. Fournier, J. Printems. Absolute continuity of some one-dimensional processes. \emph{Bernoulli} \textbf{16(2)} (2010), 343--360.


\bibitem[Fr64]{Fr64} A.\ Friedman. \emph{Partial differential equations of parabolic type.} Prentice-Hall,  New-York, 1964.

\bibitem[GS74]{GS74} I.\ I.\ Gikhman, A.\ V.\ Skorokhod. \emph{The theory of stochastic processes, Vol. II}. Springer, Berlin, 1974.

\bibitem[Ho98a]{Ho98a} W.\ Hoh. \emph{Pseudo differential operators generating Markov processes.}
Habilitationsschrift, Bielefeld 1998.


\bibitem[Ho98b]{Ho98b} W.\ Hoh. A symbolic calculus for pseudo differential operators generating
Feller semigroups. \emph{Osaka Math.
J. }  \textbf{35} (1998) 789--820.




\bibitem[Iw77]{Iw77}  Ch. \ Iwasaki (Tsutsumi). The fundamental solution for pseudo-differential
operators of parabolic type. \emph{Osaka Math.
J. } \textbf{14(3)} (1977) 569--592.

\bibitem[Ja01]{Ja01} N.\ Jacob. \emph{Pseudo differential operators and Markov processes, I: Fourier analysis and Semigroups}. Imperial College Press, London, 2001.


\bibitem[Ja02]{Ja02} N.\ Jacob. \emph{Pseudo differential operators and Markov processes, II: Generators and their potential
theory.} Imperial College Press, London, 2002.

\bibitem[Ja05]{Ja05} N.\ Jacob. \emph{Pseudo differential operators and Markov processes, III: Markov Processes and Applications.} Imperial College Press, London, 2005.



\bibitem[KS14]{KS14} P.\ Kim, R.\ Song.  Stable process with singular drift. Preprint 2014. \emph{Stoch. Proc.  Appl.} \textbf{124(7)} (2014),  2479--2516.


\bibitem[Kl89]{Kl89} C. \ Kl\"uppelberg. Subexponential Distributions and
Characterizations of Related Classes. \emph{Probab. Th. Rel. Fields.} \textbf{82} (1989), 259--269.



\bibitem[KK12a]{KK12a}  V.\ Knopova, A.\ Kulik. Intrinsic small time estimates for distribution densities of L\'evy processes.  \emph{Random Op. Stoch. Eq.} \textbf{21(4)}  (2013), 321--344.


\bibitem[K13]{K13} V. Knopova.  Compound kernel estimates for the transition probability density of a L\'evy process in $\real$. Preprint 2013. Available at \href{http://arxiv.org/pdf/1310.7081.pdf}{http://arxiv.org/pdf/1310.7081.pdf}



\bibitem[KK14a]{KK14a}   V.\ Knopova, A.\ Kulik.  The parametrix method and the weak solution to an SDE driven by an $\alpha$-stable noise. Preprint 2014.


\bibitem[KK14b]{KK14b}  V.\ Knopova, A.\ Kulik.  Intrinsic compound kernel estimates for the  transition probability density of a
L\'evy type processes and their applications. Preprint 2014.


\bibitem[Ko89]{Ko89} A.\ N.\ Kochubei. Parabolic pseudo-differential equations, hypersingular integrals and Markov processes. \emph{Math. URSS Izestija.} \textbf{33} (1989) 233--259.

\bibitem[Ko00]{Ko00}  V.\ N.\ Kolokoltsov. Symmetric Stable Laws and Stable-like Jump-Diffusions. \emph{Proc.
London Math. Soc.} \textbf{80} (2000), 725-768.


\bibitem[Ku81]{Ku81} H.\ Kumano-go.  \emph{Pseudo-differential operators}. MIT Press, Cambridge, Mass., 1981.





\bibitem[Le07]{Le07} E.\ E.\ Levi. Sulle equazioni lineari totalmente ellittiche alle derivate parziali. \emph{Rend. del. Circ. Mat. Palermo}, \textbf{24} (1907), 275--317.


\bibitem[Po94]{Po94} N.\ I.\ Portenko. Some perturbations of drift-type for symmetric stable processes. \emph{Random Oper. Stoch. Equ.} \textbf{ 2(3)} (1994),  211--224.

\bibitem[Po95]{Po95} S.\ I. \ Podolynny, N.\ I.\ Portenko. On multidimensional stable processes with locally unbounded drift. \emph{Random Oper. Stoch. Equ. } \textbf{3(2)} (1995),  113--124.




\bibitem[Ts74]{Ts74} Ch.\ Tsutsumi. The fundamental solution for a degenerate parabolic
pseudo-differential operator. \emph{Proc. Japan Acad.} \textbf{50} (1974), 11--15.

\end{thebibliography}
\end{document}